\theoremstyle{plain}
\newtheorem{theorem}{Theorem}[section]
\theoremstyle{plain}
\newtheorem{corollary}[theorem]{Corollary}
\theoremstyle{plain}
\newtheorem{lemma}[theorem]{Lemma}
\theoremstyle{plain}
\newtheorem{proposition}[theorem]{Proposition}
\theoremstyle{definition}
\newtheorem{definition}[theorem]{Definition}
\theoremstyle{definition}
\newtheorem{example}[theorem]{Example}
\theoremstyle{remark}
\newtheorem{remark}[theorem]{Remark}
\begin{document}

\title{Models for rational $(\infty, 1)$-categories}
\author{Eleftherios Chatzitheodoridis}
\address{Department of Mathematics, University of Virginia, Charlottesville, VA, USA}
\email{thp5uc@virginia.edu}
\subjclass{55U35 (Primary), 18N60, 55P62, 18N55, 55P60, 18N50, 55U10, 18N40 (Secondary).}
\keywords{$(\infty, 1)$-categories, rational homotopy theory, model categories, complete Segal spaces, Segal categories.}
\maketitle

\begin{abstract}
We introduce rational $(\infty, 1)$-categories, which are $(\infty, 1)$-categories enriched in spaces whose higher homotopy groups are rational vector spaces. We provide two models for rational $(\infty, 1)$-categories, rational complete Segal spaces and rational Segal categories, and we show that they are equivalent.
\end{abstract}

\section{Introduction}\label{section1}
The study of $(\infty, 1)$-categories is intended to understand categories with a notion of homotopy between their morphisms. Namely, an $(\infty, 1)$-category is a category enriched in spaces, possibly weakly. This enrichment yields $k$-morphisms for all $k$ given by homotopies. However, all morphisms in dimension higher than $1$ are invertible up to homotopy because homotopies can be traveled in reverse.

Our understanding of $(\infty, 1)$-categories has been advanced thanks to the development of various models for $(\infty, 1)$-categories, that is, mathematical objects that exhibit the structure of an $(\infty, 1)$-category. More specifically, Quillen's concept of a model category from \cite{Quillen1967} constitutes a natural and convenient framework for the purpose of modeling $(\infty, 1)$-categories. Conceptually, a model structure on a category packages a homotopy theory on that category, for it allows us to define homotopies, function complexes, and an associated homotopy category. In this framework, one not only introduces mathematical objects that encode the structure of an $(\infty, 1)$-category, but also develops model categories that describe the homotopy theory of $(\infty, 1)$-categories.

There is a wide variety of equivalent models for $(\infty, 1)$-categories; a treatment of those models and their equivalence is provided by Bergner in \cite{Bergner2018}. In this paper, we use two such models, both of which are particular kinds of simplicial spaces: complete Segal spaces and Segal categories. Complete Segal spaces, as defined by Rezk in \cite{Rezk2001}, are simplicial spaces satisfying a condition that encodes morphism composition up to homotopy. Rezk's work yields a cartesian model category whose fibrant objects are precisely the complete Segal spaces; this model category is produced as a left Bousfield localization of the Reedy model structure on simplicial spaces.

Segal categories are also simplicial spaces that satisfy the same condition and have a discrete space of $0$-simplices; the discreteness condition accounts for the fact that we encode a set of objects. Segal categories were introduced by Dwyer, Kan, and Smith in \cite{DKS1989}, albeit under a different name, and they were developed by Hirschowitz and Simpson in \cite{HS2001}, as well as by Pellissier in \cite{Pellissier2003} from the homotopical perspective. In \cite{Bergner2007}, Bergner develops two equivalent model categories for the purpose of studying Segal categories, and she establishes that both model categories are equivalent to Rezk's model category for complete Segal spaces.

Having such a rich theory at their disposal, one may be motivated to investigate $(\infty, 1)$-categories enriched in a class of spaces with a prescribed homotopical property; a goal of such an investigation can be to define equivalent analogs of the existing models for $(\infty, 1)$-categories. For instance, in the setting of equivariant homotopy theory, equivariant complete Segal spaces were defined by Bergner and Chadwick in \cite{BC2015}, and Bergner also produced equivalent models for $(\infty, 1)$-categories enriched in spaces with a group action in \cite{Bergner2017}.

In this paper, we provide analogs of complete Segal spaces and Segal categories for rational homotopy theory, which we then show to be equivalent. In order not to require mapping spaces to be simply connected, we adopt the framework for the rational homotopy theory of non-simply connected spaces due to G{\'{o}}mez-Tato, Halperin, and Tanr{\'e} \cite{GTHT2000}. In that framework, a space is said to be rational if its higher homotopy groups are rational vector spaces, with no conditions imposed on its fundamental group or its set of path components.

With this notion of a rational space, we provide two equivalent models for rational $(\infty, 1)$-categories, which we define to be $(\infty, 1)$-categories enriched in rational spaces. Our first model is that of rational complete Segal spaces, which are complete Segal spaces with rational mapping spaces, and our second model is that of rational Segal categories, which are Segal categories with rational mapping spaces. By a left Bousfield localization of Rezk's model category for complete Segal spaces and Bergner's model categories for Segal categories, we produce a model category for rational complete Segal spaces and two model categories for rational Segal categories, respectively. We also establish that our model category for rational complete Segal spaces is cartesian; to do so, we apply Rezk's criterion from \cite{Rezk2001} on when a localization of the Reedy model structure remains compatible with the cartesian closure of simplicial spaces. Moreover, the equivalence between complete Segal spaces and Segal categories descends to an equivalence between rational complete Segal spaces and rational Segal categories.

\begin{theorem}\label{bogthm} There exist:

\begin{enumerate}

\item a model category whose fibrant objects are the rational complete Segal spaces;

\item a model category whose fibrant objects are the Reedy fibrant rational Segal categories; and

\item a model category whose fibrant objects are the levelwise fibrant rational Segal categories.

\end{enumerate}
These model categories are Quillen equivalent.
\end{theorem}
To prove these results at the level of simplicial spaces, we first develop a model-categorical approach to the rational homotopy theory of non-simply connected spaces. Specifically, we left Bousfield localize the model structure on simplicial sets to obtain a model category whose fibrant objects are the rational spaces. For this localization, we establish a characterization of rational spaces in terms of function complexes.

In fact, our argument is not exclusive to rational spaces, and it works for any class of spaces that arises from a left Bousfield localization of the model structure on spaces. For example, in the chromatic homotopy-theoretic framework developed by Heuts in \cite{Heuts2021}, our work yields two equivalent models for $(\infty, 1)$-categories enriched in $v_{n}$-periodic spaces for a non-negative integer $n$, with the case $n=0$ being that of rational $(\infty, 1)$-categories.

In the process, we investigate the relationship between complete Segal spaces and Segal categories through the lens of the left transfer of a model category, as developed by Bayeh, Hess, Karpova, K{\k{e}}dziorek, Riehl, and Shipley in \cite{BHKKRS2015}. We show that the model category for Reedy fibrant Segal categories can be described as a left transfer of the model category for complete Segal spaces, as well as that our model category for Reedy fibrant rational Segal categories is left-induced from our model category for rational complete Segal spaces. The former left transfer might be of independent interest as it greatly simplifies Bergner's proof from \cite{Bergner2007}.

Lastly, we hope that the techniques in this paper help us develop rational homotopy-theoretic analogs of other models for $(\infty, 1)$-categories in the future. Such models are quasi-categories, as introduced by Boardman and Vogt in \cite{BV1973} and studied by Joyal in \cite{Joyal2002} and Lurie in \cite{Lurie2009}, as well as simplicial categories, as studied by Bergner in \cite{Bergner2007_3}. We are working on introducing rational analogs of these two models, and then showing that they are equivalent to the two equivalent models for rational $(\infty, 1)$-categories that we introduce in this paper.

\subsection*{Organization of the paper}
Section \ref{section2} comprises the necessary background in simplicial sets and model categories. In Section \ref{section3}, we establish our characterization of rational spaces in terms of function complexes, and we use it to give our model category for rational spaces. Section \ref{section4} has background on complete Segal spaces. Section \ref{section5} is on Segal categories, and it contains the description of the model category for Reedy fibrant Segal categories as a left transfer of the model category for complete Segal spaces. In Section \ref{section6}, we introduce rational Segal categories and we give two model categories for them. In Section \ref{section7}, we introduce rational complete Segal spaces and we produce a cartesian model category for them. Lastly, in Section \ref{section8}, we establish the equivalence of rational complete Segal spaces and rational Segal categories, and we describe our model category for Reedy fibrant rational Segal categories as a left transfer of our model category for rational complete Segal spaces.

\subsection*{Acknowledgments}
I am grateful to my PhD advisor at the University of Virginia, Julie Bergner, for her guidance, feedback, and support at all stages of the research presented in this paper. I would also like to thank Pedro Brunialti Lima de Andrade for helpful conversations about this work.

\section{Simplicial sets and model categories}\label{section2}
We first go over some elements of the theory of simplicial sets and model categories that we use in this paper.

\subsection{Simplicial sets}\label{section2.1} Simplicial sets are a combinatorial model for spaces; our reference is \cite[Chapter I]{GoerssJardine1999}.

\begin{definition}\label{defsimpcatdef} The \emph{simplex category} $\Delta$ is the category whose objects are the sets $[n]=\{0, 1, \dots, n\}$ for a non-negative integer $n$ and whose morphisms are the order-preserving maps $\theta \colon [n] \rightarrow [m]$.
\end{definition}
The simplex category $\Delta$ packages the combinatorial data in the definition of a simplicial set as follows.

\begin{definition}\label{defssetdef} A \emph{simplicial set} is a functor $X \colon \Delta^{\mathrm{op}} \rightarrow {\mathcal{S}\mathrm{ets}}$. In detail, a simplicial set comprises a \emph{set of $n$-simplices} $X_{n}$ for every non-negative integer $n$, \emph{face maps} $d_{i} \colon X_{n} \rightarrow X_{n-1}$ for $0 \leq i \leq n$, and \emph{degeneracy maps} $s_{i} \colon X_{n} \rightarrow X_{n+1}$ for $0 \leq i \leq n$ subject to a list of identities on how the faces and degeneracies are composed.
\end{definition}
We write ${\mathcal{SS}\mathrm{ets}}$ for the category of simplicial sets and their simplicial maps (natural transformations).

\begin{example}\label{defssetdef1} The \emph{standard $n$-simplex} is $\Delta[n]=\operatorname{Hom}_{\Delta}(-, [n])$; the identity $\operatorname{id}_{[n]}$ defines an $n$-simplex $\sigma_{n}$. The \emph{boundary} $\partial\Delta[n]$ is the smallest subcomplex of $\Delta[n]$ containing $d_{i}(\sigma_{n})$ for $0 \leq i \leq n$. For $n \geq 1$ and $0 \leq k \leq n$, the \emph{horn} $\Lambda[n, k]$ is the smallest subcomplex of $\Delta[n]$ containing $d_{i}(\sigma_{n})$ for $i \neq k$.
\end{example}

\begin{example}\label{defssetdef4} For a small category $\mathcal{C}$, its \emph{nerve} $\operatorname{nerve}(\mathcal{C})$ is the simplicial set whose set of $0$-simplices is the set of objects of $\mathcal{C}$ and whose the set of $n$-simplices for $n \geq 1$ is the set of strings of $n$ composable morphisms in $\mathcal{C}$.
\end{example}
We can geometrically realize a simplicial set $X$ to obtain a CW complex $|X|$. Writing ${\mathcal{T}\mathrm{op}}$ for the category of topological spaces, we denote the geometric realization functor by $|-| \colon {\mathcal{SS}\mathrm{ets}} \rightarrow {\mathcal{T}\mathrm{op}}$. Its right adjoint sends a topological space $Y$ to its \emph{singular complex} $\operatorname{Sing}(Y)$, which has $n$-simplices $\operatorname{Sing}(Y)_{n}=\operatorname{Hom}_{\mathcal{T}\mathrm{op}}(|\Delta[n]|, Y)$.

Our work also involves the following notion of a fibration for simplicial sets.

\begin{definition}\label{defkanfibrdef} A \emph{Kan fibration} is a simplicial map $p \colon X \rightarrow Y$ that has the right lifting property with respect to all horn inclusions.
A \emph{Kan complex} is a simplicial set $X$ such that the map $X \rightarrow \Delta[0]$ is a Kan fibration.
\end{definition}

\subsection{Model categories}\label{section2.2} We give a brief review of model categories from \cite[\S 3]{DwyerSpalinski1995} and \cite[Chapters 7-13]{Hirschhorn2003}.

\begin{definition}\label{defmodelcatdef} A \emph{model category} comprises a category $\mathcal{M}$ that has all finite limits and colimits, as well as three distinguished classes of morphisms in $\mathcal{M}$: a class of \emph{weak equivalences}, a class of \emph{cofibrations}, and a class of \emph{fibrations}. A weak equivalence that is also a cofibration is called an \emph{acyclic cofibration}. A weak equivalence that is also a fibration is called an \emph{acyclic fibration}. This data satisfies the axioms in \cite[Definition 3.3]{DwyerSpalinski1995}.
\end{definition}

\begin{remark}\label{defmodelcatdefrmk2} In a model category, the weak equivalences and the cofibrations together determine the fibrations.
\end{remark}

\begin{example}[{\cite[Theorem 7.10.12]{Hirschhorn2003}}]\label{defmodelcatdef1} The category ${\mathcal{SS}\mathrm{ets}}$ has a model structure in which a map $f$ is

\begin{enumerate}

\item a weak equivalence if and only if $|f|$ is a weak homotopy equivalence;

\item a cofibration if and only if $f$ is a monomorphism; and

\item a fibration if and only if $f$ is a Kan fibration.

\end{enumerate}

\end{example}
The model category ${\mathcal{SS}\mathrm{ets}}$ has a list of features that we rely on, but do not explicitly use in this paper; namely, ${\mathcal{SS}\mathrm{ets}}$ is combinatorial \cite{Dugger2001}, left proper \cite[Corollary 13.1.4]{Hirschhorn2003}, and simplicial \cite[Example 9.1.13]{Hirschhorn2003}. In particular, for two simplicial sets $X$ and $Y$, we have a function complex $\operatorname{Map}_{{\mathcal{SS}\mathrm{ets}}}(X, Y)$ with $n$-simplices \[\operatorname{Map}_{{\mathcal{SS}\mathrm{ets}}}(X, Y)_{n}= \operatorname{Hom}_{{\mathcal{SS}\mathrm{ets}}}(X \times \Delta[n], Y).\]

\begin{definition}\label{defmodelcatdefrmk1} Let $\mathcal{M}$ be a model category with initial object $\emptyset$ and terminal object $\ast$. An object $X$ of $\mathcal{M}$ is \emph{cofibrant} if the morphism $\emptyset \rightarrow X$ is a cofibration, and it is \emph{fibrant} if the unique morphism $X \rightarrow \ast$ is a fibration.
\end{definition}
Lastly, we need a notion of equivalence saying that two model categories encode the same homotopy theory.

\begin{definition}\label{defquilleneqdef} A \emph{Quillen adjunction} is an adjunction between model categories $F \colon \mathcal{M} \rightleftarrows \mathcal{N} \colon G$ such that $F$ preserves cofibrations and $G$ preserves fibrations. A \emph{Quillen equivalence} is a Quillen adjunction $F \colon \mathcal{M} \rightleftarrows \mathcal{N} \colon G$ such that, for every cofibrant object $X$ of $\mathcal{M}$ and every fibrant object $Y$ of $\mathcal{N}$, a morphism $F(X) \rightarrow Y$ is a weak equivalence in $\mathcal{N}$ if and only if its adjoint morphism $X \rightarrow G(Y)$ is a weak equivalence in $\mathcal{M}$.
\end{definition}
The following result renders the geometric realization-singular complex adjunction a Quillen equivalence.

\begin{theorem}[{\cite[Chapter I, Theorem 11.4]{GoerssJardine1999}}]\label{quilleneqsecretly} For every topological space $Y$, the map $|\operatorname{Sing}(Y)| \rightarrow Y$ is a weak homotopy equivalence.
\end{theorem}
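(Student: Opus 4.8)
The plan is to prove that the counit map $\epsilon_{Y} \colon |\operatorname{Sing}(Y)| \to Y$ of the adjunction $|-| \dashv \operatorname{Sing}$ induces a bijection on sets of path components and an isomorphism $\pi_{n}(|\operatorname{Sing}(Y)|, x) \to \pi_{n}(Y, \epsilon_{Y}(x))$ for every $n \geq 1$ and every basepoint $x$, since by definition this is exactly the assertion that $\epsilon_{Y}$ is a weak homotopy equivalence. Surjectivity on path components is immediate, because every point $y \in Y$ is a $0$-simplex of $\operatorname{Sing}(Y)$ that $\epsilon_{Y}$ sends back to $y$; so after fixing a basepoint it suffices to analyze the induced maps on homotopy groups.

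The first step exploits that $\operatorname{Sing}(Y)$ is a Kan complex: a horn $|\Lambda[n,k]| \to Y$ extends over $|\Delta[n]|$ because $|\Lambda[n,k]|$ is a retract of $|\Delta[n]|$, so the horn-filling condition holds and the combinatorial homotopy groups $\pi_{n}(\operatorname{Sing}(Y), y)$ are defined. I would identify these with the topological homotopy groups of $Y$ as follows. An element of $\pi_{n}(\operatorname{Sing}(Y), y)$ is represented by a simplicial map $\Delta[n]/\partial\Delta[n] \to \operatorname{Sing}(Y)$ carrying the collapsed boundary to $y$, which under the adjunction and the homeomorphism $|\Delta[n]/\partial\Delta[n]| \cong S^{n}$ corresponds to a continuous map $(S^{n}, \ast) \to (Y, y)$; simplicial homotopies correspond to topological homotopies, yielding a natural isomorphism $\pi_{n}(\operatorname{Sing}(Y), y) \cong \pi_{n}(Y, y)$.

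Next I would identify the combinatorial homotopy groups of the Kan complex $\operatorname{Sing}(Y)$ with the topological homotopy groups of its realization, that is, establish the natural isomorphism $\pi_{n}(K, v) \cong \pi_{n}(|K|, |v|)$ for an arbitrary Kan complex $K$ and apply it to $K = \operatorname{Sing}(Y)$. It then remains to verify that, under these two identifications, the composite isomorphism $\pi_{n}(|\operatorname{Sing}(Y)|, |y|) \cong \pi_{n}(\operatorname{Sing}(Y), y) \cong \pi_{n}(Y, y)$ agrees with the map induced by $\epsilon_{Y}$; this is a naturality check that reduces to the definition of the counit on representing simplices.

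The hard part will be the comparison $\pi_{n}(K, v) \cong \pi_{n}(|K|, |v|)$ for a Kan complex $K$. This is the genuine technical input and requires control over the realization functor beyond its formal adjunction properties: one needs that $|-|$ preserves finite products onto a convenient category of spaces, so that realized simplicial homotopies become honest topological homotopies, that $|K|$ carries a CW structure whose cells are governed by the nondegenerate simplices of $K$, and that the realization of a Kan fibration is a Serre fibration. The last fact licenses an inductive argument, either over the skeleta of $K$ or through a minimal-fibration reduction, that matches combinatorial spheres and their fillings with their topological counterparts. Once this comparison is available, the remaining steps are formal.
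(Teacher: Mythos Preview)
The paper does not prove this statement: it is quoted as \cite[Chapter I, Theorem 11.4]{GoerssJardine1999} and used as a black box, so there is no proof in the paper to compare your proposal against. Your outline is essentially the classical argument one finds in Goerss--Jardine: reduce to the two comparison isomorphisms $\pi_{n}(\operatorname{Sing}(Y), y) \cong \pi_{n}(Y, y)$ (formal from the adjunction) and $\pi_{n}(K, v) \cong \pi_{n}(|K|, |v|)$ for Kan complexes $K$ (the substantive Milnor-type theorem), and then check naturality so that the composite is the map induced by the counit. You have correctly located the real work in the second comparison and named the ingredients it needs (realization preserves finite products, realizes Kan fibrations to Serre fibrations, minimal fibrations or a skeletal induction).

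One small omission: you assert surjectivity on $\pi_{0}$ and then pass to basepointed homotopy groups, but injectivity on $\pi_{0}$ also needs a word. It is immediate---a path in $Y$ between $y_{0}$ and $y_{1}$ is a $1$-simplex of $\operatorname{Sing}(Y)$, whose realization is a path in $|\operatorname{Sing}(Y)|$---but it does not literally follow from the higher $\pi_{n}$ isomorphisms, so you should say it.
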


\subsection{Left Bousfield localization}\label{section2.3} We give an overview of left Bousfield localization from \cite[Chapter 4]{Hirschhorn2003}. Let $\mathcal{M}$ be a simplicial model category; in particular, for any two objects $X$ and $Y$ of $\mathcal{M}$, we have a function complex $\operatorname{Map}_{\mathcal{M}}(X, Y)$. Let $T$ be a set of morphisms in $\mathcal{M}$.

\begin{definition}\label{deflblsdef}

\begin{enumerate}

\item An object $K$ of $\mathcal{M}$ is \emph{$T$-local} if $K$ is fibrant in $\mathcal{M}$ and, for every morphism $\lambda \colon A \rightarrow B$ in $T$, the map $\operatorname{Map}_{\mathcal{M}}(\lambda, K) \colon \operatorname{Map}_{\mathcal{M}}(B, K) \rightarrow \operatorname{Map}_{\mathcal{M}}(A, K)$ is a weak equivalence.

\item A morphism $f \colon X \rightarrow Y$ in $\mathcal{M}$ is a \emph{$T$-local equivalence} if, for every $T$-local object $K$ of $\mathcal{M}$, the map $\operatorname{Map}_{\mathcal{M}}(f, K) \colon \operatorname{Map}_{\mathcal{M}}(Y, K) \rightarrow \operatorname{Map}_{\mathcal{M}}(X, K)$ is a weak equivalence.

\end{enumerate}

\end{definition}

\begin{remark}\label{deflblsdef1} Every morphism in $T$ and every weak equivalence in $\mathcal{M}$ is a $T$-local equivalence.
\end{remark}
The left Bousfield localization of $\mathcal{M}$ at $T$ adds the morphisms in $T$ to the class of weak equivalences in $\mathcal{M}$.

\begin{theorem}[{\cite[Theorem 4.7]{Barwick2010}}]\label{lblexists} Let $\mathcal{M}$ be a combinatorial, left proper, and simplicial model category. Let $T$ be a set of morphisms in $\mathcal{M}$. There is a model structure $\mathcal{L}_{T}\mathcal{M}$ on the underlying category of $\mathcal{M}$, called the \emph{left Bousfield localization of $\mathcal{M}$ at $T$}, in which a morphism $f$ is

\begin{enumerate}

\item a weak equivalence if and only if $f$ is a $T$-local equivalence; and

\item a cofibration if and only if $f$ is a cofibration in $\mathcal{M}$.

\end{enumerate}
An object $K$ is fibrant in $\mathcal{L}_{T}\mathcal{M}$ if and only if $K$ is $T$-local. The model category $\mathcal{L}_{T}\mathcal{M}$ is combinatorial and left proper, and it inherits the simplicial structure of $\mathcal{M}$.
\end{theorem}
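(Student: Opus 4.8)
The plan is to deduce the existence of $\mathcal{L}_{T}\mathcal{M}$ from Jeff Smith's recognition theorem for combinatorial model categories, which builds a combinatorial model structure on a locally presentable category out of a set of generating cofibrations and a prescribed class of weak equivalences, provided these satisfy a short list of closure and accessibility conditions. I would take the generating cofibrations to be a set $I$ of generating cofibrations for $\mathcal{M}$, which exists since $\mathcal{M}$ is combinatorial and whose saturation $\operatorname{cof}(I)$ is exactly the class of cofibrations of $\mathcal{M}$; and I would take the candidate weak equivalences to be the class $W$ of $T$-local equivalences of Definition \ref{deflblsdef}. With these choices the recognition theorem requires: (a) $W$ satisfies two-out-of-three and is closed under retracts; (b) every map with the right lifting property against $I$ lies in $W$; (c) $W$ is an accessible subcategory of the category of morphisms of $\mathcal{M}$; and (d) the class $W \cap \operatorname{cof}(I)$ is closed under pushout and transfinite composition.

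Conditions (a) and (b) are formal. Because $W$ is defined by the requirement that $\operatorname{Map}_{\mathcal{M}}(-, K)$ carry a morphism to a weak homotopy equivalence of simplicial sets for every $T$-local $K$, and weak homotopy equivalences enjoy two-out-of-three and are closed under retracts, the same properties pass to $W$; this gives (a). For (b), a map with the right lifting property against $I$ is an acyclic fibration of $\mathcal{M}$, hence a weak equivalence of $\mathcal{M}$, hence a $T$-local equivalence by Remark \ref{deflblsdef1}.

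Condition (d) I would obtain from the simplicial structure. For a cofibration $f$ and a $T$-local object $K$, the enrichment makes $\operatorname{Map}_{\mathcal{M}}(f, K)$ a Kan fibration, so $f$ lies in $W$ precisely when $\operatorname{Map}_{\mathcal{M}}(f, K)$ is an acyclic Kan fibration for every such $K$. Since $\operatorname{Map}_{\mathcal{M}}(-, K)$ turns pushouts into pullbacks and transfinite composites into towers, and since acyclic Kan fibrations are stable under base change and under inverse limits of towers, the class $W \cap \operatorname{cof}(I)$ inherits closure under pushout and transfinite composition. The genuinely hard step is the accessibility in (c): here I would use combinatoriality to construct a functorial localization of $\mathcal{M}$ sending each object to a $T$-local one, realized by a transfinite application of the small object argument to $I$ enlarged by a set of horns built from $T$, and argue that this functor is accessible; a morphism then belongs to $W$ exactly when its localization is a weak equivalence of $\mathcal{M}$, so accessibility of $W$ follows from accessibility of the weak equivalences of the combinatorial model category $\mathcal{M}$ together with that of the localization functor. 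I expect (c) to be the main obstacle, as it is the point where the small-object and accessibility machinery must be marshaled carefully.

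Granting the recognition theorem, the model structure $\mathcal{L}_{T}\mathcal{M}$ has cofibrations $\operatorname{cof}(I)$ and weak equivalences $W$, which establishes the two displayed clauses, and it is combinatorial by construction. An object $K$ is then fibrant exactly when it has the right lifting property against every map in $W \cap \operatorname{cof}(I)$; since this class contains both the acyclic cofibrations of $\mathcal{M}$ and the horns on $T$ used in (c), such lifting is equivalent to $K$ being fibrant in $\mathcal{M}$ and local with respect to $T$, which is precisely $T$-locality. That $\mathcal{L}_{T}\mathcal{M}$ is left proper follows from left properness of $\mathcal{M}$ together with the stability of $T$-local equivalences under pushout along cofibrations; and the simplicial structure is inherited once one verifies the pushout-product axiom, which reduces, since the cofibrations are unchanged, to checking that the pushout-product of a cofibration with a map in $W \cap \operatorname{cof}(I)$ again lies in $W \cap \operatorname{cof}(I)$, an assertion one deduces from the simplicial compatibility already present in $\mathcal{M}$ by the same $\operatorname{Map}_{\mathcal{M}}(-, K)$ argument used for (d).
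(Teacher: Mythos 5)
The paper does not actually prove Theorem \ref{lblexists}: it quotes it from \cite{Barwick2010}. Your strategy---Smith's recognition theorem applied to the cofibrations of $\mathcal{M}$ and the class $W$ of $T$-local equivalences---is precisely the route taken in that cited source, so in the only meaningful sense available your proposal follows ``the paper's'' approach. The parts you execute are correct: (a) and (b) are formal as you say; and for (d), the observation that for a cofibration $f$ and a $T$-local object $K$ the map $\operatorname{Map}_{\mathcal{M}}(f,K)$ is a Kan fibration, so that membership of $f$ in $W$ becomes an acyclic-fibration condition which is stable under the pullbacks and inverse towers that $\operatorname{Map}_{\mathcal{M}}(-,K)$ produces from pushouts and transfinite composites, is the standard argument; the same mechanism correctly disposes of the pushout-product axiom at the end.

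Two places fall short of a proof. First, condition (c), the accessibility of $W$, is the mathematical heart of the theorem, and you leave it at the level of intention. Making it precise requires constructing the localization functor (the small object argument applied to $I$ together with the pushout-products of maps in $T$ with the boundary inclusions $\partial\Delta[n] \rightarrow \Delta[n]$), proving that $LX$ is $T$-local and that $X \rightarrow LX$ is a $T$-local equivalence, and only then deducing that $f \in W$ if and only if $L(f)$ is a weak equivalence in $\mathcal{M}$; this package is essentially the entire content of the cited proof, so as written your argument defers rather than settles the crux. Second, two of your closing assertions are under-justified: the claim that $T$-locality suffices for fibrancy in $\mathcal{L}_{T}\mathcal{M}$ needs the argument that a $T$-local $K$ has the right lifting property against \emph{every} cofibration in $W$ (which does follow, since $\operatorname{Map}_{\mathcal{M}}(f,K)$ is then an acyclic Kan fibration, hence surjective on vertices, producing the required lift); and your left-properness sentence is circular as stated, because ``stability of $T$-local equivalences under pushout along cofibrations'' \emph{is} left properness of $\mathcal{L}_{T}\mathcal{M}$---the actual argument derives this from left properness of $\mathcal{M}$, under which pushouts along cofibrations are homotopy pushouts and homotopy pushouts preserve $T$-local equivalences.
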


\subsection{Cartesian model categories}\label{section2.4} Lastly, we go over cartesian model categories following \cite[Chapter 2.9]{Bergner2018}.

\begin{definition}\label{defcccdef} A category $\mathcal{C}$ is \emph{cartesian closed} if it has finite products and, for any two objects $X$ and $Y$, an internal function object $Y^{X}$ satisfying, for any object $Z$, a natural isomorphism \[\operatorname{Hom}_{\mathcal{C}}(Z, Y^{X}) \cong \operatorname{Hom}_{\mathcal{C}}(Z \times X, Y).\]
\end{definition}
A cartesian model category is a cartesian closed category with a compatible model structure.

\begin{definition}[{\cite[\S 2.2]{Rezk2010}}]\label{defcmcdef} A model category $\mathcal{M}$ is \emph{cartesian} if its underlying category is cartesian closed, its terminal object is cofibrant, and, given a cofibration $i \colon A \rightarrow A'$ and a fibration $p \colon X' \rightarrow X$, the induced map \[q \colon (X')^{A'} \rightarrow (X')^{A} \underset{X^{A}}{\times} X^{A'}\] is a fibration that is acyclic if $i$ or $p$ is acyclic.
\end{definition}
The model category ${\mathcal{SS}\mathrm{ets}}$ is cartesian, for its cartesian structure coincides with its simplicial structure.

\section{Rational homotopy theory of non-simply connected spaces}\label{section3}
From now on, let $M$ be a multiplicative subset of $\mathbb{Z}$. We study the rational homotopy theory of non-simply connected spaces. In particular, we focus on the spaces whose higher homotopy groups are $M^{-1}\mathbb{Z}$-modules; we refer to such spaces as \emph{$M$-local spaces}. When $M^{-1}\mathbb{Z}=\mathbb{Q}$, we arrive at the study of rational spaces. Our main result is a characterization of $M$-local spaces in terms of function complexes, which we use to left Bousfield localize the model category of simplicial sets and get a model category whose fibrant objects are the $M$-local Kan complexes. In particular, we obtain a model category whose fibrant objects are the rational Kan complexes.

\subsection{The construction of local spheres}\label{section3.1}
Let $M$ be a multiplicative subset of $\mathbb{Z}$ and let $n \geq 2$. Our work in this section makes reference to the CW complex construction of the {$M$-local $n$-sphere} $S^{n}_{M^{-1}\mathbb{Z}}$ from \cite[Chapter 9(a)]{FHT2001}, which subsumes the construction of the {rational $n$-sphere} $S^{n}_{\mathbb{Q}}$. We go over this construction.

Let $\mathbb{N}$ denote the set of positive integers. We enumerate the set $M \cap \mathbb{N}=\{m_{1}, m_{2}, m_{3}, \dots\}$. We denote the $n$-sphere by $S^{n}$ and the $(n+1)$-disk by $D^{n+1}$. The \emph{$M$-local $n$-sphere} $S^{n}_{M^{-1}\mathbb{Z}}$ is constructed as the pushout
\[\begin{tikzcd}[row sep=3mm, column sep=3mm, ampersand replacement=\&]
	{\coprod\limits_{j=1}^{\infty} (S^{n})_{j}} \&\& {\bigvee\limits_{i=0}^{\infty} (S^{n})_{i}} \\
	\\
	{\coprod\limits_{j=1}^{\infty} (D^{n+1})_{j}} \&\& {S^{n}_{M^{-1}\mathbb{Z}},}
	\arrow["h", from=1-1, to=1-3]
	\arrow[from=1-1, to=3-1]
	\arrow[from=1-3, to=3-3]
	\arrow[from=3-1, to=3-3]
\end{tikzcd}\]
where, for $j \geq 1$, the $j$-th $(n+1)$-cell $(D^{n+1})_{j}$ is attached to $\bigvee\limits_{i=0}^{\infty} (S^{n})_{i}$ by an attaching map $h$ given by
\[S^{n} \xrightarrow{\textrm{pinch}} S^{n} \vee S^{n} \xrightarrow{\operatorname{id}_{S^{n}} \vee (-m_{j})} (S^{n})_{j-1} \vee (S^{n})_{j} \xrightarrow{\textrm{inclusion}} \bigvee\limits_{i=0}^{\infty} (S^{n})_{i}.\]
The map $(-m_{j})$ is a self-map of $S^{n}$ that has degree $(-m_{j})$. In other words, the simply connected CW complex \[S^{n}_{M^{-1}\mathbb{Z}}=\left(\bigvee\limits_{i=0}^{\infty} (S^{n})_{i}\right) \underset{h}{\bigcup} \left(\coprod\limits_{j=1}^{\infty} (D^{n+1})_{j}\right)\] is the mapping telescope of the sequence of maps $S^{n} \xrightarrow{m_{1}} S^{n} \xrightarrow{m_{2}} S^{n} \xrightarrow{m_{3}} \cdots$.

\subsection{A characterization of rational spaces}\label{section3.2}
Let $M$ be a multiplicative subset of $\mathbb{Z}$. A simply connected space is \emph{$M$-local} if its higher homotopy groups are $M^{-1}\mathbb{Z}$-modules. This notion can be extended to all spaces.

\begin{definition}[{\cite[Definition 6.1]{GTHT2000}}]\label{deflocalnonscdef} Let $M$ be a multiplicative subset of $\mathbb{Z}$. A space $Y$ is \emph{$M$-local} if, for every $n \geq 2$, the abelian group $\pi_{n}(Y)$ is an $M^{-1}\mathbb{Z}$-module. In the case when $M^{-1}\mathbb{Z}=\mathbb{Q}$, a space $Y$ is said to be \emph{rational} if, for every $n \geq 2$, the abelian group $\pi_{n}(Y)$ is a rational vector space.
\end{definition}

\begin{example}\label{deflocalnonscdef1} The spaces $S^{1}$, $S^{1} \vee S^{1}$, and $\mathbb{R}P^{\infty}$ are rational because their higher homotopy groups vanish.
\end{example}
We use the construction of $S^{n}_{M^{-1}\mathbb{Z}}$ from Section \ref{section3.1}, including our notation, to prove the following key result.

\begin{proposition}\label{thebigdealnonsc} Let $M$ be a multiplicative subset of $\mathbb{Z}$ and let $n \geq 2$. Let $Y$ be a space such that $\pi_{n}(Y)$ is an $M^{-1}\mathbb{Z}$-module. Then, every map $f \colon S^{n} \rightarrow Y$ extends uniquely up to homotopy to a map $g \colon S^{n}_{M^{-1}\mathbb{Z}} \rightarrow Y$ such that the diagram

\[\begin{tikzcd}[row sep=3mm, column sep=3mm, ampersand replacement=\&]
	{S^{n}} \&\& {S^{n}_{M^{-1}\mathbb{Z}}} \\
	\\
	\&\& {Y}
	\arrow[from=1-1, to=1-3]
	\arrow["f"', from=1-1, to=3-3]
	\arrow["g"', dashed, from=1-3, to=3-3]
\end{tikzcd}\]
commutes.
\end{proposition}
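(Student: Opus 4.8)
The plan is to exploit the explicit CW structure of $S^{n}_{M^{-1}\mathbb{Z}}$ recorded in Section \ref{section3.1}: it is built from the wedge $\bigvee_{i=0}^{\infty}(S^{n})_{i}$ of $n$-cells by attaching one $(n+1)$-cell $(D^{n+1})_{j}$ for each $j \geq 1$ along the map $h_{j}$, and the horizontal map $S^{n} \to S^{n}_{M^{-1}\mathbb{Z}}$ in the diagram is the inclusion of the bottom sphere $(S^{n})_{0}$. Extending $f$ thus amounts to choosing a based map $f_{i} \colon (S^{n})_{i} \to Y$ for each $i$ with $f_{0}=f$, and then filling each attached cell. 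I work with based maps into the path component of $y_{0}=f(\ast)$, so all classes live in $\pi_{n}(Y,y_{0})$, which is abelian since $n \geq 2$, and I use the two standard facts that precomposition with a degree-$d$ self-map of $S^{n}$ multiplies a class in $\pi_{n}(Y)$ by $d$, and that precomposition with the pinch map realizes the addition in $\pi_{n}(Y)$.

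For existence, I set $f_{0}=f$ and define the $f_{j}$ inductively. Reading off the attaching map $h_{j}=(\textrm{inclusion})\circ(\operatorname{id}\vee(-m_{j}))\circ(\textrm{pinch})$, the composite $g\circ h_{j}$ represents $[f_{j-1}]-m_{j}[f_{j}]$ in $\pi_{n}(Y)$ by the two facts above. A based map $S^{n} \to Y$ extends over $D^{n+1}$ precisely when it is null-homotopic, i.e.\ when this class vanishes, so the $j$-th cell is fillable exactly when $[f_{j-1}]=m_{j}[f_{j}]$. Since $\pi_{n}(Y)$ is an $M^{-1}\mathbb{Z}$-module and $m_{j}\in M$, multiplication by $m_{j}$ is invertible, so I may take $f_{j}$ to represent $(m_{1}\cdots m_{j})^{-1}[f]$; every cell is then fillable, and choosing fillings assembles a map $g$ with $g|_{(S^{n})_{0}}=f$, yielding the commuting triangle.

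For uniqueness, let $g,g'$ be two such extensions, with restrictions $f_{i},f'_{i}$. The fillability relations force $[f_{j-1}]=m_{j}[f_{j}]$ and $[f'_{j-1}]=m_{j}[f'_{j}]$, and since $[f_{0}]=[f'_{0}]=[f]$ while multiplication by each $m_{j}$ is injective on $\pi_{n}(Y)$, induction gives $[f_{i}]=[f'_{i}]$ for all $i$; thus $g$ and $g'$ agree up to homotopy on the $n$-skeleton. Choosing homotopies $F_{i}\colon f_{i}\simeq f'_{i}$ (with $F_{0}$ constant) produces a homotopy on the wedge, and the problem becomes extending it over each $(D^{n+1})_{j}\times I$. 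The obstruction to extending over the $j$-th cell is a class $o_{j}\in\pi_{n+1}(Y)$, and replacing $F_{i}$ by a different homotopy alters $\{o_{j}\}$ by a term $\alpha_{j-1}-m_{j}\alpha_{j}$ with $\alpha_{i}\in\pi_{n+1}(Y)$ (the coefficient $-m_{j}$ coming from the same degree computation as above), so the homotopy can be completed exactly when the tower $\cdots\xrightarrow{\times m_{2}}\pi_{n+1}(Y)\xrightarrow{\times m_{1}}\pi_{n+1}(Y)$ has vanishing $\lim^{1}$. I expect this to be the main obstacle. The $n$-skeleton is handled for free by invertibility of the $m_{j}$ on $\pi_{n}(Y)$, but the top cells push the genuine content into $\pi_{n+1}(Y)$, where one needs the $m_{j}$ to act well enough to conclude; the cleanest sufficient condition is that $\pi_{n+1}(Y)$ be $M^{-1}\mathbb{Z}$-local as well, which makes the tower one of isomorphisms and forces $\lim^{1}=0$. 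Conceptually, the entire statement is the assertion that the restriction map $[S^{n}_{M^{-1}\mathbb{Z}},Y]\to\pi_{n}(Y)$ is a bijection, and the Milnor $\lim^{1}$ sequence for the telescope $S^{n}_{M^{-1}\mathbb{Z}}$ reduces this to exactly the two computations above: the $\lim$ term collapses to $\pi_{n}(Y)$ by invertibility, and the remaining work is the vanishing of the $\pi_{n+1}$-level $\lim^{1}$.
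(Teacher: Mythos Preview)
Your existence argument is the paper's: recursively set $[f_j]=(m_1\cdots m_j)^{-1}[f]$ in $\pi_n(Y)$ and fill the attached cells. For uniqueness the paper simply asserts that the extension over each partial telescope is unique up to homotopy and then writes ``continuing further down the telescope in this fashion yields the claim.'' The finite-stage uniqueness is genuine---each partial telescope deformation retracts to its top sphere $(S^n)_k$, and restriction to $(S^n)_0$ becomes multiplication by $m_1\cdots m_k$, an isomorphism on $\pi_n(Y)$---but you have correctly isolated the step the paper elides: passage to the colimit is governed by the Milnor sequence
\[
0 \longrightarrow {\textstyle\lim^1_k}\,\pi_{n+1}(Y) \longrightarrow [S^{n}_{M^{-1}\mathbb{Z}}, Y] \longrightarrow {\textstyle\lim_k}\,\pi_n(Y) \longrightarrow 0
\]
for the towers with transition maps $\times m_k$, and the hypothesis on $\pi_n(Y)$ alone does not kill the $\lim^1$ term.

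This is a genuine gap in the statement as written, not merely in your argument. In the rational case $M=\mathbb{Z}\setminus\{0\}$ take $Y=S^{n+1}$: then $\pi_n(Y)=0$ is trivially a $\mathbb{Q}$-vector space while $\pi_{n+1}(Y)=\mathbb{Z}$, the tower $\cdots\to\mathbb{Z}\to\mathbb{Z}$ fails Mittag--Leffler, and $[S^n_{\mathbb{Q}},S^{n+1}]$ is uncountable, so even the extension of the constant map is far from unique up to homotopy. Your proposed repair---require $\pi_{n+1}(Y)$ to be an $M^{-1}\mathbb{Z}$-module as well, making the tower one of isomorphisms and forcing $\lim^1=0$---is exactly what is needed, and it costs the paper nothing: the only invocation of this proposition is in the forward direction of Proposition~\ref{illdothisonenonsc}, where $Y$ is already assumed $M$-local in every degree $\geq 2$.
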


\begin{proof} Given a map $f \colon S^{n} \rightarrow Y$, we know that $[f]$ is an element of the $M^{-1}\mathbb{Z}$-module $\pi_{n}(Y)$. Thus, there exists a unique element $[u]$ of $\pi_{n}(Y)$ such that $[f]=m_{1} \cdot [u]$, and the representative $u \colon S^{n} \rightarrow Y$ of $[u]$ is unique up to homotopy. Using the notation in our construction of $S^{n}_{M^{-1}\mathbb{Z}}$, we extend $f$ from $S^{n}=(S^{n})_{0}$ to $(S^{n})_{0} \vee (S^{n})_{1}$ uniquely up to homotopy by setting $g|_{(S^{n})_{1}}=u$. Since $[f]-(m_{1} \cdot [u])=0$ in $\pi_{n}(Y)$, we can further extend $f$ to \[((S^{n})_{0} \vee (S^{n})_{1}) \underset{h}{\bigcup} (D^{n+1})_{1}\] uniquely up to homotopy. Continuing further down the telescope in this fashion yields the claim.
\end{proof}
For two topological spaces $X$ and $Y$, the {function complex} $\operatorname{Map}_{\mathcal{T}\mathrm{op}}(X, Y)$ is the simplicial set with $n$-simplices \[\operatorname{Map}_{\mathcal{T}\mathrm{op}}(X, Y)_{n}=\operatorname{Hom}_{\mathcal{T}\mathrm{op}}(X \times |\Delta[n]|, Y).\]
Note that $\pi_{0}(\operatorname{Map}_{\mathcal{T}\mathrm{op}}(X, Y))=[X, Y]$ is the set of homotopy classes of maps from $X$ to $Y$. We provide our promised characterization of $M$-local spaces, and thus that of rational spaces, in terms of function complexes.

\begin{proposition}\label{illdothisonenonsc} Let $M$ be a multiplicative subset of $\mathbb{Z}$. A space $Y$ is $M$-local if and only if, for every $n \geq 2$, the restriction map $\operatorname{Map}_{\mathcal{T}\mathrm{op}}(S^{n}_{M^{-1}\mathbb{Z}}, Y) \rightarrow \operatorname{Map}_{\mathcal{T}\mathrm{op}}(S^{n}, Y)$ is a weak equivalence.
\end{proposition}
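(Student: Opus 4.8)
The plan is to compare the two function complexes by first discarding the contribution of the basepoint and then studying precomposition with the degree maps $m_j$ that assemble the telescope $S^n_{M^{-1}\mathbb{Z}}$. Write $i \colon S^n \hookrightarrow S^n_{M^{-1}\mathbb{Z}}$ for the inclusion of the bottom sphere $(S^n)_0$ from Section~\ref{section3.1}, so that the restriction map in the statement is $i^{*}=\operatorname{Map}_{\mathcal{T}\mathrm{op}}(i, Y)$. Since $i$ is a based cofibration, $i^{*}$ is a map of evaluation fibrations $\operatorname{Map}_{\mathcal{T}\mathrm{op}}(-, Y) \to Y$ lying over $\operatorname{id}_{Y}$ and compatible with the common section given by the constant maps. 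Hence $i^{*}$ is a weak homotopy equivalence if and only if it is one on each homotopy fiber, that is, if and only if the induced map of based function complexes $\operatorname{Map}_{*}(S^n_{M^{-1}\mathbb{Z}}, Y) \to \operatorname{Map}_{*}(S^n, Y)=\Omega^{n}Y$ is a weak homotopy equivalence over every basepoint of $Y$. This reduction runs in both directions, so it suffices to treat the based statement. The local input throughout is the effect of precomposition with a degree-$m$ self-map $\mu_{m}\colon S^n \to S^n$: it induces multiplication by $m$ on $\pi_{0}\operatorname{Map}_{*}(S^n, Y)=\pi_{n}(Y)$, and multiplication by $m$ on $\pi_{k}\Omega^{n}Y=\pi_{k+n}(Y)$.

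For the forward implication I assume $Y$ is $M$-local and first show that each $\mu_{m_{j}}^{*}\colon \Omega^{n}Y \to \Omega^{n}Y$ is a weak homotopy equivalence. The cofiber sequence $S^n \xrightarrow{\mu_{m_{j}}} S^n \to C$, with $C=S^n \cup_{m_{j}} e^{n+1}$ the mod-$m_{j}$ Moore space, yields a homotopy fiber sequence $\operatorname{Map}_{*}(C, Y) \to \Omega^{n}Y \xrightarrow{\mu_{m_{j}}^{*}} \Omega^{n}Y$. The homotopy groups $\pi_{k}\operatorname{Map}_{*}(C, Y)=[\Sigma^{k}C, Y]_{*}$ are trapped in the usual exact sequence between the kernel and cokernel of multiplication by $m_{j}$ on $\pi_{n+k}(Y)$ and $\pi_{n+k+1}(Y)$; as these are $M^{-1}\mathbb{Z}$-modules and $n+k \geq 2$, multiplication by $m_{j}$ is invertible, so $\operatorname{Map}_{*}(C, Y)$ is weakly contractible and $\mu_{m_{j}}^{*}$ is a weak homotopy equivalence. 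Finally, since $S^n_{M^{-1}\mathbb{Z}}$ is the mapping telescope of $S^n \xrightarrow{m_{1}} S^n \xrightarrow{m_{2}} \cdots$, applying $\operatorname{Map}_{*}(-, Y)$ turns this homotopy colimit into the homotopy limit of the tower $\cdots \xrightarrow{\mu_{m_{2}}^{*}} \Omega^{n}Y \xrightarrow{\mu_{m_{1}}^{*}} \Omega^{n}Y$, with $i^{*}$ the projection to the bottom term. As every bonding map is now a weak homotopy equivalence, the projection from the homotopy limit is one as well. At the level of $\pi_{0}$ this recovers the bijection $[S^n_{M^{-1}\mathbb{Z}}, Y]_{*}\cong \pi_{n}(Y)$ supplied by Proposition~\ref{thebigdealnonsc}.

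For the converse I assume every $i^{*}$ is a weak homotopy equivalence and deduce that multiplication by each $m \in M \cap \mathbb{N}$ is an isomorphism on $\pi_{n}(Y)$ for all $n \geq 2$, which is exactly $M$-locality by Definition~\ref{deflocalnonscdef}. Fix such an $m$ and choose a self-map $(\mu_{m})_{\mathrm{loc}}\colon S^n_{M^{-1}\mathbb{Z}} \to S^n_{M^{-1}\mathbb{Z}}$ inducing multiplication by $m$ on $\pi_{n}=M^{-1}\mathbb{Z}$; since $m$ is invertible in $M^{-1}\mathbb{Z}$ and both spaces are simply connected with reduced homology only in degree $n$, this map is an $M^{-1}\mathbb{Z}$-homology isomorphism, hence a homotopy equivalence. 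Because $(\mu_{m})_{\mathrm{loc}}\circ i$ and $i \circ \mu_{m}$ both represent $m \in \pi_{n}(S^n_{M^{-1}\mathbb{Z}})$, the square relating $i$, $\mu_{m}$, and $(\mu_{m})_{\mathrm{loc}}$ commutes up to homotopy. Applying $\operatorname{Map}_{*}(-, Y)$ gives a homotopy-commutative square in which both copies of $i^{*}$ and the top map $(\mu_{m})_{\mathrm{loc}}^{*}$ are weak homotopy equivalences; by two-out-of-three the bottom map $\mu_{m}^{*}\colon \Omega^{n}Y \to \Omega^{n}Y$ is a weak homotopy equivalence, so its effect on $\pi_{0}$, namely multiplication by $m$ on $\pi_{n}(Y)$, is a bijection.

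The main obstacle I anticipate is bookkeeping rather than conceptual content. I must make the evaluation-fibration reduction precise over every basepoint, in particular for non-path-connected $Y$ and at non-constant basepoints of $\Omega^{n}Y$, where I intend to use that $\Omega^{n}Y$ is a grouplike $H$-space so that all components are interchangeable and the weak equivalence at the identity component propagates. I also need to track carefully that precomposition with a degree-$m$ map acts by multiplication by $m$ in the correct degrees, and to confirm the homotopy colimit/limit interchange for the mapping telescope together with the vanishing of the relevant $\varprojlim^{1}$ terms, which here is automatic because the tower consists of weak equivalences.
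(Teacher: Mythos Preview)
Your argument is correct but takes a different route from the paper. For the forward direction the paper simply invokes Proposition~\ref{thebigdealnonsc}, which literally only supplies the bijection on $\pi_0$; the passage to a full weak equivalence is left implicit (presumably by applying the same unique-extension argument with $Y$ replaced by $Y^{|\Delta[k]|}$). You instead reduce to based mapping spaces via the evaluation fibration, identify $\operatorname{Map}_*(S^n_{M^{-1}\mathbb{Z}}, Y)$ with the homotopy inverse limit of the tower of $\Omega^n Y$'s, and show each bonding map $\mu_{m_j}^*$ is a weak equivalence using the Moore-space cofiber sequence. For the converse both arguments exploit that $\pi_n(S^n_{M^{-1}\mathbb{Z}})$ is already an $M^{-1}\mathbb{Z}$-module: the paper uses only the $\pi_0$-bijection to extend $f$ to $g\colon S^n_{M^{-1}\mathbb{Z}}\to Y$ and then \emph{defines} $\tfrac{1}{m}[f]=g_*\bigl(\tfrac{1}{m}[\tau]\bigr)$ by hand, whereas you package the same idea as a two-out-of-three argument with the self-equivalence $(\mu_m)_{\mathrm{loc}}$. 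The paper's proof is shorter and entirely elementary; your approach is more structural, uses standard cofiber/fiber and $\operatorname{holim}$ machinery, and has the virtue that your forward direction actually spells out why the map is a weak equivalence in all degrees rather than just on $\pi_0$.
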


\begin{proof} If $Y$ is $M$-local, then Proposition \ref{thebigdealnonsc} implies that $\operatorname{Map}_{\mathcal{T}\mathrm{op}}(S^{n}_{M^{-1}\mathbb{Z}}, Y) \rightarrow \operatorname{Map}_{\mathcal{T}\mathrm{op}}(S^{n}, Y)$ is a weak equivalence for every $n \geq 2$. Conversely, for $n \geq 2$, let $[f]$ be an element of $\pi_{n}(Y)$ represented by a map $f \colon S^{n} \rightarrow Y$, and let $m \in M$. The induced bijection of path components $[S^{n}_{M^{-1}\mathbb{Z}}, Y] \xrightarrow{\cong} [S^{n}, Y]$ informs us that there exists a map $g \colon S^{n}_{M^{-1}\mathbb{Z}} \rightarrow Y$, unique up to homotopy, such that the diagram
\[\begin{tikzcd}[row sep=3mm, column sep=3mm, ampersand replacement=\&]
	{S^{n}} \&\& {S^{n}_{M^{-1}\mathbb{Z}}} \\
	\\
	\&\& Y
	\arrow["{\tau}", from=1-1, to=1-3]
	\arrow["f"', from=1-1, to=3-3]
	\arrow["g"', from=1-3, to=3-3]
\end{tikzcd}\]
commutes up to homotopy, where $\tau$ denotes the inclusion. Homotopy commutativity implies that $[f]=g_{\ast}([\tau])$. As $S^{n}_{M^{-1}\mathbb{Z}}$ is $M$-local, there is a unique element $\dfrac{1}{m} \cdot [\tau]$ in $\pi_{n}(S^{n}_{M^{-1}\mathbb{Z}})$, and we define
$\dfrac{1}{m} \cdot [f]=g_{\ast}\left(\dfrac{1}{m} \cdot [\tau]\right)$.
\end{proof}

\subsection{The model category for rational spaces}\label{section3.3}
Let $M$ be a multiplicative subset of $\mathbb{Z}$. Our characterization of $M$-local spaces in Proposition \ref{illdothisonenonsc} prompts us to left Bousfield localize ${\mathcal{SS}\mathrm{ets}}$ at the set \[T_{M^{-1}\mathbb{Z}}=\{\operatorname{Sing}(S^{n}) \rightarrow \operatorname{Sing}(S^{n}_{M^{-1}\mathbb{Z}}) \mid n \geq 2 \}.\] We characterize the $T_{M^{-1}\mathbb{Z}}$-local objects in terms of the counterpart of Definition \ref{deflocalnonscdef} for simplicial sets.

\begin{definition}\label{defrhtssetsnonscdef} Let $M$ be a multiplicative subset of $\mathbb{Z}$. A simplicial set $K$ is \emph{$M$-local} if $|K|$ is an $M$-local space. A simplicial set $K$ is \emph{rational} if $|K|$ is a rational space.
\end{definition}
To transfer our work in Section \ref{section3.2} from topological spaces to simplicial sets, we use the following proposition.

\begin{proposition}[{\cite[Proposition 1.1.11]{Hirschhorn2003}}]\label{pshhelpmenonsc} If $A$ and $X$ are simplicial sets and $X$ is a Kan complex, then there is a natural weak equivalence $\operatorname{Map}_{\mathcal{SS}\mathrm{ets}}(A, X) \rightarrow \operatorname{Map}_{\mathcal{T}\mathrm{op}}(|A|, |X|)$.
\end{proposition}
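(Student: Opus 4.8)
The plan is to build the comparison map out of the geometric realization functor and then recognize it, via the realization--singular complex adjunction, as a map of function complexes induced by a weak equivalence of Kan complexes. Concretely, an $n$-simplex of $\operatorname{Map}_{\mathcal{SS}\mathrm{ets}}(A, X)$ is a simplicial map $\varphi \colon A \times \Delta[n] \to X$; applying $|-|$ and using the natural homeomorphism $|A \times \Delta[n]| \cong |A| \times |\Delta[n]|$ produces a continuous map $|A| \times |\Delta[n]| \to |X|$, that is, an $n$-simplex of $\operatorname{Map}_{\mathcal{T}\mathrm{op}}(|A|, |X|)$. I would first check that this assignment commutes with the face and degeneracy maps, so that it assembles into a simplicial map $\Phi \colon \operatorname{Map}_{\mathcal{SS}\mathrm{ets}}(A, X) \to \operatorname{Map}_{\mathcal{T}\mathrm{op}}(|A|, |X|)$ that is natural in $A$ and $X$.

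Next I would identify the target in simplicial terms. Combining the homeomorphism $|A \times \Delta[n]| \cong |A| \times |\Delta[n]|$ with the adjunction isomorphism $\operatorname{Hom}_{\mathcal{T}\mathrm{op}}(|B|, T) \cong \operatorname{Hom}_{\mathcal{SS}\mathrm{ets}}(B, \operatorname{Sing}(T))$ (taken at $T = |X|$ and $B = A \times \Delta[n]$) yields a natural isomorphism of simplicial sets $\operatorname{Map}_{\mathcal{T}\mathrm{op}}(|A|, |X|) \cong \operatorname{Map}_{\mathcal{SS}\mathrm{ets}}(A, \operatorname{Sing}|X|)$. I would then verify, using naturality of the unit $\eta \colon \operatorname{id} \to \operatorname{Sing} \circ |-|$, that under this isomorphism $\Phi$ corresponds exactly to the map $\operatorname{Map}_{\mathcal{SS}\mathrm{ets}}(A, \eta_X)$ induced by $\eta_X \colon X \to \operatorname{Sing}|X|$: the adjoint of $|\varphi|$ is $\operatorname{Sing}(|\varphi|) \circ \eta_{A \times \Delta[n]} = \eta_X \circ \varphi$. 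It therefore suffices to show that $\operatorname{Map}_{\mathcal{SS}\mathrm{ets}}(A, \eta_X)$ is a weak homotopy equivalence.

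Finally, I would argue that $\eta_X$ is a weak homotopy equivalence between Kan complexes and invoke the simplicial structure of ${\mathcal{SS}\mathrm{ets}}$. That $\eta_X$ is a weak equivalence follows from Theorem~\ref{quilleneqsecretly}: by the triangle identity the composite $|X| \xrightarrow{|\eta_X|} \bigl|\operatorname{Sing}|X|\bigr| \xrightarrow{\varepsilon_{|X|}} |X|$ is the identity, and since $\varepsilon_{|X|}$ is a weak homotopy equivalence, the two-out-of-three axiom forces $|\eta_X|$, and hence $\eta_X$ itself, to be one. Now $\operatorname{Sing}|X|$ is always a Kan complex and $X$ is a Kan complex by hypothesis, while every simplicial set is cofibrant since cofibrations are the monomorphisms (Example~\ref{defmodelcatdef1}). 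Because ${\mathcal{SS}\mathrm{ets}}$ is a simplicial model category, the pushout--product axiom shows that for cofibrant $A$ the functor $\operatorname{Map}_{\mathcal{SS}\mathrm{ets}}(A, -)$ sends acyclic fibrations to acyclic fibrations; Ken Brown's lemma then upgrades this to the statement that it sends weak equivalences between fibrant objects to weak equivalences, which applies to $\eta_X$ and finishes the proof. The step I expect to require the most care is the natural homeomorphism $|A \times \Delta[n]| \cong |A| \times |\Delta[n]|$, which rests on working in a convenient (compactly generated) category of spaces so that geometric realization preserves finite products; tracking its naturality is precisely what guarantees that $\Phi$ is a well-defined simplicial map compatible with the adjunction isomorphism.
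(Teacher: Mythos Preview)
The paper does not supply its own proof of this proposition: it is quoted verbatim from \cite[Proposition 1.1.11]{Hirschhorn2003} and used as a black box. There is therefore no argument in the paper to compare yours against.

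That said, your proof is correct and is essentially the standard argument (and, in spirit, the one Hirschhorn gives). The identification of the comparison map with $\operatorname{Map}_{\mathcal{SS}\mathrm{ets}}(A,\eta_X)$ via the realization--singular complex adjunction is exactly the right move, and your use of the triangle identity together with Theorem~\ref{quilleneqsecretly} to see that $\eta_X$ is a weak equivalence is clean. Your final step---deducing that $\operatorname{Map}_{\mathcal{SS}\mathrm{ets}}(A,-)$ preserves weak equivalences between Kan complexes from SM7 and Ken Brown---is precisely the content of what the paper later records as Lemma~\ref{citeadnauseam2}; since that lemma appears after the proposition you are proving, it is appropriate that you argued it directly rather than citing it. One minor remark: the product compatibility $|A\times\Delta[n]|\cong |A|\times|\Delta[n]|$ does not actually require compactly generated spaces here, because $\Delta[n]$ has only finitely many nondegenerate simplices and $|\Delta[n]|$ is compact; the general Milnor theorem is overkill for this particular instance.
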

The following lemma also underpins our arguments in this section.

\begin{lemma}[{\cite[Corollary 9.3.3(2)]{Hirschhorn2003}}]\label{citeadnauseam} If $f \colon X \rightarrow Y$ is a weak homotopy equivalence of CW complexes and $Z$ is a topological space, then $\operatorname{Map}_{\mathcal{T}\mathrm{op}}(f, Z) \colon \operatorname{Map}_{\mathcal{T}\mathrm{op}}(Y, Z) \rightarrow \operatorname{Map}_{\mathcal{T}\mathrm{op}}(X, Z)$ is a weak equivalence.
\end{lemma}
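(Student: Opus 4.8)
The plan is to reduce the assertion to a statement about honest homotopy equivalences and then to exploit the simplicial enrichment of the function-complex functor. First I would invoke Whitehead's theorem: since $f \colon X \rightarrow Y$ is a weak homotopy equivalence between CW complexes, it is in fact a homotopy equivalence, so there is a map $g \colon Y \rightarrow X$ together with homotopies $g \circ f \simeq \operatorname{id}_{X}$ and $f \circ g \simeq \operatorname{id}_{Y}$. This trades the purely homotopy-theoretic hypothesis for the presence of a genuine two-sided inverse up to homotopy, which is exactly the input that makes the induced map on function complexes tractable.

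The central observation is that the contravariant functor $\operatorname{Map}_{\mathcal{T}\mathrm{op}}(-, Z)$ carries homotopic maps to simplicially homotopic maps. Concretely, if $\alpha, \beta \colon W \rightarrow W'$ are connected by a topological homotopy $H \colon W \times |\Delta[1]| \rightarrow W'$, then, unwinding the definition of the function complex with $n$-simplices $\operatorname{Hom}_{\mathcal{T}\mathrm{op}}(W \times |\Delta[n]|, Z)$, the homotopy $H$ assembles into a simplicial homotopy between $\operatorname{Map}_{\mathcal{T}\mathrm{op}}(\alpha, Z)$ and $\operatorname{Map}_{\mathcal{T}\mathrm{op}}(\beta, Z)$. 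Granting this, applying $\operatorname{Map}_{\mathcal{T}\mathrm{op}}(-, Z)$ to the two homotopies above gives $\operatorname{Map}_{\mathcal{T}\mathrm{op}}(g, Z) \circ \operatorname{Map}_{\mathcal{T}\mathrm{op}}(f, Z) \simeq \operatorname{id}$ and $\operatorname{Map}_{\mathcal{T}\mathrm{op}}(f, Z) \circ \operatorname{Map}_{\mathcal{T}\mathrm{op}}(g, Z) \simeq \operatorname{id}$, exhibiting $\operatorname{Map}_{\mathcal{T}\mathrm{op}}(f, Z)$ as a simplicial homotopy equivalence. Since geometric realization preserves finite products, these simplicial homotopies realize to topological ones, so $|\operatorname{Map}_{\mathcal{T}\mathrm{op}}(f, Z)|$ is a homotopy equivalence; hence $\operatorname{Map}_{\mathcal{T}\mathrm{op}}(f, Z)$ is a weak homotopy equivalence, as desired.

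The step I expect to be the main obstacle is the central observation itself, namely that a topological homotopy induces a simplicial homotopy on function complexes. Making this precise requires producing, from $H$, a simplicial map $\operatorname{Map}_{\mathcal{T}\mathrm{op}}(W', Z) \times \Delta[1] \rightarrow \operatorname{Map}_{\mathcal{T}\mathrm{op}}(W, Z)$ that restricts to $\operatorname{Map}_{\mathcal{T}\mathrm{op}}(\alpha, Z)$ and $\operatorname{Map}_{\mathcal{T}\mathrm{op}}(\beta, Z)$ on the two endpoints; the bookkeeping here is the enriched-functoriality content of the lemma, and it is where one must track the cartesian products and the defining adjunction with care.

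Finally, I would note a more abstract alternative. CW complexes are cofibrant and every topological space is fibrant in the Quillen model structure on $\mathcal{T}\mathrm{op}$, which is a simplicial model category; the statement is then the mapping-space instance of the general principle that a weak equivalence between cofibrant objects induces a weak equivalence on derived mapping spaces into a fibrant target, which follows from the pushout-product axiom together with Ken Brown's lemma. This is the formulation in which the cited \cite[Corollary 9.3.3(2)]{Hirschhorn2003} records the result, but the concrete Whitehead argument has the advantage of being self-contained and of not requiring the topological model structure to be set up explicitly.
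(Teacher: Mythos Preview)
Your argument is correct. The Whitehead step is valid, and the claim that a topological homotopy $H \colon W \times |\Delta[1]| \rightarrow W'$ induces a simplicial homotopy on function complexes follows from the natural isomorphism $\operatorname{Map}_{\mathcal{T}\mathrm{op}}(W \times |\Delta[1]|, Z) \cong \operatorname{Map}_{\mathcal{T}\mathrm{op}}(W, Z)^{\Delta[1]}$, which in turn rests on $|\Delta[n] \times \Delta[1]| \cong |\Delta[n]| \times |\Delta[1]|$; so the bookkeeping you flag as the main obstacle does go through.

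That said, there is nothing to compare against: the paper does not supply its own proof of this lemma but simply imports it as \cite[Corollary 9.3.3(2)]{Hirschhorn2003}. In Hirschhorn the statement is proved in the generality of an arbitrary simplicial model category, using exactly the abstract route you sketch in your final paragraph (cofibrant source objects, fibrant target, Ken Brown's lemma applied to the enriched hom). Your concrete Whitehead-based argument is a specialization of that proof to $\mathcal{T}\mathrm{op}$, trading the model-categorical machinery for an explicit homotopy inverse; it is self-contained and entirely adequate here, while the Hirschhorn formulation is what the paper needs elsewhere (e.g.\ Lemma~\ref{citeadnauseam2}) where no Whitehead theorem is available.
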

We show that a simplicial set is $T_{M^{-1}\mathbb{Z}}$-local if and only if it is an $M$-local Kan complex.

\begin{proposition}\label{charthelocalsnonsc} Let $M$ be a multiplicative subset of $\mathbb{Z}$. Let $K$ be a Kan complex. The following statements are equivalent.

\begin{enumerate}

\item The map $\operatorname{Map}_{\mathcal{SS}\mathrm{ets}}(\operatorname{Sing}(S^{n}_{M^{-1}\mathbb{Z}}), K) \rightarrow \operatorname{Map}_{\mathcal{SS}\mathrm{ets}}(\operatorname{Sing}(S^{n}), K)$ is a weak equivalence for all $n \geq 2$.

\item The map $\operatorname{Map}_{\mathcal{T}\mathrm{op}}(|\operatorname{Sing}(S^{n}_{M^{-1}\mathbb{Z}})|, |K|) \rightarrow \operatorname{Map}_{\mathcal{T}\mathrm{op}}(|\operatorname{Sing}(S^{n})|, |K|)$ is a weak equivalence for all $n \geq 2$.

\item The map $\operatorname{Map}_{\mathcal{T}\mathrm{op}}(S^{n}_{M^{-1}\mathbb{Z}}, |K|) \rightarrow \operatorname{Map}_{\mathcal{T}\mathrm{op}}(S^{n}, |K|)$ is a weak equivalence for all $n \geq 2$.

\item The space $|K|$ is $M$-local.

\end{enumerate}

\end{proposition}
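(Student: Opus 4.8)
The plan is to prove the equivalence of the four statements by establishing a cycle of implications, relying on the machinery already assembled. The natural order is $(1) \Leftrightarrow (2)$, then $(2) \Leftrightarrow (3)$, and finally $(3) \Leftrightarrow (4)$, since each pair connects adjacent formulations that differ by exactly one translation between categories or by the application of a single cited result.

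First I would handle $(1) \Leftrightarrow (2)$. Since $K$ is a Kan complex and both $\operatorname{Sing}(S^{n})$ and $\operatorname{Sing}(S^{n}_{M^{-1}\mathbb{Z}})$ are arbitrary simplicial sets serving as the source, Proposition \ref{pshhelpmenonsc} supplies natural weak homotopy equivalences $\operatorname{Map}_{\mathcal{SS}\mathrm{ets}}(A, K) \rightarrow \operatorname{Map}_{\mathcal{T}\mathrm{op}}(|A|, |K|)$ for $A = \operatorname{Sing}(S^{n})$ and $A = \operatorname{Sing}(S^{n}_{M^{-1}\mathbb{Z}})$. These assemble into a commutative square whose two horizontal legs are the maps in $(1)$ and $(2)$ and whose vertical legs are weak homotopy equivalences; by the two-out-of-three property, the top map is a weak homotopy equivalence if and only if the bottom one is. This step is essentially formal once naturality is invoked.

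Next I would treat $(2) \Leftrightarrow (3)$. The counit maps $|\operatorname{Sing}(S^{n})| \rightarrow S^{n}$ and $|\operatorname{Sing}(S^{n}_{M^{-1}\mathbb{Z}})| \rightarrow S^{n}_{M^{-1}\mathbb{Z}}$ are weak homotopy equivalences by Theorem \ref{quilleneqsecretly}. Since $S^{n}$ and $S^{n}_{M^{-1}\mathbb{Z}}$ are CW complexes and the geometric realizations are CW complexes as well, Lemma \ref{citeadnauseam} lets me conclude that precomposition with each counit induces a weak homotopy equivalence on the relevant function complexes into $|K|$. Arranging these into another commutative square and again applying two-out-of-three identifies $(2)$ with $(3)$. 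I should take care that the square here is built from the naturality of the restriction maps along the counit, so that the horizontal legs are exactly the two maps being compared.

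Finally, $(3) \Leftrightarrow (4)$ is immediate from Proposition \ref{illdothisonenonsc}: that proposition states precisely that $|K|$ is $M$-local if and only if $\operatorname{Map}_{\mathcal{T}\mathrm{op}}(S^{n}_{M^{-1}\mathbb{Z}}, |K|) \rightarrow \operatorname{Map}_{\mathcal{T}\mathrm{op}}(S^{n}, |K|)$ is a weak homotopy equivalence for every $n \geq 2$, which is the content of $(3)$. I expect the main obstacle to be purely bookkeeping rather than conceptual: I must verify that in each of the two commutative squares the horizontal maps genuinely coincide with the restriction maps named in the statement, and that all four corners are objects to which the cited results apply (in particular, confirming that the sources in Lemma \ref{citeadnauseam} are CW complexes and that $K$'s being a Kan complex is what licenses Proposition \ref{pshhelpmenonsc}). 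No single step is hard, so the care lies in assembling the diagrams correctly and checking that naturality makes them commute.
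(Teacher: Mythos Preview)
Your proposal is correct and follows exactly the paper's own route: the paper proves $(1)\Leftrightarrow(2)$ via Proposition~\ref{pshhelpmenonsc}, $(2)\Leftrightarrow(3)$ via Theorem~\ref{quilleneqsecretly} and Lemma~\ref{citeadnauseam}, and $(3)\Leftrightarrow(4)$ via Proposition~\ref{illdothisonenonsc}. You have simply spelled out the naturality squares and two-out-of-three arguments that the paper leaves implicit.
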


\begin{proof} Firstly, (1) and (2) are equivalent by Proposition \ref{pshhelpmenonsc}. Then, (2) and (3) are equivalent by Theorem \ref{quilleneqsecretly} and Lemma \ref{citeadnauseam}. Lastly, (3) and (4) are equivalent by Proposition \ref{illdothisonenonsc}.
\end{proof}
Proposition \ref{charthelocalsnonsc} describes the left Bousfield localization of ${\mathcal{SS}\mathrm{ets}}$ at the set $T_{M^{-1}\mathbb{Z}}$.

\begin{theorem}\label{ididitididitnonsc} Let $M$ be a multiplicative subset of $\mathbb{Z}$. There is a combinatorial, left proper, and simplicial model structure $\mathcal{L}_{M^{-1}\mathbb{Z}}\mathcal{SS}\mathrm{ets}$ on the category of simplicial sets in which a map $f$ is

\begin{enumerate}

\item a weak equivalence if and only if, for every $M$-local Kan complex $K$, the map $\operatorname{Map}_{\mathcal{SS}\mathrm{ets}}(f, K)$ is a weak equivalence; and

\item a cofibration if and only if $f$ is a monomorphism.

\end{enumerate}
A simplicial set $K$ is fibrant in $\mathcal{L}_{M^{-1}\mathbb{Z}}\mathcal{SS}\mathrm{ets}$ if and only if $K$ is an $M$-local Kan complex.
\end{theorem}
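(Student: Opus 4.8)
The plan is to apply Theorem \ref{lblexists} to the model category ${\mathcal{SS}\mathrm{ets}}$ from Example \ref{defmodelcatdef1}, localized at the set $T_{M^{-1}\mathbb{Z}}$, and then identify the resulting fibrant objects and weak equivalences using Proposition \ref{charthelocalsnonsc}. First I would verify the hypotheses of Theorem \ref{lblexists}: the model category ${\mathcal{SS}\mathrm{ets}}$ is combinatorial, left proper, and simplicial, all of which are recorded in the discussion following Example \ref{defmodelcatdef1}. Since $T_{M^{-1}\mathbb{Z}}$ is a set of morphisms (indexed by the integers $n \geq 2$), Theorem \ref{lblexists} immediately produces the left Bousfield localization $\mathcal{L}_{T_{M^{-1}\mathbb{Z}}}{\mathcal{SS}\mathrm{ets}}$, which I rename $\mathcal{L}_{M^{-1}\mathbb{Z}}\mathcal{SS}\mathrm{ets}$. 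This localization is automatically combinatorial, left proper, and simplicial, and its cofibrations are precisely the cofibrations of ${\mathcal{SS}\mathrm{ets}}$, namely the monomorphisms. This establishes the combinatorial, left proper, simplicial claim and part (2) directly.

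The substantive identification is to show that the $T_{M^{-1}\mathbb{Z}}$-local objects coincide with the $M$-local Kan complexes. By Definition \ref{deflblsdef}(1), an object $K$ is $T_{M^{-1}\mathbb{Z}}$-local if and only if $K$ is fibrant in ${\mathcal{SS}\mathrm{ets}}$, i.e.\ a Kan complex, and for every $n \geq 2$ the map $\operatorname{Map}_{\mathcal{SS}\mathrm{ets}}(\operatorname{Sing}(S^{n}_{M^{-1}\mathbb{Z}}), K) \rightarrow \operatorname{Map}_{\mathcal{SS}\mathrm{ets}}(\operatorname{Sing}(S^{n}), K)$ is a weak homotopy equivalence. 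For a Kan complex $K$, this last condition is exactly statement (1) of Proposition \ref{charthelocalsnonsc}, which that proposition shows is equivalent to statement (4), namely that $|K|$ is $M$-local. Thus a Kan complex is $T_{M^{-1}\mathbb{Z}}$-local if and only if it is $M$-local in the sense of Definition \ref{defrhtssetsnonscdef}. Combining this with the characterization of fibrant objects in Theorem \ref{lblexists} yields the last sentence of the statement: the fibrant objects of $\mathcal{L}_{M^{-1}\mathbb{Z}}\mathcal{SS}\mathrm{ets}$ are exactly the $M$-local Kan complexes.

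It remains to rewrite the weak equivalences in the form demanded by part (1). By Theorem \ref{lblexists}(1), the weak equivalences of the localization are the $T_{M^{-1}\mathbb{Z}}$-local equivalences, and by Definition \ref{deflblsdef}(2) a map $f$ is a $T_{M^{-1}\mathbb{Z}}$-local equivalence if and only if $\operatorname{Map}_{\mathcal{SS}\mathrm{ets}}(f, K)$ is a weak homotopy equivalence for every $T_{M^{-1}\mathbb{Z}}$-local object $K$. Since the $T_{M^{-1}\mathbb{Z}}$-local objects are precisely the $M$-local Kan complexes by the previous paragraph, this is exactly the condition stated in part (1). This completes all three claims.

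I do not expect a serious obstacle here, since the theorem is essentially a packaging of Theorem \ref{lblexists} with the equivalence (1) $\Leftrightarrow$ (4) of Proposition \ref{charthelocalsnonsc}; the only point requiring minor care is to remember that being $T$-local includes being fibrant in the ambient model structure, so that one really does restrict to Kan complexes before invoking Proposition \ref{charthelocalsnonsc}, whose hypothesis assumes $K$ is a Kan complex. Everything else is a matter of unwinding definitions.
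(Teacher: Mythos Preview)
Your proposal is correct and matches the paper's approach exactly: the paper presents this theorem as an immediate consequence of Proposition \ref{charthelocalsnonsc} together with the general left Bousfield localization machinery of Theorem \ref{lblexists}, and you have spelled out precisely that deduction. The only minor point of care you flagged---that $T$-locality already requires being a Kan complex before invoking Proposition \ref{charthelocalsnonsc}---is handled correctly.
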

In particular, we get a model category whose fibrant objects are the rational Kan complexes.

\section{Simplicial spaces and complete Segal spaces}\label{section4}
To apply our work in Section \ref{section3} to the study of $(\infty, 1)$-categories, we review the first model for $(\infty, 1)$-categories that we use in this paper, which is that of complete Segal spaces. Complete Segal spaces are simplicial spaces satisfying a condition that captures morphism composition up to homotopy. After giving some background on simplicial spaces and their Reedy model structure, we recall Rezk's cartesian model category for complete Segal spaces, which is constructed as a left Bousfield localization of the Reedy model structure on simplicial spaces.

\subsection{Simplicial spaces}\label{section4.1} We collect some background on simplicial spaces and their Reedy model structure.

\begin{definition}\label{defsimpobjmodeldef} A \emph{simplicial space} is a functor $X \colon \Delta^{\mathrm{op}} \rightarrow \mathcal{SS}\mathrm{ets}$.
\end{definition}
We write $\mathcal{SS}\mathrm{ets}^{\Delta^{\mathrm{op}}}$ for the category of simplicial spaces and their simplicial maps (natural transformations).

\begin{example}\label{defsimpobjmodeldef1} We can view a simplicial space $X \colon \Delta^{\mathrm{op}} \rightarrow \mathcal{SS}\mathrm{ets}$ as a bisimplicial set $X \colon \Delta^{\mathrm{op}} \times \Delta^{\mathrm{op}} \rightarrow \mathcal{S}\mathrm{ets}$. Thus, given a simplicial set $Z \colon \Delta^{\mathrm{op}} \rightarrow \mathcal{S}\mathrm{ets}$, we can view it as a simplicial space in two ways.

\begin{enumerate}

\item The \emph{transpose} simplicial space $Z^{t}$ is the composite of functors $\Delta^{\mathrm{op}} \times \Delta^{\mathrm{op}} \xrightarrow{\operatorname{pr}_{1}} \Delta^{\mathrm{op}} \xrightarrow{Z} \mathcal{S}\mathrm{ets}$, where $\operatorname{pr}_{1}$ denotes the projection to the first coordinate. Note that $(Z^{t})_{n}=Z_{n}$ is discrete for every $n \geq 0$.

\item The \emph{constant} simplicial space $Z$ is the composite of functors $\Delta^{\mathrm{op}} \times \Delta^{\mathrm{op}} \xrightarrow{\operatorname{pr}_{2}} \Delta^{\mathrm{op}} \xrightarrow{Z} \mathcal{S}\mathrm{ets}$, where $\operatorname{pr}_{2}$ denotes the projection to the second coordinate.
\end{enumerate}

\end{example}
In the Reedy model structure on $\mathcal{SS}\mathrm{ets}^{\Delta^{\mathrm{op}}}$, the weak equivalences are the levelwise weak equivalences, the cofibrations are the monomorphisms, and the fibrations are described using coskeleta; see \cite[Theorem A]{Reedy1974} and {\cite[Theorems 2.6.6 and 2.6.10]{Bergner2018}}. This model structure is combinatorial, left proper, and simplicial; in particular, given two simplicial spaces $X$ and $Y$, we define their function complex $\operatorname{Map}_{\mathcal{SS}\mathrm{ets}^{\Delta^{\mathrm{op}}}}(X, Y)$ by \[\operatorname{Map}_{\mathcal{SS}\mathrm{ets}^{\Delta^{\mathrm{op}}}}(X, Y)_{n}=\operatorname{Hom}_{\mathcal{SS}\mathrm{ets}^{\Delta^{\mathrm{op}}}}(X \times \Delta[n], Y).\]
Then, applying the Yoneda lemma to Example \ref{defsimpobjmodeldef1} yields the following useful result for simplicial spaces.

\begin{proposition}\label{yonedahelpsus} Let $X$ be a simplicial space. Let $n$ be a non-negative integer. There is a natural isomorphism \[X_{n} \cong \operatorname{Map}_{\mathcal{SS}\mathrm{ets}^{\Delta^{\mathrm{op}}}}(\Delta[n]^{t}, X).\]
\end{proposition}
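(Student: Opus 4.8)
The plan is to unwind both sides of the claimed isomorphism using the definitions and then invoke the Yoneda lemma. Recall that for a simplicial space $X$, its set of $n$-simplices $X_n$ is itself a simplicial set; concretely, the $k$-simplices of $X_n$ form the set $X_{n,k}$ when we view $X$ as a bisimplicial set $X \colon \Delta^{\mathrm{op}} \times \Delta^{\mathrm{op}} \to \mathcal{S}\mathrm{ets}$, as in Example \ref{defsimpobjmodeldef1}. So the real content is to identify the simplicial set $\operatorname{Map}_{\mathcal{SS}\mathrm{ets}^{\Delta^{\mathrm{op}}}}(\Delta[n]^{t}, X)$ level by level with $X_n$ and to check that the identification is natural and compatible with the simplicial structure in both the $\Delta[n]$ and $X$ variables.

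First I would compute the $k$-simplices of the function complex. By the definition of the function complex on $\mathcal{SS}\mathrm{ets}^{\Delta^{\mathrm{op}}}$ recalled just above, we have
\[
\operatorname{Map}_{\mathcal{SS}\mathrm{ets}^{\Delta^{\mathrm{op}}}}(\Delta[n]^{t}, X)_{k} = \operatorname{Hom}_{\mathcal{SS}\mathrm{ets}^{\Delta^{\mathrm{op}}}}\!\left(\Delta[n]^{t} \times \Delta[k], X\right),
\]
where I read $\Delta[k]$ here as the constant simplicial space on the simplicial set $\Delta[k]$, per Example \ref{defsimpobjmodeldef1}(2). Viewing everything as bisimplicial sets, the representable $\Delta[n]^{t} \times \Delta[k]$ is the external product $\Delta[n] \boxtimes \Delta[k]$, i.e.\ the bisimplicial set sending $([p],[q])$ to $\operatorname{Hom}_{\Delta}([p],[n]) \times \operatorname{Hom}_{\Delta}([q],[k])$. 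This is exactly the bi-representable functor on the product category $\Delta \times \Delta$, so the bisimplicial Yoneda lemma gives a natural bijection
\[
\operatorname{Hom}_{\mathcal{SS}\mathrm{ets}^{\Delta^{\mathrm{op}}}}\!\left(\Delta[n] \boxtimes \Delta[k], X\right) \cong X_{n,k} = (X_{n})_{k}.
\]

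Next I would assemble these levelwise bijections into an isomorphism of simplicial sets. The face and degeneracy maps on the $k$-variable of the function complex are induced by the coface and codegeneracy maps $\Delta[k'] \to \Delta[k]$, and under the Yoneda identification these correspond precisely to the simplicial operators acting on the simplicial set $X_n$ in its own second coordinate; this is a direct consequence of the naturality clause in the Yoneda lemma applied in the second $\Delta$-factor. Hence the bijections are compatible with the simplicial structure and yield an isomorphism $\operatorname{Map}_{\mathcal{SS}\mathrm{ets}^{\Delta^{\mathrm{op}}}}(\Delta[n]^{t}, X) \cong X_n$ of simplicial sets. Finally, naturality in $X$ (and in $n$) follows because the Yoneda bijection is natural in the target object and in the representing object, so no separate argument is needed there.

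The only genuinely delicate point, and the step I expect to require the most care, is the bookkeeping in the second paragraph: correctly identifying the bifunctor $\Delta[n]^{t} \times \Delta[k]$ as the external product $\Delta[n] \boxtimes \Delta[k]$ and confirming that it is the representable object at $([n],[k]) \in \Delta \times \Delta$, so that the one-variable Yoneda lemma can be upgraded to the bisimplicial setting. Everything else—the levelwise bijection and the compatibility with simplicial operators—is a formal and routine consequence of Yoneda's naturality, exactly as the phrase ``applying the Yoneda lemma to Example \ref{defsimpobjmodeldef1}'' in the text suggests.
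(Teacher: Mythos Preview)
Your proposal is correct and is exactly the argument the paper has in mind: the text does not give a separate proof but simply says ``applying the Yoneda lemma to Example~\ref{defsimpobjmodeldef1},'' which is precisely your identification of $\Delta[n]^{t}\times\Delta[k]$ with the bi-representable $\Delta[n]\boxtimes\Delta[k]$ followed by the bisimplicial Yoneda lemma. Your careful check of naturality in $k$, $n$, and $X$ just spells out what the paper leaves implicit.
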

In particular, we note the following consequence of Proposition \ref{yonedahelpsus} for later use.

\begin{corollary}\label{usein8.2} If $X$ is a simplicial space and $Z$ is a simplicial set, then there is a natural isomorphism \[\operatorname{Map}_{\mathcal{SS}\mathrm{ets}^{\Delta^{\mathrm{op}}}}(\Delta[n]^{t} \times Z, X) \cong \operatorname{Map}_{\mathcal{SS}\mathrm{ets}}(Z, X_{n}).\]
\end{corollary}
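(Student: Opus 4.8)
The plan is to deduce the isomorphism from Proposition~\ref{yonedahelpsus} together with the tensor--hom adjunction supplied by the simplicial structure on $\mathcal{SS}\mathrm{ets}^{\Delta^{\mathrm{op}}}$. Here $Z$ denotes the constant simplicial space of Example~\ref{defsimpobjmodeldef1}, so $\Delta[n]^{t} \times Z$ is the levelwise product with $(\Delta[n]^{t} \times Z)_{m}=\Delta[n]_{m} \times Z$, where $\Delta[n]_{m}$ is a discrete simplicial set. The first step is to recognize this product as the simplicial tensoring of $\Delta[n]^{t}$ by the simplicial set $Z$. Indeed, since the function complex is computed by $\operatorname{Map}_{\mathcal{SS}\mathrm{ets}^{\Delta^{\mathrm{op}}}}(W, X)_{k}=\operatorname{Hom}_{\mathcal{SS}\mathrm{ets}^{\Delta^{\mathrm{op}}}}(W \times \Delta[k], X)$ with $\Delta[k]$ regarded as a constant simplicial space, unwinding the defining adjunction of the tensor shows $W \otimes K = W \times c(K)$ for every simplicial set $K$, where $c(K)$ denotes the constant simplicial space on $K$. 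In particular $\Delta[n]^{t} \times Z = \Delta[n]^{t} \otimes Z$.

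With this identification in hand, the second step applies the enriched tensor--hom adjunction that is part of the simplicial model structure: for any simplicial set $K$ and simplicial spaces $W$ and $X$ there is a natural isomorphism of simplicial sets
\[\operatorname{Map}_{\mathcal{SS}\mathrm{ets}^{\Delta^{\mathrm{op}}}}(W \otimes K, X) \cong \operatorname{Map}_{\mathcal{SS}\mathrm{ets}}\!\left(K, \operatorname{Map}_{\mathcal{SS}\mathrm{ets}^{\Delta^{\mathrm{op}}}}(W, X)\right).\]
Taking $W = \Delta[n]^{t}$ and $K = Z$ gives
\[\operatorname{Map}_{\mathcal{SS}\mathrm{ets}^{\Delta^{\mathrm{op}}}}(\Delta[n]^{t} \times Z, X) \cong \operatorname{Map}_{\mathcal{SS}\mathrm{ets}}\!\left(Z, \operatorname{Map}_{\mathcal{SS}\mathrm{ets}^{\Delta^{\mathrm{op}}}}(\Delta[n]^{t}, X)\right).\]
The third step is to substitute the natural isomorphism $\operatorname{Map}_{\mathcal{SS}\mathrm{ets}^{\Delta^{\mathrm{op}}}}(\Delta[n]^{t}, X) \cong X_{n}$ from Proposition~\ref{yonedahelpsus} into the inner function complex, which yields the claimed isomorphism $\operatorname{Map}_{\mathcal{SS}\mathrm{ets}^{\Delta^{\mathrm{op}}}}(\Delta[n]^{t} \times Z, X) \cong \operatorname{Map}_{\mathcal{SS}\mathrm{ets}}(Z, X_{n})$. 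Naturality in both $X$ and $Z$ is inherited from the naturality of each isomorphism used.

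As a sanity check and an alternative route, one can verify the statement directly on $k$-simplices: a $k$-simplex of the left-hand side is a map $(\Delta[n]^{t} \times Z) \times \Delta[k] \to X$ of simplicial spaces, which by associativity of products is a map $\Delta[n]^{t} \times (Z \times \Delta[k]) \to X$; the co-Yoneda property of $\Delta[n]^{t}$ (the same computation underlying Proposition~\ref{yonedahelpsus}) identifies such maps with maps $Z \times \Delta[k] \to X_{n}$ of simplicial sets, that is, with $k$-simplices of $\operatorname{Map}_{\mathcal{SS}\mathrm{ets}}(Z, X_{n})$.

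I expect the only real obstacle to be bookkeeping rather than mathematics: one must keep the two simplicial directions separate and confirm that the product $\Delta[n]^{t} \times Z$ appearing in the statement is the tensoring by $Z$ viewed as a constant simplicial space, and not by $Z^{t}$. It is precisely this identification that makes the inner function complex in the tensor--hom adjunction evaluate to $\operatorname{Map}_{\mathcal{SS}\mathrm{ets}^{\Delta^{\mathrm{op}}}}(\Delta[n]^{t}, X)$, and hence to $X_{n}$. Once these conventions are pinned down, each step is a formal consequence of the cited results.
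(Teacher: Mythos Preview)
Your proof is correct and takes essentially the same approach as the paper, which records the statement as an immediate consequence of Proposition~\ref{yonedahelpsus} without further argument. Your use of the simplicial tensor--hom adjunction together with Proposition~\ref{yonedahelpsus} simply fills in the details the paper leaves implicit, and your direct verification on $k$-simplices is an equally valid alternative.
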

A property of the Reedy model structure on simplicial spaces that is relevant to our work is that it is cartesian \cite[Example 2.9.4]{Bergner2018}. In particular, given two simplicial spaces $X$ and $Y$, we have a simplicial space $Y^{X}$, and if $Y$ is Reedy fibrant, then so is $Y^{X}$. Although the localization of a cartesian model category need not be cartesian, we have a convenient criterion for localizations of the Reedy model structure on simplicial spaces.

\begin{proposition}[{\cite[Proposition 9.2]{Rezk2001}}]\label{rezkcriterionprop} Let $T$ be a set of simplicial maps. Suppose that, for every $T$-local simplicial space $K$, the simplicial space $K^{\Delta[1]^{t}}$ is $T$-local. Then, the left Bousfield localization $\mathcal{L}_{T}\mathcal{SS}\mathrm{ets}^{\Delta^{\mathrm{op}}}$ of the Reedy model structure on $\mathcal{SS}\mathrm{ets}^{\Delta^{\mathrm{op}}}$ at the set $T$ is cartesian.
\end{proposition}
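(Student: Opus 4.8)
The plan is to verify that $\mathcal{L}_{T}\mathcal{SS}\mathrm{ets}^{\Delta^{\mathrm{op}}}$ satisfies the pushout-product axiom for the cartesian product, which is what it means for the localization to be cartesian. Since the cofibrations of $\mathcal{L}_{T}\mathcal{SS}\mathrm{ets}^{\Delta^{\mathrm{op}}}$ are exactly the Reedy cofibrations and the Reedy structure is already cartesian, the pushout-product of two cofibrations is again a cofibration; so the entire content is the acyclic case. I would first isolate the reduction lemma that \emph{it suffices to show $-\times C$ sends $T$-local equivalences to $T$-local equivalences for every simplicial space $C$.} Indeed, let $i \colon A \to B$ be a cofibration and $j \colon C \to D$ a $T$-local acyclic cofibration; under this hypothesis $j \times \operatorname{id}_{A}$ and $j \times \operatorname{id}_{B}$ are $T$-local equivalences and monomorphisms, hence $T$-local acyclic cofibrations. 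In the pushout defining the pushout-product, the map $B \times C \to (B\times C)\cup_{A\times C}(A\times D)$ is a pushout of $j \times \operatorname{id}_{A}$ and so is a $T$-local acyclic cofibration; its composite with the pushout-product map $(B\times C)\cup_{A\times C}(A\times D) \to B\times D$ is $j \times \operatorname{id}_{B}$, so two-out-of-three forces the pushout-product to be a $T$-local equivalence, and it is a cofibration as a pushout-product of cofibrations.

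It remains to prove that $-\times C$ preserves $T$-local equivalences for every $C$. Let $\mathcal{C}$ be the class of such simplicial spaces. Using the cartesian-closed adjunction $\operatorname{Map}(X \times C, W)\cong\operatorname{Map}(X, W^{C})$, a map $f \times \operatorname{id}_{C}$ induces $\operatorname{Map}(f, W^{C})$ on function complexes, so $C\in\mathcal{C}$ if and only if $W^{C}$ is $T$-local for every $T$-local $W$. In particular the hypothesis of the proposition is exactly the statement $\Delta[1]^{t}\in\mathcal{C}$, and $\Delta[0]^{t}=\ast\in\mathcal{C}$ trivially. I would then record that $\mathcal{C}$ is closed under finite products (compose $-\times C$ with $-\times C'$), arbitrary coproducts (coproducts of $T$-local equivalences are $T$-local equivalences), pushouts along cofibrations (the gluing lemma, valid since $\mathcal{L}_{T}\mathcal{SS}\mathrm{ets}^{\Delta^{\mathrm{op}}}$ is left proper by Theorem~\ref{lblexists}), transfinite composites along cofibrations (since the model structure is combinatorial by Theorem~\ref{lblexists}), and retracts. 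Moreover every constant simplicial set $\Delta[q]$ lies in $\mathcal{C}$, because tensoring with $\Delta[q]$ is left Quillen for the simplicial structure and all objects are cofibrant.

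The crux is to pass from $\Delta[1]^{t}$ to every $\Delta[p]^{t}$. Here I would use the combinatorial fact that $[p]$ is a retract of $[1]^{\times p}$ in the category of posets, via the order-preserving maps $i \mapsto \{1,\dots,i\}$ and $S \mapsto |S|$. Applying the nerve, which preserves products, exhibits $\Delta[p]$ as a retract of $\Delta[1]^{\times p}$; applying the transpose $(-)^{t}$, which preserves products and retracts, exhibits $\Delta[p]^{t}$ as a retract of $(\Delta[1]^{t})^{\times p}$. As $\Delta[1]^{t}\in\mathcal{C}$ and $\mathcal{C}$ is closed under products and retracts, we get $\Delta[p]^{t}\in\mathcal{C}$, whence each representable $\Delta[p]^{t}\times\Delta[q]\in\mathcal{C}$ and, inductively, each boundary $\partial(\Delta[p]^{t}\times\Delta[q])\in\mathcal{C}$. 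Finally, the skeletal filtration writes an arbitrary simplicial space as a transfinite composite of pushouts of coproducts of these boundary inclusions, so the closure properties give $\mathcal{C}=\mathcal{SS}\mathrm{ets}^{\Delta^{\mathrm{op}}}$, and the reduction of the first paragraph then yields the pushout-product axiom.

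I expect the main obstacle to be this last retract argument: it is the device that lets a hypothesis about the single object $\Delta[1]^{t}$ propagate to all simplices, and hence—through the cell structure—to all simplicial spaces. A secondary point requiring care is confirming that $\mathcal{C}$ is closed under pushouts along cofibrations and transfinite composites; this is precisely where the left properness and combinatoriality of the localized model structure supplied by Theorem~\ref{lblexists} are indispensable, and one must keep track that each cell-attachment has all three corners of its defining span already in $\mathcal{C}$ before invoking the gluing lemma.
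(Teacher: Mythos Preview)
The paper does not supply its own proof of this proposition; it is quoted as a black box with a citation to \cite[Proposition 9.2]{Rezk2001}. Your outline is correct and in fact reproduces Rezk's original argument, including the key device of exhibiting $\Delta[p]^{t}$ as a retract of $(\Delta[1]^{t})^{\times p}$, which is precisely how Rezk propagates the hypothesis on $\Delta[1]^{t}$ to all representables.
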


\subsection{Complete Segal spaces}\label{section4.2} We give an overview of complete Segal spaces from \cite[Chapter 5]{Bergner2018} and \cite{Rezk2001}.

Let $n \geq 2$ and $0 \leq i \leq n-1$. We define the order-preserving map $\alpha^{i} \colon [1] \rightarrow [n]$ by $\alpha^{i}(x)=i+x$. Let \[G(n)=\bigcup_{i=0}^{n-1} \alpha^{i}(\Delta[1]) \subseteq \Delta[n].\] Then, for a simplicial space $W$, Proposition \ref{yonedahelpsus} implies that the inclusion $G(n) \rightarrow \Delta[n]$ induces a map \[\varphi_{n} \colon W_{n} \rightarrow \underbrace{W_{1} \underset{W_{0}}{\times} W_{1} \underset{W_{0}}{\times} \cdots \underset{W_{0}}{\times} W_{1}}_{n},\]
which is called the \emph{$n$-th Segal map} of $W$.

\begin{definition}[{\cite[\S 4.1]{Rezk2001}}]\label{segalobjectdef} A simplicial space $W$ is a \emph{Segal space} if $W$ is Reedy fibrant and, for every $n \geq 2$, its $n$-th Segal map is a weak equivalence.
\end{definition}
The categorical interpretation of Segal spaces is distilled in the following definition.

\begin{definition}[{\cite[\S 5.1]{Rezk2001}}]\label{defobjmapdef} Let $W$ be a Segal space.

\begin{enumerate}

\item The \emph{set of objects} of $W$ is $\operatorname{ob}(W)=W_{0,0}$.

\item If $x$ and $y$ are objects of $W$, then their \emph{mapping space} $\operatorname{map}_{W}(x, y)$ is the pullback

\[\begin{tikzcd}[row sep=3mm, column sep=3mm, ampersand replacement=\&]
	{\operatorname{map}_{W}(x, y)} \&\& {W_{1}} \\
	\\
	{\Delta[0]} \&\& {W_{0} \times W_{0}.}
	\arrow[from=1-1, to=1-3]
	\arrow[from=1-1, to=3-1]
	\arrow["{(d_{1}, d_{0})}"', from=1-3, to=3-3]
	\arrow["{(x, y)}", from=3-1, to=3-3]
\end{tikzcd}\]

\end{enumerate}

\end{definition}
As the Segal maps are weak equivalences, we can define a composition operation on mapping spaces that is well-defined, associative, and unital up to a notion of homotopy. This notion of homotopy gives rise to the homotopy category of a Segal space. A Dwyer-Kan equivalence is a map that preserves this structure; the name refers to Dwyer and Kan's analogous notion of equivalence for simplicial categories from \cite[\S 2.4]{DK1980}.

\begin{definition}[{\cite[\S 7.4]{Rezk2001}}]\label{defdkdef} A map of Segal spaces $f \colon X \rightarrow Y$ is a \emph{Dwyer-Kan equivalence} if,

\begin{enumerate}

\item for all $x, y \in \operatorname{ob}(X)$, the map $\operatorname{map}_{X}(x, y) \rightarrow \operatorname{map}_{Y}(f(x), f(y))$ is a weak equivalence; and

\item the induced functor on homotopy categories $\operatorname{Ho}(X) \rightarrow \operatorname{Ho}(Y)$ is essentially surjective.

\end{enumerate}

\end{definition}
To define complete Segal spaces, let $I$ be the category with two objects and a unique isomorphism between them, and let $E=\operatorname{nerve}(I)$. For a Segal space $W$, the map $E \rightarrow \Delta[0]$ induces a map $W_{0} \rightarrow \operatorname{Map}_{\mathcal{SS}\mathrm{ets}^{\Delta^{\mathrm{op}}}}(E^{t}, W)$.

\begin{definition}[{\cite[\S 6]{Rezk2001}}]\label{charcompleteness} A Segal space $W$ is \emph{complete} if $W_{0} \rightarrow \operatorname{Map}_{\mathcal{SS}\mathrm{ets}^{\Delta^{\mathrm{op}}}}(E^{t}, W)$ is a weak equivalence.
\end{definition}
Localizing the Reedy model structure on simplicial spaces at the set $\{G(n)^{t} \rightarrow \Delta[n]^{t} \mid n \geq 2\} \cup \{E^{t} \rightarrow \Delta[0]^{t}\}$ yields a model category whose fibrant objects are the complete Segal spaces.

\begin{theorem}[{\cite[Theorem 7.2]{Rezk2001}}]\label{compsegalspacemodellbl} There is a combinatorial, left proper, and simplicial model structure $\mathcal{CSS}$ on the category of simplicial spaces in which a map $f$ is

\begin{enumerate}

\item a weak equivalence if and only if, for every complete Segal space $W$, the map $\operatorname{Map}_{\mathcal{SS}\mathrm{ets}^{\Delta^{\mathrm{op}}}}(f, W)$ is a weak equivalence; and

\item a cofibration if and only if $f$ is a monomorphism.

\end{enumerate}
A simplicial space $W$ is fibrant in $\mathcal{CSS}$ if and only if $W$ is a complete Segal space.
\end{theorem}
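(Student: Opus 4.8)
The plan is to realize $\mathcal{CSS}$ as the left Bousfield localization of the Reedy model structure on $\mathcal{SS}\mathrm{ets}^{\Delta^{\mathrm{op}}}$ at the set
\[S = \{G(n)^{t} \rightarrow \Delta[n]^{t} \mid n \geq 2\} \cup \{E^{t} \rightarrow \Delta[0]^{t}\}.\]
The Reedy model structure is combinatorial, left proper, and simplicial, so Theorem \ref{lblexists} applies directly and produces a model structure $\mathcal{L}_{S}\mathcal{SS}\mathrm{ets}^{\Delta^{\mathrm{op}}}$ that is itself combinatorial, left proper, and simplicial, whose cofibrations are the monomorphisms, whose weak equivalences are the $S$-local equivalences, and whose fibrant objects are the $S$-local objects. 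All the structural assertions of the theorem are thereby immediate, so the entire content of the proof is to match the homotopy-theoretic data: I must show that the $S$-local objects are exactly the complete Segal spaces, after which the description of the weak equivalences follows from Definition \ref{deflblsdef}(2).

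So fix a simplicial space $W$; by Definition \ref{deflblsdef}(1) any $S$-local $W$ is in particular Reedy fibrant, which I assume throughout. The locality condition splits along the two families in $S$, which I would treat separately. For the maps $G(n)^{t} \rightarrow \Delta[n]^{t}$, Proposition \ref{yonedahelpsus} gives $\operatorname{Map}_{\mathcal{SS}\mathrm{ets}^{\Delta^{\mathrm{op}}}}(\Delta[n]^{t}, W) \cong W_{n}$. Since $G(n) = \bigcup_{i=0}^{n-1} \alpha^{i}(\Delta[1])$ is an iterated pushout of copies of $\Delta[1]$ glued along the shared vertices $\Delta[0]$, and $\operatorname{Map}_{\mathcal{SS}\mathrm{ets}^{\Delta^{\mathrm{op}}}}(-, W)$ carries this colimit to the corresponding limit, I obtain
\[\operatorname{Map}_{\mathcal{SS}\mathrm{ets}^{\Delta^{\mathrm{op}}}}(G(n)^{t}, W) \cong W_{1} \underset{W_{0}}{\times} \cdots \underset{W_{0}}{\times} W_{1}.\]
Under these identifications, the map induced by the inclusion is exactly the $n$-th Segal map $\varphi_{n}$, so $W$ is $S$-local at this family if and only if every $\varphi_{n}$ is a weak homotopy equivalence, i.e. $W$ is a Segal space in the sense of Definition \ref{segalobjectdef}.

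For the remaining map $E^{t} \rightarrow \Delta[0]^{t}$, Proposition \ref{yonedahelpsus} identifies $\operatorname{Map}_{\mathcal{SS}\mathrm{ets}^{\Delta^{\mathrm{op}}}}(\Delta[0]^{t}, W) \cong W_{0}$, so locality at this map says precisely that $W_{0} \rightarrow \operatorname{Map}_{\mathcal{SS}\mathrm{ets}^{\Delta^{\mathrm{op}}}}(E^{t}, W)$ is a weak homotopy equivalence, which is the completeness condition of Definition \ref{charcompleteness}. Combining the two families, a Reedy fibrant $W$ is $S$-local if and only if it is a complete Segal space. Finally, by Definition \ref{deflblsdef}(2) the $S$-local equivalences are exactly the maps $f$ for which $\operatorname{Map}_{\mathcal{SS}\mathrm{ets}^{\Delta^{\mathrm{op}}}}(f, K)$ is a weak homotopy equivalence for every $S$-local $K$; since the $S$-local objects are the complete Segal spaces, this is the stated description of the weak equivalences.

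The step I expect to be the main obstacle is the identification $\operatorname{Map}_{\mathcal{SS}\mathrm{ets}^{\Delta^{\mathrm{op}}}}(G(n)^{t}, W) \cong W_{1} \times_{W_{0}} \cdots \times_{W_{0}} W_{1}$ together with the verification that the induced map really is $\varphi_{n}$. One must check that the strict limit computed by $\operatorname{Map}_{\mathcal{SS}\mathrm{ets}^{\Delta^{\mathrm{op}}}}(-, W)$ models the homotopy limit, so that the locality condition, which is phrased up to weak equivalence, genuinely corresponds to the Segal condition; this is where Reedy fibrancy of $W$ enters, ensuring that the relevant maps are fibrations and that the pullbacks are homotopy pullbacks. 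Once this compatibility is in place, the remainder is a direct translation through the Yoneda isomorphism of Proposition \ref{yonedahelpsus}.
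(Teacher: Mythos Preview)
Your proposal is correct and follows precisely the approach indicated in the paper: the sentence preceding the theorem states that $\mathcal{CSS}$ is obtained by localizing the Reedy model structure at exactly the set $S$ you chose, and the paper then simply cites Rezk for the result rather than giving its own proof. Your identification of the $S$-local objects with complete Segal spaces via Proposition~\ref{yonedahelpsus} and Definitions~\ref{segalobjectdef} and~\ref{charcompleteness} is the standard verification, and your caveat about Reedy fibrancy ensuring that the iterated pullback is a homotopy pullback is the right point to flag.
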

By an application of Proposition \ref{rezkcriterionprop}, Rezk also extracts the compatibility of $\mathcal{CSS}$ with the cartesian closure.

\begin{theorem}[{\cite[Theorem 7.2]{Rezk2001}}]\label{cssegalspacecartesian} The model category $\mathcal{CSS}$ is cartesian.
\end{theorem}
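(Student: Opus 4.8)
The plan is to apply Rezk's criterion, Proposition \ref{rezkcriterionprop}, to the localizing set $T = \{G(n)^{t} \rightarrow \Delta[n]^{t} \mid n \geq 2\} \cup \{E^{t} \rightarrow \Delta[0]^{t}\}$ at which $\mathcal{CSS}$ is realized as the left Bousfield localization of the Reedy model structure on $\mathcal{SS}\mathrm{ets}^{\Delta^{\mathrm{op}}}$. By Theorem \ref{compsegalspacemodellbl}, the $T$-local simplicial spaces are exactly the complete Segal spaces. Hence the criterion collapses the entire theorem into a single claim: for every complete Segal space $W$, the exponential $W^{\Delta[1]^{t}}$ is again a complete Segal space. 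Once this claim is established, Proposition \ref{rezkcriterionprop} directly yields that $\mathcal{CSS}$ is cartesian.

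To prove the claim I would first observe that $W^{\Delta[1]^{t}}$ is Reedy fibrant, since the Reedy model structure is cartesian and $W$, being fibrant in $\mathcal{CSS}$, is Reedy fibrant. For $T$-locality I would exploit the exponential adjunction of the cartesian closed category $\mathcal{SS}\mathrm{ets}^{\Delta^{\mathrm{op}}}$, which gives a natural isomorphism $\operatorname{Map}_{\mathcal{SS}\mathrm{ets}^{\Delta^{\mathrm{op}}}}(A, W^{\Delta[1]^{t}}) \cong \operatorname{Map}_{\mathcal{SS}\mathrm{ets}^{\Delta^{\mathrm{op}}}}(A \times \Delta[1]^{t}, W)$ for every simplicial space $A$. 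Under this identification, for a generator $\psi \colon A \rightarrow B$ in $T$, the map $\operatorname{Map}_{\mathcal{SS}\mathrm{ets}^{\Delta^{\mathrm{op}}}}(\psi, W^{\Delta[1]^{t}})$ becomes $\operatorname{Map}_{\mathcal{SS}\mathrm{ets}^{\Delta^{\mathrm{op}}}}(\psi \times \operatorname{id}_{\Delta[1]^{t}}, W)$. Because $W$ is $T$-local and every object here is cofibrant, this last map is a weak homotopy equivalence as soon as $\psi \times \operatorname{id}_{\Delta[1]^{t}}$ is a $T$-local equivalence, by Definition \ref{deflblsdef}. Thus the entire problem reduces to showing that crossing each generator of $T$ with $\Delta[1]^{t}$ produces a $T$-local equivalence.

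The main obstacle is precisely this last combinatorial step, since one cannot simply invoke preservation of $T$-local equivalences under products with $\Delta[1]^{t}$ without circularity, as that is essentially the cartesian property being proved. For the Segal generators I would analyze $\Delta[n] \times \Delta[1]$ via its standard shuffle decomposition into non-degenerate $(n+1)$-simplices, exhibiting $\Delta[n]^{t} \times \Delta[1]^{t}$ as built from $G(n)^{t} \times \Delta[1]^{t}$ by a finite sequence of cobase changes along Segal inclusions $G(k)^{t} \rightarrow \Delta[k]^{t}$; since $\mathcal{CSS}$ is left proper, cobase change of a $T$-local equivalence along a monomorphism is again a $T$-local equivalence, and closure under composition then gives that $G(n)^{t} \times \Delta[1]^{t} \rightarrow \Delta[n]^{t} \times \Delta[1]^{t}$ is a $T$-local equivalence. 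For the completeness generator I would argue that $E^{t} \times \Delta[1]^{t} \rightarrow \Delta[0]^{t} \times \Delta[1]^{t} = \Delta[1]^{t}$ is a $T$-local equivalence by resolving $E = \operatorname{nerve}(I)$ into its simplices and reducing once more to the Segal and completeness maps. I expect the bookkeeping of these pushouts, rather than any conceptual difficulty, to be the delicate part of the argument; as this combinatorial verification is exactly the content of Rezk's original analysis, one may alternatively invoke \cite[Theorem 7.2]{Rezk2001} directly once the reduction through Proposition \ref{rezkcriterionprop} and the adjunction is in place.
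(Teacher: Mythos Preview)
Your proposal is correct and follows exactly the approach the paper indicates: the paper does not itself prove this cited result but notes that Rezk derives it ``by an application of Proposition \ref{rezkcriterionprop},'' and your reduction via that criterion, the exponential adjunction, and the shuffle decomposition of $\Delta[n]^{t} \times \Delta[1]^{t}$ is precisely Rezk's original argument (and is mirrored in the paper's own proof of the analogous $M$-local statement, Proposition \ref{biggestlemmayet}).
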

Moreover, we can complete a Segal space up to Dwyer-Kan equivalence in a functorial fashion as follows.

\begin{theorem}[{\cite[\S 14]{Rezk2001}}]\label{completionfunctor} Let $W$ be a Segal space. Then, there exist a complete Segal space $\widehat{W}$ and a Dwyer-Kan equivalence $W \rightarrow \widehat{W}$. Moreover, this assignment is functorial.
\end{theorem}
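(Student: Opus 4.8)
The plan is to obtain $\widehat{W}$ as a functorial fibrant replacement of $W$ in the model category $\mathcal{CSS}$ of Theorem \ref{compsegalspacemodellbl}, and then to verify that the replacement map is a Dwyer-Kan equivalence. Since $\mathcal{CSS}$ is combinatorial, it is cofibrantly generated, so the small object argument supplies a functorial factorization of the terminal map $W \rightarrow \ast$ as an acyclic cofibration $W \rightarrow \widehat{W}$ followed by a fibration $\widehat{W} \rightarrow \ast$. By Theorem \ref{compsegalspacemodellbl}, the object $\widehat{W}$ is fibrant in $\mathcal{CSS}$, hence a complete Segal space, and the map $W \rightarrow \widehat{W}$ is in particular a weak equivalence in $\mathcal{CSS}$. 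Functoriality of the assignment $W \mapsto \widehat{W}$ and of the map $W \rightarrow \widehat{W}$ is built into the functorial factorization produced by the small object argument.

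It remains to check that $W \rightarrow \widehat{W}$ is a Dwyer-Kan equivalence in the sense of Definition \ref{defdkdef}. Both the source $W$ (by hypothesis) and the target $\widehat{W}$ (being complete, hence Segal) are Segal spaces, so the content reduces to the following claim: a map between Segal spaces that is a weak equivalence in $\mathcal{CSS}$ is a Dwyer-Kan equivalence. Granting this claim, the map $W \rightarrow \widehat{W}$ is a Dwyer-Kan equivalence, which finishes the argument.

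I expect this claim to be the main obstacle. The easy case is a weak equivalence $f \colon X \rightarrow Y$ between two complete Segal spaces: being a weak equivalence in $\mathcal{CSS}$ between $\mathcal{CSS}$-fibrant objects, $f$ is then a Reedy weak equivalence, that is, a levelwise weak homotopy equivalence, from which both clauses of Definition \ref{defdkdef} follow at once, as the mapping spaces of Definition \ref{defobjmapdef} are then homotopy invariant and the homotopy category is determined by their sets of components. The difficulty is the general Segal case, in which $f$ need not be levelwise: passage to the completion can genuinely alter the space of objects, enlarging $W_{0}$ so as to absorb the homotopy equivalences recorded by the map $W_{0} \rightarrow \operatorname{Map}_{\mathcal{SS}\mathrm{ets}^{\Delta^{\mathrm{op}}}}(E^{t}, W)$ of Definition \ref{charcompleteness}. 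One must therefore argue directly that $f$ induces weak homotopy equivalences on mapping spaces and an equivalence on homotopy categories, even though $X_{0} \rightarrow Y_{0}$ and $X_{1} \rightarrow Y_{1}$ are not assumed to be weak equivalences.

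To settle the claim, one analyzes how localization at the completeness map $E^{t} \rightarrow \Delta[0]^{t}$ interacts with the Segal structure. The key input is that, for a Segal space $X$, the subspace $X_{\mathrm{hoequiv}} \subseteq X_{1}$ of homotopy equivalences is weakly equivalent to $\operatorname{Map}_{\mathcal{SS}\mathrm{ets}^{\Delta^{\mathrm{op}}}}(E^{t}, X)$, so that completeness is exactly the assertion that the degeneracy $X_{0} \rightarrow X_{\mathrm{hoequiv}}$ is a weak homotopy equivalence. Using this, one shows that a $\mathcal{CSS}$-weak equivalence of Segal spaces induces weak homotopy equivalences on the mapping spaces of Definition \ref{defobjmapdef}, the point being that, although the spaces of objects may differ, the spaces of equivalences are matched up; the Segal condition then recovers composition, and hence the homotopy category, from the mapping spaces, yielding the equivalence of homotopy categories required by Definition \ref{defdkdef}. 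Functoriality having already been secured in the first step, this completes the proof.
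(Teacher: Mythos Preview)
The paper does not prove this theorem; it simply cites Rezk's \S 14. So there is no ``paper's own proof'' to compare against beyond Rezk's original argument, and your proposal should be measured against that.

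Your strategy differs from Rezk's. Rezk does \emph{not} take $\widehat{W}$ to be an abstract fibrant replacement in $\mathcal{CSS}$; he builds it explicitly as (a Reedy fibrant replacement of) the simplicial space $[n]\mapsto \operatorname{Map}_{\mathcal{SS}\mathrm{ets}^{\Delta^{\mathrm{op}}}}(E(n)^{t},W)$, where $E(n)$ is the nerve of the contractible groupoid on $n+1$ objects. He then verifies directly, using this concrete description, that $W\to\widehat{W}$ is a Dwyer--Kan equivalence and that $\widehat{W}$ is complete. Only afterwards does he deduce the theorem you are trying to use as a lemma, namely that a $\mathcal{CSS}$-weak equivalence between Segal spaces is exactly a Dwyer--Kan equivalence (Rezk, Theorem~7.7).

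This is where your proposal has a genuine gap. Your whole argument rests on the claim ``a $\mathcal{CSS}$-weak equivalence between Segal spaces is a Dwyer--Kan equivalence,'' and you correctly flag it as the main obstacle, but your final paragraph does not prove it. Saying that ``the spaces of equivalences are matched up'' and that ``the Segal condition then recovers composition'' is an outline of hopes, not an argument: you have not explained why a $\mathcal{CSS}$-weak equivalence $f\colon X\to Y$ induces weak equivalences $\operatorname{map}_{X}(x,y)\to\operatorname{map}_{Y}(fx,fy)$ when $f$ is not levelwise. Worse, in Rezk's development this very implication is \emph{proved using} the explicit completion functor, so invoking it to justify that fibrant replacement is a Dwyer--Kan equivalence is circular unless you supply an independent proof. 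If you want to keep the fibrant-replacement route, you must either cite Rezk's Theorem~7.7 as an external input (in which case the theorem you are proving becomes a trivial corollary and the content has just been relocated), or give a self-contained proof of that implication; the sketch you wrote does neither.
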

This result helps establish a categorical interpretation of the weak equivalences in $\mathcal{CSS}$ between Segal spaces.

\begin{theorem}[{\cite[Theorem 7.7]{Rezk2001}}]\label{tractablecat} A map of Segal spaces is a weak equivalence in $\mathcal{CSS}$ if and only if it is a Dwyer-Kan equivalence.
\end{theorem}

\section{Segal categories}\label{section5}
We review another model for $(\infty, 1)$-categories, that of Segal categories. Segal categories are simplicial spaces; however, in place of the completeness condition, we ask that the space of $0$-simplices of a Segal category be discrete. We recall Bergner's model categories for Segal categories, and we show that Bergner's model category for Reedy fibrant Segal categories can be left-induced from Rezk's model category for complete Segal spaces. We also go over Bergner's proof of the equivalence between complete Segal spaces and Segal categories.

\subsection{Definition of a Segal category}\label{section5.1}
We first gather the definitions that we need from \cite[Chapter 6.1]{Bergner2018}.

\begin{definition}\label{defsegalprecatdef} A \emph{Segal precategory} is a simplicial space $X$  whose space of $0$-simplices $X_{0}$ is discrete.
\end{definition}
We write $\mathcal{S}\mathrm{e}\mathcal{C}\mathrm{at}$ for the category of Segal precategories and their simplicial maps (natural transformations).

\begin{definition}\label{defsegalcatdef} A \emph{Segal category} is a Segal precategory $X$ such that, for every $n \geq 2$, its $n$-th Segal map is a weak equivalence.
\end{definition}
A Segal category is not required to be Reedy fibrant, so it need not be a Segal space. Still, objects and mapping spaces are defined as in Segal spaces. The discreteness of the space of $0$-simplices has a key consequence.

\begin{proposition}\label{disjointoffibers} If $X$ is a Segal category, then its space of $1$-simplices decomposes as $X_{1}=\coprod\limits_{(x, y)} \operatorname{map}_{X}(x, y)$. Thus, for every $n \geq 2$, the $n$-th Segal map of $X$ is the weak equivalence \[\varphi_{n} \colon X_{n} \rightarrow \coprod\limits_{(x_{0}, \dots, x_{n})} \prod\limits_{i=0}^{n-1} \operatorname{map}_{X}(x_{i}, x_{i+1}).\]
\end{proposition}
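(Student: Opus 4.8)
The plan is to first establish the decomposition of $X_1$ and then deduce the description of the target of the Segal map. The crucial feature of a Segal precategory is that $X_0$ is discrete, hence so is $X_0 \times X_0$. By Definition \ref{defobjmapdef}, the mapping space $\operatorname{map}_X(x, y)$ is precisely the fiber of the map $(d_1, d_0) \colon X_1 \to X_0 \times X_0$ over the $0$-simplex $(x, y)$. Since any map of simplicial sets whose target is discrete exhibits its source as the disjoint union of the preimages of the vertices of the target, I would conclude that $X_1 = \coprod_{(x, y)} \operatorname{map}_X(x, y)$, where the coproduct is indexed over pairs $(x, y) \in \operatorname{ob}(X) \times \operatorname{ob}(X)$.

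Next I would identify the target of the $n$-th Segal map by substituting this decomposition into the iterated pullback $X_1 \underset{X_0}{\times} \cdots \underset{X_0}{\times} X_1$. On each factor $\operatorname{map}_X(x, y)$, the face map $d_0$ is constant at the vertex $y$ and $d_1$ is constant at $x$; since the pullbacks match $d_0$ of one factor to $d_1$ of the next, I would use two facts about pullbacks over the discrete simplicial set $X_0$. First, the pullback distributes over the coproduct decomposition of each factor. Second, on a pair of summands the fiber product over $X_0$ is empty unless the relevant target and source vertices agree, in which case it collapses to the direct product because pulling back along a constant map to a point is just a direct product. Iterating this matching condition down the string forces agreement of consecutive indices and yields
\[X_1 \underset{X_0}{\times} \cdots \underset{X_0}{\times} X_1 \cong \coprod_{(x_0, \dots, x_n)} \prod_{i=0}^{n-1} \operatorname{map}_X(x_i, x_{i+1}).\]

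Finally, since $X$ is a Segal category, its $n$-th Segal map $\varphi_n$ is by Definition \ref{defsegalcatdef} a weak homotopy equivalence for every $n \geq 2$, so composing with the identification above gives the stated weak homotopy equivalence. I expect the main obstacle to be the bookkeeping in the iterated fiber product: one must verify carefully that pullback over the discrete $X_0$ commutes with the coproducts and reduces to a direct product on exactly the matching summands. This is precisely where the discreteness of $X_0$ does all the work, converting a homotopy-theoretic fiber product into an honest disjoint union of products of mapping spaces.
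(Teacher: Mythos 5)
Your proof is correct and takes exactly the route the paper intends: the paper states this proposition without proof, as an immediate consequence of the discreteness of $X_{0}$, and your argument---identifying $\operatorname{map}_{X}(x,y)$ as the honest fiber of $(d_{1},d_{0})\colon X_{1}\rightarrow X_{0}\times X_{0}$ over a vertex of a discrete simplicial set, then distributing the iterated strict fiber product over $X_{0}$ across these coproduct decompositions so that it collapses to a disjoint union of products indexed by matching strings of objects---is precisely the standard verification left implicit there. The final appeal to Definition \ref{defsegalcatdef} to conclude that $\varphi_{n}$ is a weak homotopy equivalence is also as intended, so nothing is missing.
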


\subsection{Model categories for Segal categories}\label{section5.2}
Unlike the Reedy model structure on simplicial spaces, there exists no model structure on Segal precategories with levelwise weak equivalences and levelwise cofibrations. Thus, the model categories for Segal categories that we rely on do not arise as left Bousfield localizations. In \cite{Bergner2007}, Bergner directly develops two model categories for Segal categories. The weak equivalences are described by means of the following result, which refers to the model category for complete Segal spaces $\mathcal{CSS}$.

\begin{proposition}[{\cite[\S 5]{Bergner2007}}]\label{localizationofsepre} There is a functor $L_{c} \colon \mathcal{S}\mathrm{e}\mathcal{C}\mathrm{at} \rightarrow \mathcal{S}\mathrm{e}\mathcal{C}\mathrm{at}$ such that, for every Segal precategory $X$, its image $L_{c}(X)$ is a Segal category and a Segal space that is weakly equivalent to $X$ in $\mathcal{CSS}$.
\end{proposition}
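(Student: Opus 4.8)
The plan is to build $L_{c}$ as a composite of two functorial operations on simplicial spaces: a fibrant replacement that turns $X$ into a Reedy fibrant Segal space, followed by a discretization that recovers a discrete space of $0$-simplices without disturbing the homotopy type. The guiding observation is that every step can be arranged to be a weak equivalence in $\mathcal{CSS}$, so that the resulting zig-zag exhibits $X$ and $L_{c}(X)$ as weakly equivalent in $\mathcal{CSS}$.

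First I would view $X$ as an object of $\mathcal{SS}\mathrm{ets}^{\Delta^{\mathrm{op}}}$ and apply, in turn, a functorial Reedy fibrant replacement and a functorial fibrant replacement in the left Bousfield localization of the Reedy model structure at the set $\{G(n)^{t} \rightarrow \Delta[n]^{t} \mid n \geq 2\}$ of Segal maps, which exists by Theorem \ref{lblexists}. This yields a functorial map $X \rightarrow SX$ with $SX$ a Reedy fibrant Segal space in the sense of Definition \ref{segalobjectdef}. Because Reedy weak equivalences are $\mathcal{CSS}$-local equivalences by Remark \ref{deflblsdef1}, and the Segal maps lie in the localizing set $\{G(n)^{t} \rightarrow \Delta[n]^{t}\} \cup \{E^{t} \rightarrow \Delta[0]^{t}\}$ defining $\mathcal{CSS}$ in Theorem \ref{compsegalspacemodellbl}, every local equivalence for the Segal localization is also a $\mathcal{CSS}$-local equivalence; hence $X \rightarrow SX$ is a weak equivalence in $\mathcal{CSS}$.

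Next I would discretize. Since $X_{0}$ is discrete and the two fibrant replacements do not alter the weak homotopy type of the $0$-space — the Reedy replacement is a levelwise weak equivalence, and the Segal maps are isomorphisms on $0$-simplices, so the Segal localization may be performed fiberwise over $SX_{0}$ — the space $SX_{0}$ is homotopy discrete. Writing $O$ for a set of chosen representatives of the path components of $SX_{0}$, I would define $L_{c}(X)$ by the levelwise pullback $(L_{c}(X))_{n} = SX_{n} \times_{(SX_{0})^{n+1}} O^{n+1}$ along the $n+1$ vertex maps and the inclusion $O \hookrightarrow SX_{0}$. By construction $(L_{c}(X))_{0} = O$ is discrete, so $L_{c}(X)$ is a Segal precategory; its Segal maps are pullbacks of those of $SX$, so $L_{c}(X)$ is a Segal category by Definition \ref{defsegalcatdef} and Proposition \ref{disjointoffibers}; and the pullbacks are formed along fibrations — using Reedy fibrancy and the Segal condition to see that the vertex maps $SX_{n} \rightarrow (SX_{0})^{n+1}$ are fibrations — so $L_{c}(X)$ remains Reedy fibrant while the comparison $L_{c}(X) \rightarrow SX$ is a levelwise weak equivalence, hence a further $\mathcal{CSS}$-weak equivalence. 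Composing, the zig-zag $X \rightarrow SX \leftarrow L_{c}(X)$ shows that $L_{c}(X)$ is weakly equivalent to $X$ in $\mathcal{CSS}$, and functoriality of $L_{c}$ follows from the functoriality of each step.

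The main obstacle is the discretization step. The two assertions carrying the weight are that $SX_{0}$ is genuinely homotopy discrete — which must be argued from the fact that the Segal localization, unlike passage to the completion of Theorem \ref{completionfunctor}, does not change the $0$-space up to weak equivalence — and that the levelwise pullback against $O \hookrightarrow SX_{0}$ is simultaneously a levelwise weak equivalence (so that homotopical information is preserved and the $\mathcal{CSS}$-equivalence survives), a Reedy fibrant object, and a Segal precategory. Establishing that the vertex maps are fibrations, so that the pullbacks compute the correct homotopy fibers and homotopy invariance of homotopy pullbacks applies, is the technical crux; by contrast, the verification that $X \rightarrow SX$ is a $\mathcal{CSS}$-weak equivalence is formal, resting only on the inclusion of localizing sets.
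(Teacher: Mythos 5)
The paper itself does not prove this proposition; it imports it from Bergner's construction in \cite[\S 5]{Bergner2007}, which proceeds entirely \emph{inside} the category of Segal precategories: one attaches cells along reduced maps (applying the reduction functor $(-)_{r}$ to the pushout-products of the Segal inclusions $G(n)^{t} \rightarrow \Delta[n]^{t}$ with boundary inclusions), so that the set of $0$-simplices stays literally discrete at every stage, and the technical work is to show that these reduced maps are still weak equivalences in $\mathcal{CSS}$ and that the output is a Reedy fibrant Segal category. Your route --- first localize in simplicial spaces to get a Segal space $SX$, then discretize by a levelwise pullback over representatives of the components of $SX_{0}$ (a construction close in spirit to the functor $R$ of Theorem \ref{quilleneqinfty1}) --- is genuinely different, and it can be made to work, but as written it has two real gaps.

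First, functoriality fails as stated, and functoriality is part of what is being proved (Theorem \ref{secatcmodel} and Lemma \ref{iffforlc} depend on $L_{c}$ being a functor). Your $L_{c}(X)$ depends on ``a set of chosen representatives $O$ of the path components of $SX_{0}$,'' and such choices cannot be made natural: a map $X \rightarrow Y$ need not carry chosen representatives to chosen representatives. Second, the homotopy discreteness of $SX_{0}$ --- the statement carrying all the weight, since it is what makes $O^{n+1} \rightarrow (SX_{0})^{n+1}$, and hence $L_{c}(X) \rightarrow SX$, a levelwise weak equivalence --- is only gestured at; ``the Segal localization may be performed fiberwise over $SX_{0}$'' is not an argument, and a black-box functorial fibrant replacement in the localized model structure of Theorem \ref{lblexists} gives no control whatsoever over what happens to the $0$-space. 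Both gaps are closed by the same device, which is precisely where the content of Bergner's \S 5 lies: build $X \rightarrow SX$ explicitly by the small object argument applied to the Reedy generating acyclic cofibrations together with the pushout-products of $G(n)^{t} \rightarrow \Delta[n]^{t}$ with $\partial\Delta[m] \rightarrow \Delta[m]$, checking via Hirschhorn's horn construction for left Bousfield localizations that the result is Segal-local. The pushout-product maps are isomorphisms on $0$-spaces (as $G(n)$ contains all vertices of $\Delta[n]$) and the Reedy maps are levelwise acyclic cofibrations, so $X_{0} \rightarrow SX_{0}$ is an acyclic cofibration of simplicial sets; this proves homotopy discreteness and simultaneously exhibits the discrete set $X_{0} \subseteq SX_{0}$ as a \emph{natural} set of representatives, one per component, restoring functoriality. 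Granting that, the rest of your argument is sound: the vertex maps $SX_{n} \rightarrow (SX_{0})^{n+1}$ are fibrations because $\coprod_{n+1}\Delta[0]^{t} \rightarrow \Delta[n]^{t}$ is a Reedy cofibration, $SX$ is Reedy fibrant, and the Reedy structure is simplicial (the Segal condition is not needed there), and right properness of ${\mathcal{SS}\mathrm{ets}}$ then makes the levelwise pullback a levelwise weak equivalence, Reedy fibrant, and a Segal category in the sense of Definition \ref{defsegalcatdef}.
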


\begin{definition}[{\cite[\S 5]{Bergner2007}}]\label{defdklcdef} A map of Segal precategories $f$ is a \emph{Dwyer-Kan equivalence} if $L_{c}(f)$ is a Dwyer-Kan equivalence of Segal spaces.
\end{definition}
In this terminology, Proposition \ref{localizationofsepre} has the following consequence.

\begin{lemma}\label{iffforlc} Let $\psi$ be a map of Segal precategories. Then, $\psi$ is a weak equivalence in $\mathcal{CSS}$ if and only if $\psi$ is a Dwyer-Kan equivalence.
\end{lemma}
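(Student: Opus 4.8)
The plan is to exploit the functoriality of $L_{c}$ together with the two-out-of-three property of weak equivalences in $\mathcal{CSS}$. The crucial observation is that, because $L_{c}$ is a functor, the weak equivalences $X \rightarrow L_{c}(X)$ furnished by Proposition \ref{localizationofsepre} assemble into a natural transformation $\eta \colon \operatorname{id}_{\mathcal{S}\mathrm{e}\mathcal{C}\mathrm{at}} \Rightarrow L_{c}$, each of whose components $\eta_{X} \colon X \rightarrow L_{c}(X)$ is a weak equivalence in $\mathcal{CSS}$. I would record this naturality at the outset as the only input beyond the formal model-category axioms.

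Given this, the argument is short. For a map $\psi \colon X \rightarrow Y$ of Segal precategories, naturality of $\eta$ yields the commutative square
\[\begin{tikzcd}[row sep=3mm, column sep=3mm, ampersand replacement=\&]
	{X} \&\& {Y} \\
	\\
	{L_{c}(X)} \&\& {L_{c}(Y),}
	\arrow["{\psi}", from=1-1, to=1-3]
	\arrow["{\eta_{X}}"', from=1-1, to=3-1]
	\arrow["{\eta_{Y}}", from=1-3, to=3-3]
	\arrow["{L_{c}(\psi)}"', from=3-1, to=3-3]
\end{tikzcd}\]
in which both vertical maps are weak equivalences in $\mathcal{CSS}$. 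If $\psi$ is a weak equivalence, then so is the composite $\eta_{Y} \circ \psi = L_{c}(\psi) \circ \eta_{X}$; since $\eta_{X}$ is a weak equivalence, the two-out-of-three property forces $L_{c}(\psi)$ to be one as well. Conversely, if $L_{c}(\psi)$ is a weak equivalence, then the composite $L_{c}(\psi) \circ \eta_{X} = \eta_{Y} \circ \psi$ is one, and since $\eta_{Y}$ is a weak equivalence, two-out-of-three forces $\psi$ to be one.

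The only point requiring care is the naturality of $\eta$, that is, that the replacement $X \rightarrow L_{c}(X)$ can be chosen functorially in $X$. This is where I would lean on the construction of $L_{c}$ in \cite{Bergner2007}, where $L_{c}$ is built as a functorial localization and thus comes equipped with the natural transformation above. Were only a pointwise (non-natural) weak equivalence available, the commutative square would be unavailable and the two-out-of-three argument would break down, so establishing naturality is the genuine content of the proof; once it is in hand, the lemma is a formal consequence of the model-category axioms for $\mathcal{CSS}$.
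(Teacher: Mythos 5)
Your proof is correct and is precisely the argument the paper intends: the paper states the lemma as an immediate consequence of Proposition \ref{localizationofsepre}, the implicit reasoning being exactly your naturality square for $\eta \colon \operatorname{id} \Rightarrow L_{c}$ combined with two-out-of-three in $\mathcal{CSS}$. You are also right to flag naturality of $X \rightarrow L_{c}(X)$ as the essential input (objectwise weak equivalence alone would not suffice), and this is indeed supplied by Bergner's functorial construction of $L_{c}$ in \cite{Bergner2007}.
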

We also record for later use a technical result that features in Bergner's arguments.

\begin{lemma}[{\cite[Lemma 5.9]{Bergner2007}}]\label{technical} If a map of Segal precategories $\psi$ has the right lifting property against the class of monomorphisms of Segal precategories, then $\psi$ is a Dwyer-Kan equivalence.
\end{lemma}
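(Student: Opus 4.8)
The plan is to reduce the statement to showing that $\psi$ itself is a weak equivalence in $\mathcal{CSS}$, since Lemma \ref{iffforlc} then immediately upgrades this to $L_{c}(\psi)$. So suppose $\psi \colon X \rightarrow Y$ is a map of Segal precategories with the right lifting property against all monomorphisms of Segal precategories. First I would extract a section: the map $\emptyset \rightarrow Y$ is a monomorphism of Segal precategories, so lifting $\psi$ against it in the square with bottom edge $\operatorname{id}_{Y}$ produces a map $s \colon Y \rightarrow X$ with $\psi s = \operatorname{id}_{Y}$. It then remains to prove that $s\psi$ and $\operatorname{id}_{X}$ agree in $\operatorname{Ho}(\mathcal{CSS})$, for then $[\psi]$ is invertible with inverse $[s]$, and $\psi$ is a weak equivalence in $\mathcal{CSS}$.

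The heart of the argument is a cylinder homotopy carried out inside $\mathcal{S}\mathrm{e}\mathcal{C}\mathrm{at}$. The naive interval $\Delta[1]^{t}$ does not serve: although $X \times \Delta[1]^{t}$ is again a Segal precategory, the projection $X \times \Delta[1]^{t} \rightarrow X$ is not a weak equivalence in $\mathcal{CSS}$, because $\Delta[1]^{t} \rightarrow \Delta[0]^{t}$ is not. Instead I would use $E^{t}$, where $E = \operatorname{nerve}(I)$. Since $(E^{t})_{0} = \operatorname{ob}(I)$ is discrete, $X \times E^{t}$ is again a Segal precategory, and the two objects of $I$ furnish a monomorphism $\iota \colon X \sqcup X \rightarrow X \times E^{t}$ of Segal precategories. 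Crucially, $E^{t} \rightarrow \Delta[0]^{t}$ lies in the localizing set defining $\mathcal{CSS}$ (Theorem \ref{compsegalspacemodellbl}), hence is a weak equivalence there; since $\mathcal{CSS}$ is cartesian (Theorem \ref{cssegalspacecartesian}) and every object is cofibrant, the functor $X \times (-)$ is left Quillen and so preserves this weak equivalence between cofibrant objects. Thus the projection $X \times E^{t} \rightarrow X$ is a weak equivalence in $\mathcal{CSS}$, and $X \times E^{t}$ is a good cylinder object for $X$.

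With this cylinder in hand, I would produce the homotopy by a second lift. The maps $\operatorname{id}_{X}$ and $s\psi$ both satisfy $\psi \circ \operatorname{id}_{X} = \psi = (\psi s)\psi = \psi \circ s\psi$, so the square with top edge $(\operatorname{id}_{X}, s\psi) \colon X \sqcup X \rightarrow X$, left edge $\iota$, right edge $\psi$, and bottom edge $\psi \circ \operatorname{pr}_{X} \colon X \times E^{t} \rightarrow Y$ commutes. Since $\iota$ is a monomorphism of Segal precategories, the hypothesis on $\psi$ yields a lift $H \colon X \times E^{t} \rightarrow X$ restricting to $\operatorname{id}_{X}$ and $s\psi$ on the two ends. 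As $X$ is cofibrant and $X \times E^{t}$ is a good cylinder, $H$ exhibits $\operatorname{id}_{X}$ and $s\psi$ as left homotopic, hence equal in $\operatorname{Ho}(\mathcal{CSS})$. Combined with $\psi s = \operatorname{id}_{Y}$, this makes $[\psi]$ an isomorphism in $\operatorname{Ho}(\mathcal{CSS})$, so $\psi$ is a weak equivalence in $\mathcal{CSS}$, and Lemma \ref{iffforlc} finishes the proof.

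I would expect the main obstacle to be precisely the choice of cylinder object: one must stay within $\mathcal{S}\mathrm{e}\mathcal{C}\mathrm{at}$ in order to apply the lifting hypothesis, yet one also needs the cylinder's structure map to be a $\mathcal{CSS}$-weak equivalence. Reconciling these two demands is exactly what forces the use of $E^{t}$ in place of $\Delta[1]^{t}$, and what makes the cartesianness of $\mathcal{CSS}$ essential to the argument. A secondary point to verify carefully is that products with $E^{t}$ preserve discreteness of the space of $0$-simplices, so that all the objects and maps appearing genuinely live in $\mathcal{S}\mathrm{e}\mathcal{C}\mathrm{at}$.
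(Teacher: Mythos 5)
Your proof is correct, but there is nothing in the paper to compare it against: the paper does not prove this lemma at all, it imports it verbatim as Lemma 5.9 of \cite{Bergner2007}. So your proposal supplies an actual argument where the paper defers to the literature.

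The argument holds up under scrutiny. The reduction through Lemma \ref{iffforlc} is legitimate and non-circular, since that lemma rests only on Proposition \ref{localizationofsepre} and not on the statement being proved. The section $s$ exists because $\emptyset \rightarrow Y$ is a monomorphism of Segal precategories. The cylinder is well chosen: $(X \times E^{t})_{0} = X_{0} \times E_{0}$ is discrete, so $X \times E^{t}$ lies in $\mathcal{S}\mathrm{e}\mathcal{C}\mathrm{at}$; the two vertices of $E$ give a monomorphism $\iota \colon X \sqcup X \rightarrow X \times E^{t}$, so the lifting hypothesis applies to it; and $\operatorname{pr}_{X} \colon X \times E^{t} \rightarrow X$ is a weak equivalence in $\mathcal{CSS}$ because $E^{t} \rightarrow \Delta[0]^{t}$ belongs to Rezk's localizing set and $X \times (-)$ is left Quillen (cartesianness from Theorem \ref{cssegalspacecartesian} plus cofibrancy of every object). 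Your appeal to Ken Brown's lemma here is exactly right, and it is genuinely needed, since $E^{t} \rightarrow \Delta[0]^{t}$ is not a monomorphism and hence not itself an acyclic cofibration. Two points deserve to be made explicit. First, the conclusion that $\operatorname{id}_{X}$ and $s\psi$ agree in $\operatorname{Ho}(\mathcal{CSS})$ needs no homotopy-relation machinery beyond the localization functor $\gamma$: from $\operatorname{pr}_{X} \circ i_{0} = \operatorname{pr}_{X} \circ i_{1} = \operatorname{id}_{X}$ and the invertibility of $\gamma(\operatorname{pr}_{X})$ one gets $\gamma(i_{0}) = \gamma(i_{1})$, whence $\gamma(\operatorname{id}_{X}) = \gamma(H)\gamma(i_{0}) = \gamma(H)\gamma(i_{1}) = \gamma(s\psi)$. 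Second, the very last step, namely that a morphism which becomes an isomorphism in the homotopy category is a weak equivalence, is the saturation property of model categories (Hovey, \emph{Model Categories}, Theorem 1.2.10); it is standard but not formal, so it should be cited rather than asserted. One cosmetic slip: your claim that $X \times \Delta[1]^{t} \rightarrow X$ is not a $\mathcal{CSS}$-equivalence fails for $X = \emptyset$, but nothing in the argument depends on it.
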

We describe the model category for Segal categories that we mainly use; the simplicial structure is established in \cite[Proposition 6.3]{Bergner2017}, and the fibrant objects are described in \cite[Corollary 5.13]{Bergner2007} and \cite[Theorem 3.2]{Bergner2007_2}.

\begin{theorem}[{{\cite[Theorem 5.1]{Bergner2007}}; {\cite[Theorem 6.4.4]{Pellissier2003}}}]\label{secatcmodel} There is a combinatorial, left proper, and simplicial model structure $\mathcal{S}\mathrm{e}\mathcal{C}\mathrm{at}_{c}$ on the category of Segal precategories in which a map $f$ is

\begin{enumerate}

\item a weak equivalence if and only if $f$ is a Dwyer-Kan equivalence; and

\item a cofibration if and only if $f$ is a monomorphism.

\end{enumerate}
A Segal precategory $X$ is fibrant in $\mathcal{S}\mathrm{e}\mathcal{C}\mathrm{at}_{c}$ if and only if $X$ is a Reedy fibrant Segal category.
\end{theorem}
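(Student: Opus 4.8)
The plan is to realize the model structure $\mathcal{S}\mathrm{e}\mathcal{C}\mathrm{at}_{c}$ as the left transfer of $\mathcal{CSS}$ along the inclusion of Segal precategories, rather than reconstructing it by hand. The category $\mathcal{S}\mathrm{e}\mathcal{C}\mathrm{at}$ is the full subcategory of $\mathcal{SS}\mathrm{ets}^{\Delta^{\mathrm{op}}}$ cut out by the condition that the space of $0$-simplices be discrete; since a limit or colimit of discrete simplicial sets is again discrete, this subcategory is closed under all limits and colimits, hence is locally presentable, and the inclusion $I \colon \mathcal{S}\mathrm{e}\mathcal{C}\mathrm{at} \rightarrow \mathcal{SS}\mathrm{ets}^{\Delta^{\mathrm{op}}}$ preserves colimits. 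By the adjoint functor theorem $I$ is therefore a left adjoint, giving an adjunction $I \colon \mathcal{S}\mathrm{e}\mathcal{C}\mathrm{at} \rightleftarrows \mathcal{SS}\mathrm{ets}^{\Delta^{\mathrm{op}}} \colon C$ in which the target carries the combinatorial model structure $\mathcal{CSS}$ of Theorem \ref{compsegalspacemodellbl}.

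First I would invoke the existence criterion for left-induced model structures of \cite{BHKKRS2015}. Because $\mathcal{CSS}$ is combinatorial and $\mathcal{S}\mathrm{e}\mathcal{C}\mathrm{at}$ is locally presentable, the left-transferred structure---whose weak equivalences and cofibrations are the maps $f$ for which $I(f)$ is a weak equivalence, respectively a cofibration, in $\mathcal{CSS}$---exists as soon as the acyclicity condition holds, namely that every map with the right lifting property against all cofibrations is a weak equivalence. Since $I$ is the inclusion, $I(f)=f$, so a cofibration is exactly a monomorphism of Segal precategories, giving clause (2); and $f$ is a weak equivalence in $\mathcal{CSS}$ if and only if $L_{c}(f)$ is, by Lemma \ref{iffforlc}, giving clause (1). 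The acyclicity condition is then precisely Lemma \ref{technical}: a map with the right lifting property against all monomorphisms of Segal precategories is sent by $L_{c}$ to a weak equivalence in $\mathcal{CSS}$, hence is a weak equivalence in the transferred structure. This single step absorbs the technical core of the construction and is what simplifies Bergner's original argument.

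The transferred structure is automatically combinatorial. Left properness I would check directly: given a pushout in $\mathcal{S}\mathrm{e}\mathcal{C}\mathrm{at}$ of a weak equivalence along a cofibration, the colimit-preserving functor $I$ carries it to a pushout in $\mathcal{CSS}$ of a weak equivalence along a monomorphism, which is a weak equivalence since $\mathcal{CSS}$ is left proper; as $I$ detects weak equivalences, the original pushout is one too. For the simplicial structure and, crucially, for the identification of the fibrant objects, I would observe that the cofibrations and weak equivalences just described coincide with those of the model category produced independently in the references of the theorem; since a model structure is determined by its cofibrations and weak equivalences, the two structures agree, so $\mathcal{S}\mathrm{e}\mathcal{C}\mathrm{at}_{c}$ is simplicial and its fibrant objects are exactly the Reedy fibrant Segal categories.

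The main obstacle is this last point. Left-induced model structures carry no tractable description of their fibrations, so that the fibrant objects are the Reedy fibrant Segal categories is not formal from the transfer alone. My route is to import it from the independently established model structure via the coincidence of cofibrations and weak equivalences above; a self-contained alternative would use the functorial replacement $L_{c}$ of Proposition \ref{localizationofsepre}, which exhibits every object as weakly equivalent to a Reedy fibrant Segal category, together with the fiber decomposition of Proposition \ref{disjointoffibers}, to argue directly that a fibrant object must already be a Reedy fibrant Segal category.
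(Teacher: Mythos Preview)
The paper does not give its own proof of this theorem: it is stated as a cited result and used as a black box. What the paper \emph{does} do, in Theorem~\ref{vialeftransfer}, is exactly your argument---it applies the left-transfer criterion of \cite{BHKKRS2015} to the adjunction $I \colon \mathcal{S}\mathrm{e}\mathcal{C}\mathrm{at} \rightleftarrows \mathcal{CSS} \colon R$, verifies the acyclicity condition via Lemma~\ref{technical} and Lemma~\ref{iffforlc}, and then observes that the resulting model structure has the same cofibrations and weak equivalences as the cited $\mathcal{S}\mathrm{e}\mathcal{C}\mathrm{at}_{c}$, hence coincides with it. So your proposal matches the paper's approach in that later theorem essentially line for line, and you are right that this simplifies Bergner's original construction.

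You have also correctly identified the one genuine limitation: the left-transfer machinery gives no direct handle on the fibrations, so the characterization of fibrant objects as Reedy fibrant Segal categories (and the simplicial enrichment) is not produced by the transfer itself. The paper handles this exactly as you do---by comparing with the independently established model structure from the cited references---so the circularity you flag is present in the paper's treatment as well, and is simply accepted because Theorem~\ref{secatcmodel} is being quoted, not reproved from scratch. Your sketch of a self-contained alternative via $L_{c}$ is a reasonable direction but is not carried out in the paper either.
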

Bergner also develops a Quillen equivalent model category for levelwise fibrant Segal categories; its left properness is proven in \cite[Proposition 6.5.6]{Bergner2018}, and its fibrant objects are described in \cite[Theorem 4.2]{Bergner2007_2}.

\begin{theorem}[{{\cite[Theorems 7.1 and 7.5]{Bergner2007}}}]\label{secatfmodel} There is a combinatorial and left proper model structure $\mathcal{S}\mathrm{e}\mathcal{C}\mathrm{at}_{f}$ on the category of Segal precategories in which

\begin{enumerate}

\item a map $f$ is a weak equivalence if and only if $f$ is a Dwyer-Kan equivalence; and

\item the fibrant objects are the levelwise fibrant Segal categories.

\end{enumerate}
Moreover, the adjunction $\operatorname{id} \colon \mathcal{S}\mathrm{e}\mathcal{C}\mathrm{at}_{f} \rightleftarrows \mathcal{S}\mathrm{e}\mathcal{C}\mathrm{at}_{c} \colon \operatorname{id}$ is a Quillen equivalence.
\end{theorem}

\subsection{The equivalence with complete Segal spaces}\label{section5.3}
In \cite{Bergner2007}, Bergner gives a Quillen equivalence between complete Segal spaces and Segal categories by means of a right adjoint $R$ to the inclusion $I \colon \mathcal{S}\mathrm{e}\mathcal{C}\mathrm{at}_{c} \rightarrow \mathcal{CSS}$. This right adjoint sends a simplicial space $W$ to the Segal precategory $R(W)$ defined as the pullback
\[\begin{tikzcd}[row sep=3mm, column sep=3mm, ampersand replacement=\&]
	{R(W)} \&\& {\operatorname{cosk}_{0}(W_{0, 0})} \\
	\\
	W \&\& {\operatorname{cosk}_{0}(W_{0}).}
	\arrow[from=1-1, to=1-3]
	\arrow[from=1-1, to=3-1]
	\arrow[from=1-3, to=3-3]
	\arrow[from=3-1, to=3-3]
\end{tikzcd}\]

\begin{theorem}[{\cite[Theorem 6.3]{Bergner2007}}]\label{quilleneqinfty1} The adjunction $I \colon \mathcal{S}\mathrm{e}\mathcal{C}\mathrm{at}_{c} \rightleftarrows \mathcal{CSS} \colon R$ is a Quillen equivalence.
\end{theorem}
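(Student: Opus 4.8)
The plan is to exhibit $\mathcal{S}\mathrm{e}\mathcal{C}\mathrm{at}_{c}$ as the left transfer of $\mathcal{CSS}$ along $I \dashv R$ and then reduce the Quillen equivalence to a single property of the counit. First I would check that $(I, R)$ is a Quillen adjunction by showing that $I$ is a left Quillen functor. The cofibrations in both $\mathcal{S}\mathrm{e}\mathcal{C}\mathrm{at}_{c}$ and $\mathcal{CSS}$ are precisely the monomorphisms (Theorems \ref{secatcmodel} and \ref{compsegalspacemodellbl}), and $I$ is fully faithful and sends monomorphisms to monomorphisms, so $I$ preserves cofibrations; moreover Proposition \ref{sameweqs} says that a map of Segal precategories is a weak equivalence in $\mathcal{S}\mathrm{e}\mathcal{C}\mathrm{at}_{c}$ if and only if its image under $I$ is a weak equivalence in $\mathcal{CSS}$, so $I$ preserves (indeed creates) weak equivalences. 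In particular $I$ preserves acyclic cofibrations, so it is left Quillen, and this simultaneously identifies $\mathcal{S}\mathrm{e}\mathcal{C}\mathrm{at}_{c}$ as the model structure left-induced from $\mathcal{CSS}$ along $I$.

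Next I would reduce the Quillen equivalence to the counit. Since the cofibrations are the monomorphisms, the empty Segal precategory is initial and every object of $\mathcal{S}\mathrm{e}\mathcal{C}\mathrm{at}_{c}$ is cofibrant, so no cofibrant replacement is needed; and by Proposition \ref{sameweqs} the functor $I$ also \emph{reflects} weak equivalences. Given a Segal precategory $X$ and a complete Segal space $Y$, an adjunct pair $\phi \colon I(X) \to Y$ and $\widetilde{\phi} \colon X \to R(Y)$ is related by the factorization $\phi = \varepsilon_{Y} \circ I(\widetilde{\phi})$, where $\varepsilon_{Y} \colon I(R(Y)) \to Y$ is the counit. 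Thus, provided I can show that $\varepsilon_{Y}$ is a weak equivalence in $\mathcal{CSS}$ for every complete Segal space $Y$, the two-out-of-three axiom shows that $\phi$ is a weak equivalence in $\mathcal{CSS}$ if and only if $I(\widetilde{\phi})$ is, which by Proposition \ref{sameweqs} happens if and only if $\widetilde{\phi}$ is a weak equivalence in $\mathcal{S}\mathrm{e}\mathcal{C}\mathrm{at}_{c}$. This is exactly the condition of Definition \ref{defquilleneqdef}, so everything comes down to the counit.

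The heart of the argument, and the step I expect to be the main obstacle, is therefore to prove that $\varepsilon_{Y} \colon R(Y) \to Y$ is a weak equivalence in $\mathcal{CSS}$ for every complete Segal space $Y$. I would analyze the defining pullback of $R(Y)$: because $Y$ is Reedy fibrant and the right-hand map $\operatorname{cosk}_{0}(Y_{0,0}) \to \operatorname{cosk}_{0}(Y_{0})$ is induced by the discrete inclusion of $0$-simplices, the pullback has discrete space of $0$-simplices $R(Y)_{0} = Y_{0,0}$ and first level $R(Y)_{1} = \coprod_{(x, y)} \operatorname{map}_{Y}(x, y)$, the fibers of $(d_{1}, d_{0}) \colon Y_{1} \to Y_{0} \times Y_{0}$ over pairs of objects. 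An analogous computation in each degree, together with the Segal condition on $Y$, shows that $R(Y)$ is a Segal category (cf.\ Proposition \ref{disjointoffibers}) whose objects and mapping spaces coincide with those of $Y$. Consequently $\varepsilon_{Y}$ is the identity on the object set $Y_{0,0}$ and an equivalence on every mapping space, so it is a Dwyer--Kan equivalence in the sense of Definition \ref{defdkdef}.

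The subtlety is that $\varepsilon_{Y}$ is genuinely \emph{not} a levelwise weak equivalence---completeness of $Y$ forces $Y_{0}$ to be the (typically nondiscrete) space of homotopy equivalences, so $Y_{0,0} \to Y_{0}$ is usually far from an equivalence---and Definition \ref{defdkdef} is stated only for maps of Segal spaces, whereas $R(Y)$ need not be Reedy fibrant. To finish I would pass to the Reedy fibrant Segal category $L_{c}(R(Y))$ of Proposition \ref{localizationofsepre}, which comes with a weak equivalence $R(Y) \to L_{c}(R(Y))$ in $\mathcal{CSS}$; the induced map $L_{c}(R(Y)) \to Y$ is then a Dwyer--Kan equivalence of Segal spaces, hence a weak equivalence in $\mathcal{CSS}$, so $\varepsilon_{Y}$ is a weak equivalence in $\mathcal{CSS}$ by two-out-of-three. (Alternatively one may invoke the completion functor of Theorem \ref{completionfunctor} to compare $\widehat{R(Y)}$ with $Y = \widehat{Y}$.) This establishes the counit condition and completes the proof of the Quillen equivalence.
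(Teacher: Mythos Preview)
The paper does not prove this theorem; it is quoted from \cite[Theorem~6.3]{Bergner2007}, as is the companion statement Theorem~\ref{quilleneqinfty1cor}. So there is no in-paper argument to compare yours against. Your strategy---establish the Quillen adjunction via Proposition~\ref{sameweqs}, then reduce to showing the counit $\varepsilon_Y\colon R(Y)\to Y$ is a $\mathcal{CSS}$-weak equivalence for every complete Segal space $Y$---is exactly Bergner's, and your pullback analysis of $R(Y)$ is the correct computation. You also anticipate the paper's left-transfer viewpoint in Section~\ref{section6.4}.

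There is, however, a genuine gap in your final paragraph. Your detour through $L_c$ to avoid checking Reedy fibrancy of $R(Y)$ does not close: you invoke an ``induced map $L_c(R(Y))\to Y$'', but Proposition~\ref{localizationofsepre} only supplies the zigzag $L_c(R(Y))\leftarrow R(Y)\to Y$, and the right leg is precisely the map whose $\mathcal{CSS}$-acyclicity you are trying to establish, so two-out-of-three gives nothing. The fix is not to detour at all: since $Y$ is Reedy fibrant, the canonical map $Y\to\operatorname{cosk}_0(Y_0)$ is a Reedy fibration, and Reedy fibrations are stable under pullback, so $R(Y)\to\operatorname{cosk}_0(Y_{0,0})$ is a Reedy fibration and $R(Y)$ is Reedy fibrant. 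This is part of what Bergner proves in establishing Theorem~\ref{quilleneqinfty1cor}, and it makes $\varepsilon_Y$ a map of Segal spaces outright. You then need one further input not stated in the paper: that a Dwyer--Kan equivalence between Segal spaces is a weak equivalence in $\mathcal{CSS}$, which is \cite[Theorem~7.7]{Rezk2001}. With Reedy fibrancy of $R(Y)$ and that citation in hand, your argument is complete.
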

In particular, we can replace a complete Segal space by a Segal category up to Dwyer-Kan equivalence.

\begin{theorem}[{\cite[Theorem 6.3]{Bergner2007}}]\label{quilleneqinfty1cor} If $W$ is a complete Segal space, then $R(W)$ is a Reedy fibrant Segal category and the map $R(W) \rightarrow W$ is a Dwyer-Kan equivalence.
\end{theorem}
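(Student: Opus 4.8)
The plan is to establish the two assertions separately: Reedy fibrancy of $R(W)$ follows abstractly from the Quillen equivalence, whereas the Dwyer-Kan equivalence is obtained by directly unwinding the pullback defining $R(W)$. For the first assertion, I would use Theorem \ref{quilleneqinfty1}: as $I \colon \mathcal{S}\mathrm{e}\mathcal{C}\mathrm{at}_{c} \rightleftarrows \mathcal{CSS} \colon R$ is a Quillen equivalence, $R$ is in particular right Quillen, and a right Quillen functor preserves fibrant objects (it preserves the terminal object and fibrations). A complete Segal space $W$ is fibrant in $\mathcal{CSS}$ by Theorem \ref{compsegalspacemodellbl}, so $R(W)$ is fibrant in $\mathcal{S}\mathrm{e}\mathcal{C}\mathrm{at}_{c}$, which by Theorem \ref{secatcmodel} is exactly the assertion that $R(W)$ is a Reedy fibrant Segal category. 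In particular $R(W)$ is a Segal space, so $R(W) \rightarrow W$ is a map of Segal spaces and Definition \ref{defdkdef} applies.

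For the Dwyer-Kan equivalence, I would read the levelwise structure of $R(W)$ off its defining pullback. At level $0$ the pullback gives $R(W)_{0} = W_{0,0}$, a discrete simplicial set, with $R(W)_{0} \rightarrow W_{0}$ the inclusion of vertices; hence $\operatorname{ob}(R(W)) = W_{0,0} = \operatorname{ob}(W)$ and the map on objects is a bijection. At level $1$, since $\operatorname{cosk}_{0}$ of a set is codiscrete, the pullback identifies $R(W)_{1}$ with the part of $W_{1}$ whose two endpoints lie in $W_{0,0}$; as $W_{0,0} \times W_{0,0}$ is discrete this breaks up as $R(W)_{1} \cong \coprod_{(x,y)} \operatorname{map}_{W}(x,y)$. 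Passing to fibers over a pair $(x,y)$ then yields an isomorphism $\operatorname{map}_{R(W)}(x,y) \cong \operatorname{map}_{W}(x,y)$ under which $R(W) \rightarrow W$ is the identity. Here I would invoke that $W$ and $R(W)$ are both Reedy fibrant, so that $W_{1} \rightarrow W_{0} \times W_{0}$ and $R(W)_{1} \rightarrow R(W)_{0} \times R(W)_{0}$ are fibrations and the strict pullbacks defining the mapping spaces compute the intended homotopy types.

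These identifications deliver both clauses of Definition \ref{defdkdef}. Clause (1) holds because $\operatorname{map}_{R(W)}(x,y) \rightarrow \operatorname{map}_{W}(x,y)$ is an isomorphism. For clause (2), the induced functor $\operatorname{Ho}(R(W)) \rightarrow \operatorname{Ho}(W)$ is the identity on the shared object set $W_{0,0}$ and is the isomorphism $\pi_{0} \operatorname{map}_{R(W)}(x,y) \xrightarrow{\cong} \pi_{0} \operatorname{map}_{W}(x,y)$ on morphisms; being bijective on objects and fully faithful, it is an isomorphism of categories, hence an equivalence. Therefore $R(W) \rightarrow W$ is a Dwyer-Kan equivalence.

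I expect the main obstacle to be the level-$1$ pullback computation: one must spell out $\operatorname{cosk}_{0}$ in simplicial spaces and check that pulling $W$ back along its vertex map into $\operatorname{cosk}_{0}(W_{0})$ discretizes $W_{0}$ while leaving each fiber over a vertex-tuple, and so each mapping space, unchanged. A more abstract alternative for clause (2) is to note that the adjoint of the counit $R(W) \rightarrow W$ is the identity of the cofibrant object $R(W)$, so the Quillen equivalence forces $R(W) \rightarrow W$ to be a weak equivalence in $\mathcal{CSS}$ between Segal spaces; one could then finish via Rezk's identification of such weak equivalences with Dwyer-Kan equivalences. The direct pullback argument is preferable here because it is self-contained and does not require that external characterization.
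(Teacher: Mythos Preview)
The paper does not supply its own proof of this statement: both Theorem~\ref{quilleneqinfty1} and Theorem~\ref{quilleneqinfty1cor} are quoted from \cite[Theorem 6.3]{Bergner2007} without argument, so your proposal goes beyond what the paper does.

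Your argument is correct. Deducing that $R(W)$ is a Reedy fibrant Segal category from the fact that $R$ is right Quillen is a legitimate use of Theorem~\ref{quilleneqinfty1}; be aware, though, that in Bergner's own development the logical order is reversed---one first shows directly that $R$ preserves fibrant objects and that $R(W) \to W$ is a weak equivalence, and from this deduces the Quillen equivalence---so in that original context your first step would be circular, whereas within this paper both facts are simply imported and the implication you draw is fine. Your level-by-level reading of the pullback is also correct: at level $n$ one has $R(W)_n$ as the pullback of $W_n \to W_0^{\,n+1} \leftarrow W_{0,0}^{\,n+1}$, which gives $R(W)_0 = W_{0,0}$ and identifies each fiber of $R(W)_1 \to R(W)_0 \times R(W)_0$ with the corresponding mapping space of $W$, whence $\operatorname{Ho}(R(W)) \to \operatorname{Ho}(W)$ is an isomorphism of categories as you say. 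Your alternative abstract route---using that the counit is the adjunct of an identity and invoking Rezk's identification of $\mathcal{CSS}$-weak equivalences between Segal spaces with Dwyer-Kan equivalences---is in fact closer in spirit to how Bergner packages the result.
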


\subsection{An approach by way of left transfer}\label{section5.4}
We show that the adjunction from Theorem \ref{quilleneqinfty1} allows us to left-induce the model category $\mathcal{S}\mathrm{e}\mathcal{C}\mathrm{at}_{c}$ from the model category $\mathcal{CSS}$ in the sense of the following theorem.

\begin{theorem}[{\cite[Theorem 2.23]{BHKKRS2015}}]\label{lefttransfertool} Let $F \colon \mathcal{D} \rightleftarrows \mathcal{M} \colon G$ be an adjunction, where $\mathcal{D}$ is a locally presentable category and $\mathcal{M}$ is a combinatorial model category with class of cofibrations $C$. Suppose that, if a morphism $\psi$ in $\mathcal{D}$ has the right lifting property against the class of maps $F^{-1}(C)$, then $F(\psi)$ is a weak equivalence. Then, there is a combinatorial model structure on $\mathcal{D}$, called the \emph{left-induced model structure}, in which a map $\psi$ is

\begin{enumerate}

\item a weak equivalence if and only if $F(\psi)$ is a weak equivalence in $\mathcal{M}$; and

\item a cofibration if and only if $F(\psi)$ is a cofibration in $\mathcal{M}$.

\end{enumerate}

\end{theorem}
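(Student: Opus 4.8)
The plan is to realise the asserted model structure as an instance of Jeff Smith's recognition theorem for combinatorial model categories, in its accessible form, exploiting that $\mathcal{D}$ is locally presentable, that $\mathcal{M}$ is combinatorial, and that $F$, being a left adjoint, preserves all colimits and in particular filtered colimits. First I would fix the three classes on $\mathcal{D}$: call $\psi$ a weak equivalence when $F(\psi)$ lies in the class $\mathcal{W}_{\mathcal{M}}$ of weak equivalences of $\mathcal{M}$, call $\psi$ a cofibration when $F(\psi) \in C$, and take the fibrations to be the maps with the right lifting property against the maps that are simultaneously cofibrations and weak equivalences. Write $\mathcal{W}_{\mathcal{D}} = F^{-1}(\mathcal{W}_{\mathcal{M}})$ and $\mathcal{C}_{\mathcal{D}} = F^{-1}(C)$. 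Completeness and cocompleteness of $\mathcal{D}$ are immediate from local presentability, and the $2$-out-of-$3$ property and retract-closure of $\mathcal{W}_{\mathcal{D}}$ follow at once from the corresponding properties of $\mathcal{W}_{\mathcal{M}}$ and the functoriality of $F$.

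The technical heart is to equip $\mathcal{D}$ with the two weak factorization systems of a model structure, and the obstruction is that $\mathcal{C}_{\mathcal{D}} = F^{-1}(C)$ is detected by the \emph{left} adjoint $F$: unlike the classical right-induced (Kan) transfer, where one simply applies $F$ to a generating set of (acyclic) cofibrations of $\mathcal{M}$, there is no evident set generating $\mathcal{C}_{\mathcal{D}}$. I would therefore pass to the theory of accessible weak factorization systems. Since $\mathcal{M}$ is combinatorial, its cofibrations $C$ form the left class of a cofibrantly generated, hence accessible, weak factorization system; because $F$ preserves all colimits, this system can be left-induced along $F \dashv G$, so that on the locally presentable category $\mathcal{D}$ the class $\mathcal{C}_{\mathcal{D}}$ is the left class of an accessible weak factorization system $(\mathcal{C}_{\mathcal{D}}, \mathcal{R})$, whose functorial factorizations are guaranteed by the accessibility machinery even in the absence of a generating set. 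Here $\mathcal{R}$ denotes the class of maps with the right lifting property against $\mathcal{C}_{\mathcal{D}}$.

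Next I would record that $\mathcal{W}_{\mathcal{D}}$ is accessible and accessibly embedded in the arrow category $\mathcal{D}^{\rightarrow}$: this holds because $\mathcal{W}_{\mathcal{M}}$ enjoys the same property in $\mathcal{M}^{\rightarrow}$, a standard feature of combinatorial model categories, and because $F$ preserves filtered colimits, so the preimage under the induced functor on arrow categories remains accessible. At this point the acyclicity hypothesis enters decisively, for it states precisely that $\mathcal{R} \subseteq \mathcal{W}_{\mathcal{D}}$; thus the right class of the first system consists of weak equivalences and is identified with the acyclic fibrations. Feeding the accessible class $\mathcal{W}_{\mathcal{D}}$, the accessible system $(\mathcal{C}_{\mathcal{D}}, \mathcal{R})$, and this acyclicity condition into the accessible form of Smith's recognition theorem yields the second weak factorization system, with left class $\mathcal{C}_{\mathcal{D}} \cap \mathcal{W}_{\mathcal{D}}$ and right class the fibrations; the lifting and factorization axioms then hold formally, and the resulting structure is combinatorial by construction. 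I expect the production of this second factorization system — equivalently, the verification of the hypotheses of the accessible recognition theorem and the construction of acyclic-cofibration / fibration factorizations — to be the main obstacle, since it is exactly where one must abandon cofibrant generation by a set and rely on the accessibility arguments of Makkai--Rosick\'y and their descendants.
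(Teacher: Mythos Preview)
The paper does not supply its own proof of this theorem: it is quoted verbatim as \cite[Theorem 2.23]{BHKKRS2015} and used as a black box in the proofs of Theorem \ref{vialeftransfer} and Proposition \ref{wellnotquite}. There is therefore nothing in the paper to compare your proposal against.

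That said, your outline is a faithful sketch of how the cited result is actually established in \cite{BHKKRS2015}. The essential ingredients you identify --- left-inducing an accessible weak factorization system along the cocontinuous left adjoint $F$, accessibility of $F^{-1}(\mathcal{W}_{\mathcal{M}})$ via preservation of filtered colimits, the acyclicity condition $\mathcal{R} \subseteq \mathcal{W}_{\mathcal{D}}$, and an accessible version of Smith's recognition theorem --- are precisely the machinery deployed there (building on Garner's algebraic small object argument and the Makkai--Rosick\'y accessibility results you allude to). So your proposal is correct in spirit and in its identification of the main obstacle, namely the construction of the second weak factorization system without a generating set for $\mathcal{C}_{\mathcal{D}}$; it simply is not something the present paper undertakes.
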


\begin{theorem}\label{vialeftransfer} The model category $\mathcal{S}\mathrm{e}\mathcal{C}\mathrm{at}_{c}$ is left-induced from the model category $\mathcal{CSS}$ by means of the adjunction $I \colon \mathcal{S}\mathrm{e}\mathcal{C}\mathrm{at} \rightleftarrows \mathcal{CSS} \colon R$.
\end{theorem}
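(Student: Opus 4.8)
The plan is to apply the left-transfer criterion of Theorem \ref{lefttransfertool} to the adjunction $I \colon \mathcal{S}\mathrm{e}\mathcal{C}\mathrm{at} \rightleftarrows \mathcal{CSS} \colon R$, thereby producing a left-induced model structure on $\mathcal{S}\mathrm{e}\mathcal{C}\mathrm{at}$, and then to identify that structure with $\mathcal{S}\mathrm{e}\mathcal{C}\mathrm{at}_{c}$ by matching the two defining classes of morphisms. In the notation of Theorem \ref{lefttransfertool} we take $F=I$, the fully faithful inclusion, and $\mathcal{M}=\mathcal{CSS}$, whose underlying category is $\mathcal{SS}\mathrm{ets}^{\Delta^{\mathrm{op}}}$ and whose class $C$ of cofibrations is the monomorphisms (Theorem \ref{compsegalspacemodellbl}).

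First I would check the standing hypotheses of the tool. The model category $\mathcal{CSS}$ is combinatorial by Theorem \ref{compsegalspacemodellbl}. For the source, I would verify that $\mathcal{S}\mathrm{e}\mathcal{C}\mathrm{at}$ is locally presentable: it is the full subcategory of the presheaf category $\mathcal{SS}\mathrm{ets}^{\Delta^{\mathrm{op}}}$ cut out by the condition that $X_0$ be discrete, and since the full subcategory of discrete simplicial sets is closed under limits and filtered colimits in $\mathcal{SS}\mathrm{ets}$ (these being computed levelwise and preserved by the constant-set functor), $\mathcal{S}\mathrm{e}\mathcal{C}\mathrm{at}$ is an accessibly embedded, limit-closed subcategory of a locally presentable category, hence locally presentable. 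I would then identify $I^{-1}(C)$: because $I$ is fully faithful and both preserves and reflects monomorphisms (which are detected levelwise), $I^{-1}(C)$ is precisely the class of monomorphisms of Segal precategories.

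The crucial input is the acyclicity condition: every map $\psi$ of Segal precategories having the right lifting property against $I^{-1}(C)$ must have $I(\psi)$ a weak equivalence in $\mathcal{CSS}$. Having just identified $I^{-1}(C)$ with the monomorphisms of Segal precategories, this is exactly the content of Lemma \ref{technical}, which tells us that such a $\psi$ has $L_{c}(\psi)$ a weak equivalence in $\mathcal{CSS}$, combined with Lemma \ref{iffforlc}, which upgrades this to $\psi$ itself---that is, $I(\psi)$---being a weak equivalence in $\mathcal{CSS}$. With this verified, Theorem \ref{lefttransfertool} furnishes a combinatorial model structure on $\mathcal{S}\mathrm{e}\mathcal{C}\mathrm{at}$ in which $\psi$ is a weak equivalence if and only if $I(\psi)$ is a weak equivalence in $\mathcal{CSS}$, and $\psi$ is a cofibration if and only if $I(\psi)$ is a cofibration in $\mathcal{CSS}$, i.e.\ if and only if $\psi$ is a monomorphism.

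It remains to identify this left-induced structure with $\mathcal{S}\mathrm{e}\mathcal{C}\mathrm{at}_{c}$. The cofibrations of both are exactly the monomorphisms of Segal precategories, so they agree on the nose. For the weak equivalences, the left-induced weak equivalences are the maps $\psi$ that are weak equivalences in $\mathcal{CSS}$, and by Proposition \ref{sameweqs} these coincide with the weak equivalences of $\mathcal{S}\mathrm{e}\mathcal{C}\mathrm{at}_{c}$. Since a model structure is determined by its weak equivalences together with its cofibrations (Remark \ref{defmodelcatdefrmk2}), the left-induced model structure equals $\mathcal{S}\mathrm{e}\mathcal{C}\mathrm{at}_{c}$, which is the claim; its left properness and simplicial structure come along automatically because the model structures are literally the same. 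I do not expect a genuine obstacle here: the proof is essentially a reorganization of Bergner's results, and the one point requiring care is the recognition that the right-lifting hypothesis of Theorem \ref{lefttransfertool} matches Bergner's technical Lemma \ref{technical} precisely, so that the bulk of her argument is repackaged as a single verification of the transfer criterion---which is exactly the promised simplification.
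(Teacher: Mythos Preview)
Your proposal is correct and follows essentially the same approach as the paper: verify the acyclicity condition of Theorem \ref{lefttransfertool} via Lemma \ref{technical} together with Lemma \ref{iffforlc}, then identify the resulting left-induced structure with $\mathcal{S}\mathrm{e}\mathcal{C}\mathrm{at}_{c}$ by matching cofibrations and invoking Proposition \ref{sameweqs}. The only difference is that you spell out the standing hypotheses (local presentability of $\mathcal{S}\mathrm{e}\mathcal{C}\mathrm{at}$, the identification of $I^{-1}(C)$ with monomorphisms) which the paper leaves implicit.
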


\begin{proof} Firstly, we verify that the condition in Theorem \ref{lefttransfertool} holds. If a map $\psi$ of Segal precategories has the right lifting property against the class of monomorphisms of Segal precategories, then $\psi$ is a Dwyer-Kan equivalence by Lemma \ref{technical}, and therefore $\psi$ is a weak equivalence in $\mathcal{CSS}$ by Lemma \ref{iffforlc}. The resulting left-induced model structure on the category $\mathcal{S}\mathrm{e}\mathcal{C}\mathrm{at}$ of Segal precategories has the same cofibrations as $\mathcal{S}\mathrm{e}\mathcal{C}\mathrm{at}_{c}$, as well as the same weak equivalences by Lemma \ref{iffforlc}. Hence, the left-induced model structure on $\mathcal{S}\mathrm{e}\mathcal{C}\mathrm{at}$ is $\mathcal{S}\mathrm{e}\mathcal{C}\mathrm{at}_{c}$.
\end{proof}

\section{Rational Segal categories}\label{section6}
Let $T$ be a set of simplicial maps. We introduce {$T$-local Segal categories}, which model $(\infty, 1)$-categories enriched in $T$-local spaces. In particular, we introduce rational Segal categories, which model rational $(\infty, 1)$-categories. We localize Bergner's model category for Reedy fibrant Segal categories to get a model category whose fibrant objects are the Reedy fibrant $T$-local Segal categories. Thus, we produce a model category whose fibrant objects are the Reedy fibrant rational Segal categories. The same localization yields a Quillen equivalent model category whose fibrant objects are the levelwise fibrant $T$-local Segal categories. In particular, we get a Quillen equivalent model category whose fibrant objects are the levelwise fibrant rational Segal categories.

\begin{definition}\label{defmlocalsecatdef} Let $T$ be a set of simplicial maps. A Segal category $X$ is \emph{$T$-local} if, for any two objects $x$ and $y$ of $X$, their mapping space $\operatorname{map}_{X}(x, y)$ is $T$-local. A Segal category $X$ is \emph{rational} if, for any two objects $x$ and $y$ of $X$, their mapping space $\operatorname{map}_{X}(x, y)$ is rational.
\end{definition}
Proposition \ref{disjointoffibers} implies that a Segal category $X$ is $T$-local if and only if $X$ is levelwise $T$-local, and we can encode the latter condition in the language of function complexes. To that end, we use the fact that the inclusion of Segal precategories into simplicial spaces has a left adjoint that takes in a simplicial space $X$ and collapses the space of $0$-simplices to its path components to output a Segal precategory $X_{r}$. Since Reedy fibrancy implies levelwise fibrancy \cite[Corollary 15.3.12(2)]{Hirschhorn2003}, we formulate our characterization under levelwise fibrancy.

\begin{proposition}\label{charlocalsecats} Let $T$ be a set of simplicial maps. Let $X$ be a levelwise fibrant Segal category. The following statements are equivalent.

\begin{enumerate}

\item For every $i \geq 0$ and every $\lambda$ in $T$, the map $\operatorname{Map}_{\mathcal{SS}\mathrm{ets}}(\lambda, X_{i})$ is a weak equivalence.

\item For every $i \geq 0$ and every $\lambda$ in $T$, the map $\operatorname{Map}_{\mathcal{SS}\mathrm{ets}^{\Delta^{\mathrm{op}}}}(\Delta[i]^{t} \times \lambda, X)$ is a weak equivalence.

\item For every $i \geq 0$ and every $\lambda$ in $T$, the map $\operatorname{Map}_{\mathcal{SS}\mathrm{ets}^{\Delta^{\mathrm{op}}}}((\Delta[i]^{t} \times \lambda)_{r}, X)$ is a weak equivalence.

\end{enumerate}

\end{proposition}

\begin{proof} Firstly, (1) and (2) are equivalent by Corollary \ref{usein8.2}. Then, (2) and (3) are equivalent by adjointness.
\end{proof}
Proposition \ref{charlocalsecats} instructs us to localize Bergner's model categories for Segal categories at the set \[\widetilde{T}=\{(\Delta[i]^{t} \times \lambda)_{r} \mid i \geq 0 \textrm{ and } \lambda \in T\}.\]

\begin{theorem}\label{ididitididitnonsconec} Let $T$ be a set of simplicial maps. There is a combinatorial, left proper, and simplicial model structure $\mathcal{L}_{T}\mathcal{S}\mathrm{e}\mathcal{C}\mathrm{at}_{c}$ on the category of Segal precategories in which a map $f$ is

\begin{enumerate}

\item a weak equivalence if and only if, for every Reedy fibrant $T$-local Segal category $X$, the induced map $\operatorname{Map}_{\mathcal{SS}\mathrm{ets}^{\Delta^{\mathrm{op}}}}(f, X)$ is a weak equivalence; and

\item a cofibration if and only if $f$ is a monomorphism.

\end{enumerate}
A Segal precategory $X$ is fibrant in $\mathcal{L}_{T}\mathcal{S}\mathrm{e}\mathcal{C}\mathrm{at}_{c}$ if and only if $X$ is a Reedy fibrant $T$-local Segal category.
\end{theorem}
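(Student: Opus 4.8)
The plan is to realize $\mathcal{L}_{M^{-1}\mathbb{Z}}\mathcal{S}\mathrm{e}\mathcal{C}\mathrm{at}_{c}$ as the left Bousfield localization of Bergner's model category $\mathcal{S}\mathrm{e}\mathcal{C}\mathrm{at}_{c}$ at the set $\widetilde{T}_{M^{-1}\mathbb{Z}}$, and then to translate the abstract output of the localization into the concrete descriptions in the statement. First I would invoke Theorem \ref{secatcmodel} to record that $\mathcal{S}\mathrm{e}\mathcal{C}\mathrm{at}_{c}$ is combinatorial, left proper, and simplicial, so that Barwick's Theorem \ref{lblexists} applies to the set $\widetilde{T}_{M^{-1}\mathbb{Z}}$ and produces the localization $\mathcal{L}_{\widetilde{T}_{M^{-1}\mathbb{Z}}}\mathcal{S}\mathrm{e}\mathcal{C}\mathrm{at}_{c}$. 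That theorem directly yields a combinatorial, left proper, and simplicial model structure whose cofibrations are exactly the cofibrations of $\mathcal{S}\mathrm{e}\mathcal{C}\mathrm{at}_{c}$, namely the monomorphisms, giving clause (2). It also identifies the weak equivalences as the $\widetilde{T}_{M^{-1}\mathbb{Z}}$-local equivalences and the fibrant objects as the $\widetilde{T}_{M^{-1}\mathbb{Z}}$-local objects, so the remaining work is to rephrase these two classes.

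For the fibrant objects, I would show that a Segal precategory $X$ is $\widetilde{T}_{M^{-1}\mathbb{Z}}$-local if and only if it is a Reedy fibrant $M$-local Segal category. By Definition \ref{deflblsdef}, $\widetilde{T}_{M^{-1}\mathbb{Z}}$-locality means $X$ is fibrant in $\mathcal{S}\mathrm{e}\mathcal{C}\mathrm{at}_{c}$, hence a Reedy fibrant Segal category by Theorem \ref{secatcmodel}, together with the requirement that $\operatorname{Map}_{\mathcal{SS}\mathrm{ets}^{\Delta^{\mathrm{op}}}}(\psi, X)$ be a weak homotopy equivalence for every generator $\psi \in \widetilde{T}_{M^{-1}\mathbb{Z}}$. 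This requirement is precisely statement (4) of Proposition \ref{charlocalsecats}, which is equivalent to statement (1), namely that $X$ is levelwise $M$-local. Since Proposition \ref{disjointoffibers} shows that a Segal category is $M$-local exactly when it is levelwise $M$-local, the $\widetilde{T}_{M^{-1}\mathbb{Z}}$-local objects are precisely the Reedy fibrant $M$-local Segal categories, as claimed.

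Clause (1) then follows by combining this identification with the description of weak equivalences from Theorem \ref{lblexists}: a map $f$ is a weak equivalence if and only if $\operatorname{Map}(f, K)$ is a weak homotopy equivalence for every $\widetilde{T}_{M^{-1}\mathbb{Z}}$-local $K$, and these $K$ are exactly the Reedy fibrant $M$-local Segal categories just identified. The one point requiring care, and the part I expect to be the main obstacle, is checking that the simplicial mapping space of $\mathcal{S}\mathrm{e}\mathcal{C}\mathrm{at}_{c}$ implicit in Barwick's locality conditions agrees with the simplicial space function complex $\operatorname{Map}_{\mathcal{SS}\mathrm{ets}^{\Delta^{\mathrm{op}}}}$ appearing in both the statement and Proposition \ref{charlocalsecats}. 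I expect this to follow from the fact that Segal precategories form a full subcategory of simplicial spaces that is closed under the cotensor by a constant simplicial set: for Segal precategories $X$ and $Y$, the cotensor $Y^{\Delta[n]}$ is again a Segal precategory, computed as in simplicial spaces, so $\operatorname{Map}_{\mathcal{S}\mathrm{e}\mathcal{C}\mathrm{at}}(X, Y)_{n} = \operatorname{Hom}(X, Y^{\Delta[n]})$ coincides with $\operatorname{Map}_{\mathcal{SS}\mathrm{ets}^{\Delta^{\mathrm{op}}}}(X, Y)_{n}$. With this compatibility in hand, clauses (1) and (2) together with the identification of the fibrant objects assemble into the theorem.
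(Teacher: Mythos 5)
Your proposal is correct and follows essentially the same route as the paper, which obtains the theorem by applying Theorem \ref{lblexists} to $\mathcal{S}\mathrm{e}\mathcal{C}\mathrm{at}_{c}$ (combinatorial, left proper, and simplicial by Theorem \ref{secatcmodel}) at the set $\widetilde{T}_{M^{-1}\mathbb{Z}}$, identifying the $\widetilde{T}_{M^{-1}\mathbb{Z}}$-local objects as the Reedy fibrant $M$-local Segal categories via Proposition \ref{charlocalsecats} together with Proposition \ref{disjointoffibers}. Your closing check that the function complexes of $\mathcal{S}\mathrm{e}\mathcal{C}\mathrm{at}_{c}$ agree with $\operatorname{Map}_{\mathcal{SS}\mathrm{ets}^{\Delta^{\mathrm{op}}}}$, via closure of Segal precategories under cotensors by simplicial sets, settles correctly a compatibility that the paper leaves implicit.
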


\begin{corollary}\label{ididitididitqnonsconec} There is a model category whose fibrant objects are the Reedy fibrant rational Segal categories.
\end{corollary}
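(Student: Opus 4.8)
The corollary is the special case $M = \mathbb{Z} \setminus \{0\}$ of Theorem \ref{ididitididitnonsconec}, in which $M^{-1}\mathbb{Z} = \mathbb{Q}$ and a Reedy fibrant $M$-local Segal category is exactly a Reedy fibrant rational Segal category by Definition \ref{defmlocalsecatdef}; so the whole task reduces to proving that theorem, and the plan I would follow is to realize $\mathcal{L}_{M^{-1}\mathbb{Z}}\mathcal{S}\mathrm{e}\mathcal{C}\mathrm{at}_{c}$ as the left Bousfield localization of Bergner's $\mathcal{S}\mathrm{e}\mathcal{C}\mathrm{at}_{c}$ at the set $\widetilde{T}_{M^{-1}\mathbb{Z}}$. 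Since $\mathcal{S}\mathrm{e}\mathcal{C}\mathrm{at}_{c}$ is combinatorial, left proper, and simplicial by Theorem \ref{secatcmodel}, Theorem \ref{lblexists} applies and yields a model structure $\mathcal{L}_{\widetilde{T}_{M^{-1}\mathbb{Z}}}\mathcal{S}\mathrm{e}\mathcal{C}\mathrm{at}_{c}$, which I rename $\mathcal{L}_{M^{-1}\mathbb{Z}}\mathcal{S}\mathrm{e}\mathcal{C}\mathrm{at}_{c}$. That theorem also hands us, for free, that the localization is combinatorial, left proper, and simplicial, that its cofibrations are exactly the cofibrations of $\mathcal{S}\mathrm{e}\mathcal{C}\mathrm{at}_{c}$---the monomorphisms---and that its weak equivalences are the $\widetilde{T}_{M^{-1}\mathbb{Z}}$-local equivalences. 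Thus clause (2) and the structural assertions are immediate, and all the content lies in identifying the $\widetilde{T}_{M^{-1}\mathbb{Z}}$-local objects.

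First I would prove that a Segal precategory $K$ is $\widetilde{T}_{M^{-1}\mathbb{Z}}$-local if and only if it is a Reedy fibrant $M$-local Segal category. By Definition \ref{deflblsdef}(1), $\widetilde{T}_{M^{-1}\mathbb{Z}}$-locality of $K$ means that $K$ is fibrant in $\mathcal{S}\mathrm{e}\mathcal{C}\mathrm{at}_{c}$ and that $\operatorname{Map}_{\mathcal{SS}\mathrm{ets}^{\Delta^{\mathrm{op}}}}(\psi, K)$ is a weak homotopy equivalence for every $\psi \in \widetilde{T}_{M^{-1}\mathbb{Z}}$. By Theorem \ref{secatcmodel}, the first condition is exactly that $K$ is a Reedy fibrant Segal category, which places us under the standing hypothesis of Proposition \ref{charlocalsecats}. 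The second condition, spelled out over the generators of $\widetilde{T}_{M^{-1}\mathbb{Z}}$, is precisely statement (5) of that proposition, which is equivalent to statement (1), namely that $K$ is levelwise $M$-local. Finally, Proposition \ref{disjointoffibers} identifies levelwise $M$-locality with $M$-locality in the sense of Definition \ref{defmlocalsecatdef}. Chaining these equivalences yields the characterization of the fibrant objects.

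With the fibrant objects pinned down, clause (1) follows by unwinding Definition \ref{deflblsdef}(2): a map $f$ is a weak equivalence in the localization if and only if it is a $\widetilde{T}_{M^{-1}\mathbb{Z}}$-local equivalence, i.e. $\operatorname{Map}_{\mathcal{SS}\mathrm{ets}^{\Delta^{\mathrm{op}}}}(f, K)$ is a weak homotopy equivalence for every $\widetilde{T}_{M^{-1}\mathbb{Z}}$-local $K$, and those $K$ are exactly the Reedy fibrant $M$-local Segal categories by the previous paragraph. This finishes Theorem \ref{ididitididitnonsconec}, and specializing to $M = \mathbb{Z} \setminus \{0\}$ delivers the corollary.

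I do not expect a serious mathematical obstacle here, since the analytic heart of the argument has been front-loaded into Proposition \ref{charlocalsecats}; the main care required is bookkeeping. Specifically, one must match the definition of a $\widetilde{T}_{M^{-1}\mathbb{Z}}$-local object---which already bakes in fibrancy in the unlocalized model category $\mathcal{S}\mathrm{e}\mathcal{C}\mathrm{at}_{c}$---against the standing hypothesis ``$X$ is a Reedy fibrant Segal category'' of Proposition \ref{charlocalsecats}, and one must invoke the levelwise-versus-global $M$-locality equivalence of Proposition \ref{disjointoffibers} at exactly the right step.
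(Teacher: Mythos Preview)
Your proposal is correct and follows exactly the paper's (implicit) argument: the corollary is the $M=\mathbb{Z}\setminus\{0\}$ case of Theorem \ref{ididitididitnonsconec}, which in turn is obtained by applying Theorem \ref{lblexists} to $\mathcal{S}\mathrm{e}\mathcal{C}\mathrm{at}_{c}$ at $\widetilde{T}_{M^{-1}\mathbb{Z}}$ and invoking Proposition \ref{charlocalsecats} to identify the local objects. One cosmetic slip: you refer to ``statement (5)'' of Proposition \ref{charlocalsecats}, but the enumeration in its statement runs (1)--(4); the condition you want is (4), and the mismatch presumably comes from the off-by-one in that proposition's own proof.
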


\begin{remark}\label{reducetormk} Localizing Bergner's model category for levelwise fibrant Segal categories at the same set $\widetilde{T}$ yields a Quillen equivalent model category whose fibrant objects are the levelwise fibrant $T$-local Segal categories. In particular, there is a model category whose fibrant objects are the levelwise fibrant rational Segal categories, and it is Quillen equivalent to the model category for Reedy fibrant rational Segal categories.
\end{remark}

\section{Rational complete Segal spaces}\label{section7}
Let $T$ be a set of simplicial maps. Now, we introduce {$T$-local complete Segal spaces}, which also model $(\infty, 1)$-categories enriched in $T$-local spaces. In particular, we introduce rational complete Segal spaces, which model rational $(\infty, 1)$-categories. We left Bousfield localize Rezk's model category for complete Segal spaces to obtain a model category whose fibrant objects are the $T$-local complete Segal spaces. Thus, we produce a model category whose fibrant objects are the rational complete Segal spaces. Then, we show that our model category for $T$-local complete Segal spaces remains compatible with the cartesian closure of simplicial spaces.

\begin{definition}\label{defmlocalsecatdef} Let $T$ be a set of simplicial maps. A complete Segal space $W$ is \emph{$T$-local} if, for any two objects $x$ and $y$ of $W$, their mapping space $\operatorname{map}_{W}(x, y)$ is $T$-local. A complete Segal space $W$ is \emph{rational} if, for any two objects $x$ and $y$ of $W$, their mapping space $\operatorname{map}_{W}(x, y)$ is rational.
\end{definition}
As complete Segal spaces rarely have a discrete space of $0$-simplices, we have no analog of Proposition \ref{disjointoffibers}. We use Theorem \ref{quilleneqinfty1cor} instead to replace a complete Segal space by a Segal category up to Dwyer-Kan equivalence.

\begin{proposition}\label{charlocalcsss} Let $T$ be a set of simplicial maps. Let $W$ be a complete Segal space. The following statements are equivalent.

\begin{enumerate}

\item The complete Segal space $W$ is $T$-local.

\item The Reedy fibrant Segal category $R(W)$ is $T$-local.

\item For every $i \geq 0$ and every $\lambda \in T$, the map $\operatorname{Map}_{\mathcal{SS}\mathrm{ets}^{\Delta^{\mathrm{op}}}}((\Delta[i]^{t} \times \lambda)_{r}, R(W))$ is a weak equivalence.

\item For every $i \geq 0$ and every $\lambda \in T$, the map $\operatorname{Map}_{\mathcal{SS}\mathrm{ets}^{\Delta^{\mathrm{op}}}}((\Delta[i]^{t} \times \lambda)_{r}, W)$ is a weak equivalence.

\end{enumerate}

\end{proposition}

\begin{proof} Firstly, (1) and (2) are equivalent by the Dwyer-Kan equivalence $R(W) \rightarrow W$ from Theorem \ref{quilleneqinfty1cor}. Then, (2) and (3) are equivalent by Proposition \ref{charlocalsecats}. Lastly, (3) and (4) are equivalent by adjointness.
\end{proof}
Thus, we localize Rezk's model category for complete Segal spaces $\mathcal{CSS}$ at the set $\widetilde{T}$ from Section \ref{section6}.

\begin{theorem}\label{localcssegalmodelcat} Let $T$ be a set of simplicial maps. There is a combinatorial, left proper, and simplicial model structure $\mathcal{L}_{T}\mathcal{CSS}$ on the category of simplicial spaces in which a map $f$ is

\begin{enumerate}

\item a weak equivalence if and only if, for every $T$-local complete Segal space $W$, the map $\operatorname{Map}_{\mathcal{SS}\mathrm{ets}^{\Delta^{\mathrm{op}}}}(f, W)$ is a weak equivalence; and

\item a cofibration if and only if $f$ is a monomorphism.

\end{enumerate}
A simplicial space $W$ is fibrant in $\mathcal{L}_{T}\mathcal{CSS}$ if and only if $W$ is a $T$-local complete Segal space.
\end{theorem}

\begin{corollary}\label{qlocalcssegalmodelcat} There is a model category whose fibrant objects are the rational complete Segal spaces.
\end{corollary}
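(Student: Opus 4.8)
The plan is to obtain Corollary \ref{qlocalcssegalmodelcat} as the special case of Theorem \ref{localcssegalmodelcat} in which $M$ is chosen so that $M^{-1}\mathbb{Z} = \mathbb{Q}$. All of the genuine work — the construction of the localization, the identification of the local objects, and the characterization of the fibrant objects as $M$-local complete Segal spaces — has already been carried out at the level of an arbitrary multiplicative subset $M$. Consequently, nothing new needs to be proved; the task is only to unwind the definitions in the rational case.

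Concretely, I would set $M = \mathbb{Z} \setminus \{0\}$, so that $M^{-1}\mathbb{Z} = \mathbb{Q}$. Under this choice, Definition \ref{deflocalnonscdef} says that an $M$-local space is precisely a space whose higher homotopy groups are rational vector spaces, that is, a rational space. Feeding this back through the definition of $M$-local complete Segal spaces at the start of Section \ref{section8}, an $M$-local complete Segal space — one all of whose mapping spaces are $M$-local — is exactly a rational complete Segal space, namely a complete Segal space all of whose mapping spaces are rational.

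I would then invoke Theorem \ref{localcssegalmodelcat} with this $M$. It produces a model structure $\mathcal{L}_{M^{-1}\mathbb{Z}}\mathcal{CSS} = \mathcal{L}_{\mathbb{Q}}\mathcal{CSS}$ on the category of simplicial spaces whose fibrant objects are precisely the $M$-local complete Segal spaces. By the identification of the previous paragraph, these fibrant objects are exactly the rational complete Segal spaces, which is the desired conclusion. There is no real obstacle here: every nontrivial step lives in Theorem \ref{localcssegalmodelcat} and its supporting results (notably Proposition \ref{charlocalcsss} and the characterization of $M$-local spaces established in Section \ref{section3}), and this corollary simply records the rational instance of that general machinery.
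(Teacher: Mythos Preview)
Your proposal is correct and matches the paper's approach exactly: the corollary is stated without proof because it is the immediate specialization of Theorem \ref{localcssegalmodelcat} to the case $M = \mathbb{Z} \setminus \{0\}$, and your argument just spells this out.
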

We apply the criterion from Proposition \ref{rezkcriterionprop} to show that, for a set of simplicial maps $T$, our model category for $T$-local complete Segal spaces is cartesian. Our argument follows Rezk's proof of \cite[Lemma 10.3]{Rezk2001}, which is part of showing that the model category for complete Segal spaces is cartesian.

\begin{proposition}\label{biggestlemmayet} Let $T$ be a set of simplicial maps. If $W$ is a $T$-local complete Segal space, so is $W^{\Delta[1]^{t}}$.
\end{proposition}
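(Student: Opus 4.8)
The plan is to verify the hypothesis of Proposition \ref{rezkcriterionprop} applied to the localizing set $\widetilde{T}_{M^{-1}\mathbb{Z}}$: namely, that whenever $W$ is $\widetilde{T}_{M^{-1}\mathbb{Z}}$-local, so is $W^{\Delta[1]^{t}}$. Since the fibrant objects of $\mathcal{L}_{M^{-1}\mathbb{Z}}\mathcal{CSS}$ are exactly the $M$-local complete Segal spaces by Theorem \ref{localcssegalmodelcat}, being $\widetilde{T}_{M^{-1}\mathbb{Z}}$-local is the same as being an $M$-local complete Segal space. So the statement to prove is precisely that $W^{\Delta[1]^{t}}$ is an $M$-local complete Segal space whenever $W$ is. First I would record that $\mathcal{L}_{M^{-1}\mathbb{Z}}\mathcal{CSS}$ is a localization of $\mathcal{CSS}$, and that $\mathcal{CSS}$ is already cartesian by Theorem \ref{cssegalspacecartesian}; therefore if $W$ is a complete Segal space, then $W^{\Delta[1]^{t}}$ is again a complete Segal space. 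This reduces the problem to showing that the $M$-locality of mapping spaces is inherited by $W^{\Delta[1]^{t}}$.

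The key step is then to identify the mapping spaces of the exponential $W^{\Delta[1]^{t}}$ in terms of those of $W$. I would use the criterion of Proposition \ref{rezkcriterionprop} directly, so I need to check that $M$-locality (equivalently, condition (4) of Proposition \ref{charlocalcsss}) is preserved under the functor $(-)^{\Delta[1]^{t}}$. The cleanest route is to exploit the exponential adjunction in the cartesian Reedy structure: for any simplicial set $Z$ and any $i$, one has a natural identification of the form
\[
\operatorname{Map}_{\mathcal{SS}\mathrm{ets}^{\Delta^{\mathrm{op}}}}\bigl((\Delta[i]^{t}\times Z)_{r},\, W^{\Delta[1]^{t}}\bigr)\cong \operatorname{Map}_{\mathcal{SS}\mathrm{ets}^{\Delta^{\mathrm{op}}}}\bigl((\Delta[i]^{t}\times Z)_{r}\times \Delta[1]^{t},\, W\bigr),
\]
coming from the tensor-hom adjunction. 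Feeding in $Z=\operatorname{Sing}(S^{n})$ and $Z=\operatorname{Sing}(S^{n}_{M^{-1}\mathbb{Z}})$, the problem becomes showing that the map on the right induced by $\operatorname{Sing}(S^{n})\rightarrow \operatorname{Sing}(S^{n}_{M^{-1}\mathbb{Z}})$ is a weak homotopy equivalence. Since $W$ is $M$-local, this is where I would feed back the characterization in Proposition \ref{charlocalcsss}, after re-expressing the source $(\Delta[i]^{t}\times Z)_{r}\times \Delta[1]^{t}$ in a form that exposes the mapping spaces of $W$ between the relevant objects. The upshot is that the mapping spaces of $W^{\Delta[1]^{t}}$ are built from the mapping spaces of $W$ by operations (products, cotensors by the interval) that preserve $M$-locality, because a product of $M$-local spaces is $M$-local and $M$-locality is homotopy-invariant.

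Following Rezk's proof of \cite[Lemma 10.3]{Rezk2001}, the concrete mechanism is that for objects of $W^{\Delta[1]^{t}}$, which are arrows of $W$ up to homotopy, the mapping space between two such arrows is computed as a homotopy limit (a pullback along the source and target evaluation maps $d_{0},d_{1}\colon \Delta[1]^{t}\to \Delta[0]^{t}$) of mapping spaces of $W$. Explicitly, the mapping space in $W^{\Delta[1]^{t}}$ between two objects sits in a homotopy pullback square whose other three corners are mapping spaces of $W$, which are $M$-local by hypothesis. I would then invoke the fact that a homotopy pullback of $M$-local spaces over an $M$-local space is $M$-local; concretely, the long exact sequence of a fibration and the five lemma show that the higher homotopy groups of the total space are extensions of $M^{-1}\mathbb{Z}$-modules, hence $M^{-1}\mathbb{Z}$-modules, so the total space is $M$-local in the sense of Definition \ref{deflocalnonscdef}.

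The main obstacle I anticipate is the bookkeeping in identifying the mapping spaces of $W^{\Delta[1]^{t}}$ precisely enough to exhibit them as such a homotopy pullback, and in particular making sure that the evaluation-at-endpoints maps genuinely land one in the $M$-local mapping spaces of $W$ rather than in some auxiliary space whose locality is not yet known. The essential input that makes the closure argument go through is the stability of the class of $M$-local spaces under finite homotopy limits over $M$-local bases, which reduces to the purely algebraic fact that $M^{-1}\mathbb{Z}$-modules are closed under kernels, cokernels, and extensions; once this is set up, the rest is a matter of assembling the identifications from the cartesian closure of $\mathcal{CSS}$ and the characterization of $M$-local complete Segal spaces from Proposition \ref{charlocalcsss}.
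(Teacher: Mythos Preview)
Your overall strategy is sound and would yield a correct proof, but it takes a genuinely different route from the paper at the decisive step. Both you and the paper begin identically: invoke Theorem \ref{cssegalspacecartesian} to see that $W^{\Delta[1]^{t}}$ is a complete Segal space, and then unwind the exponential adjunction to reduce to showing that
\[
\operatorname{Map}_{\mathcal{SS}\mathrm{ets}^{\Delta^{\mathrm{op}}}}\bigl((\Delta[1]^{t}\times\Delta[i]^{t}\times\operatorname{Sing}(S^{n}_{M^{-1}\mathbb{Z}}))_{r},\,W\bigr)\longrightarrow \operatorname{Map}_{\mathcal{SS}\mathrm{ets}^{\Delta^{\mathrm{op}}}}\bigl((\Delta[1]^{t}\times\Delta[i]^{t}\times\operatorname{Sing}(S^{n}))_{r},\,W\bigr)
\]
is a weak equivalence. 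From here the paper argues purely combinatorially: it writes $\Delta[1]^{t}\times\Delta[i]^{t}$ as the standard prism colimit of copies of $\Delta[i+1]^{t}$ glued along copies of $\Delta[i]^{t}$ (via explicit order-preserving maps $\gamma^{j},\delta^{j}$), notes that on each piece the induced map lies in $\widetilde{T}_{M^{-1}\mathbb{Z}}$ and is therefore an acyclic cofibration in $\mathcal{L}_{M^{-1}\mathbb{Z}}\mathcal{CSS}$, and concludes that the induced map of colimits is an acyclic cofibration, so mapping into the fibrant object $W$ gives a weak equivalence. Your proposal instead passes to mapping spaces: identify $\operatorname{map}_{W^{\Delta[1]^{t}}}(f,g)$ as a homotopy pullback $\operatorname{map}_{W}(x,x')\times^{h}_{\operatorname{map}_{W}(x,y')}\operatorname{map}_{W}(y,y')$ and invoke closure of $M$-local spaces under homotopy pullbacks via the long exact sequence and the closure of $M^{-1}\mathbb{Z}$-modules under subobjects, quotients, and extensions. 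The paper's route stays entirely within the function-complex characterization already established in Proposition \ref{charlocalcsss} and avoids ever naming a mapping space of $W^{\Delta[1]^{t}}$; your route is more conceptual about \emph{why} $M$-locality persists, at the price of exactly the bookkeeping you anticipate. One small correction: Rezk's actual proof of \cite[Lemma 10.3]{Rezk2001} is the prism-decomposition argument the paper adapts, not the homotopy-pullback description of arrow-category mapping spaces that you attribute to it.
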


\begin{proof} Replacing maps in $T$ by monomorphisms up to weak equivalence if necessary, we assume without loss of generality that $T$ comprises monomorphisms. By Theorem \ref{cssegalspacecartesian}, $W^{\Delta[1]^{t}}$ is a complete Segal space. For $i \geq 0$ and a map $A \rightarrow B$ in $T$, we show that
\[\operatorname{Map}_{\mathcal{SS}\mathrm{ets}^{\Delta^{\mathrm{op}}}}((\Delta[i]^{t} \times B)_{r}, W^{\Delta[1]^{t}}) \rightarrow \operatorname{Map}_{\mathcal{SS}\mathrm{ets}^{\Delta^{\mathrm{op}}}}((\Delta[i]^{t} \times A)_{r}, W^{\Delta[1]^{t}})\] is a weak equivalence. Equivalently, we show that the map
\[\operatorname{Map}_{\mathcal{SS}\mathrm{ets}^{\Delta^{\mathrm{op}}}}((\Delta[1]^{t} \times \Delta[i]^{t} \times B)_{r}, W) \rightarrow \operatorname{Map}_{\mathcal{SS}\mathrm{ets}^{\Delta^{\mathrm{op}}}}((\Delta[1]^{t} \times \Delta[i]^{t} \times A)_{r}, W)\] is a weak equivalence. For $0 \leq j \leq i$, we define the order-preserving map $\gamma^{j} \colon [i+1] \rightarrow [1] \times [i]$ by \[\gamma^{j}(z)=\begin{cases} (0, z), \textrm{ if } z \leq j; \\
(1, z-1), \textrm{ if } z > j.
\end{cases}\]
We also define the order-preserving map $\delta^{j} \colon [i] \rightarrow [1] \times [i]$ by \[\delta^{j}(z)=\begin{cases} (0, z), \textrm{ if } z \leq j; \\
(1, z), \textrm{ if } z > j.
\end{cases}\]
In this notation, the simplicial space $\Delta[1]^{t} \times \Delta[i]^{t}$ is the colimit of the diagram of inclusion maps

\[\begin{tikzcd}[row sep=3mm, column sep=3mm, ampersand replacement=\&]
	{\gamma^{0}(\Delta[i+1]^{t})} \&\& {\delta^{0}(\Delta[i]^{t})} \&\& {\gamma^{1}(\Delta[i+1]^{t})} \&\& {\delta^{1}(\Delta[i]^{t})} \&\& \cdots \&\& {\gamma^{i}(\Delta[i+1]^{t}).}
	\arrow[from=1-3, to=1-1]
	\arrow[from=1-3, to=1-5]
	\arrow[from=1-7, to=1-5]
	\arrow[from=1-7, to=1-9]
	\arrow[from=1-9, to=1-11]
\end{tikzcd}\]
For $0 \leq j \leq i$, because the set $T$ comprises monomorphisms, the inclusion maps \[(\gamma^{j}(\Delta[i+1]^{t}) \times A)_{r} \rightarrow (\gamma^{j}(\Delta[i+1]^{t}) \times B)_{r}\] and \[(\delta^{j}(\Delta[i]^{t}) \times A)_{r} \rightarrow (\delta^{j}(\Delta[i]^{t}) \times B)_{r}\] are acyclic cofibrations in our model category for $T$-local complete Segal spaces, so the induced map of colimits
\[(\Delta[1]^{t} \times \Delta[i]^{t} \times A)_{r} \rightarrow (\Delta[1]^{t} \times \Delta[i]^{t} \times B)_{r}\] is also an acyclic cofibration. As $W$ is a $T$-local complete Segal space, our claim follows.
\end{proof}

\begin{remark}\label{biggestlemmayetrmk} To relax the hypothesis that the set $T$ comprise monomorphisms, we would ultimately need the class of weak equivalences in a model category to be closed under colimits, which is not the case; a counterexample using pushouts of spaces is presented at the start of \cite[\S 10]{DwyerSpalinski1995} to motivate the use of homotopy pushouts.
\end{remark}

\begin{corollary}\label{allarecartesians} Let $T$ be a set of simplicial maps. The model category for $T$-local complete Segal spaces $\mathcal{L}_{T}\mathcal{CSS}$ is cartesian. In particular, the model category for rational complete Segal spaces is cartesian.
\end{corollary}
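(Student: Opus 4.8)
The plan is to apply Rezk's criterion from Proposition \ref{rezkcriterionprop} directly, once we recognize $\mathcal{L}_{M^{-1}\mathbb{Z}}\mathcal{CSS}$ as a single left Bousfield localization of the Reedy model structure on simplicial spaces. The key preliminary observation is that iterating left Bousfield localizations amounts to localizing once at the union of the two sets of maps. By Theorem \ref{compsegalspacemodellbl}, the model category $\mathcal{CSS}$ is the localization of the Reedy model structure at the set $S_{\mathcal{CSS}}=\{G(n)^{t} \rightarrow \Delta[n]^{t} \mid n \geq 2\} \cup \{E^{t} \rightarrow \Delta[0]^{t}\}$, and $\mathcal{L}_{M^{-1}\mathbb{Z}}\mathcal{CSS}$ is in turn the localization of $\mathcal{CSS}$ at $\widetilde{T}_{M^{-1}\mathbb{Z}}$. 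Hence $\mathcal{L}_{M^{-1}\mathbb{Z}}\mathcal{CSS}$ coincides with the localization $\mathcal{L}_{T}\mathcal{SS}\mathrm{ets}^{\Delta^{\mathrm{op}}}$ of the Reedy model structure at the set $T=S_{\mathcal{CSS}} \cup \widetilde{T}_{M^{-1}\mathbb{Z}}$.

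With this identification in hand, I would note that the $T$-local objects are exactly the objects that are simultaneously $S_{\mathcal{CSS}}$-local and $\widetilde{T}_{M^{-1}\mathbb{Z}}$-local; by Theorem \ref{compsegalspacemodellbl} and Theorem \ref{localcssegalmodelcat}, these are precisely the $M$-local complete Segal spaces. To invoke Proposition \ref{rezkcriterionprop}, it remains to verify its hypothesis for this $T$: for every $T$-local simplicial space $K$, the exponential $K^{\Delta[1]^{t}}$ must again be $T$-local. Translating through the identification of $T$-local objects, this says precisely that whenever $W$ is an $M$-local complete Segal space, so is $W^{\Delta[1]^{t}}$, which is exactly the content of Proposition \ref{biggestlemmayet}.

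Thus, Proposition \ref{rezkcriterionprop} applies to the set $T$, and we conclude that $\mathcal{L}_{T}\mathcal{SS}\mathrm{ets}^{\Delta^{\mathrm{op}}}=\mathcal{L}_{M^{-1}\mathbb{Z}}\mathcal{CSS}$ is cartesian. The special case $M=\mathbb{Z} \setminus \{0\}$ gives $M^{-1}\mathbb{Z}=\mathbb{Q}$, so the model category $\mathcal{L}_{\mathbb{Q}}\mathcal{CSS}$ for rational complete Segal spaces is cartesian as well.

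The only genuinely delicate step is the first one: recognizing that the iterated localization of $\mathcal{CSS}$ at $\widetilde{T}_{M^{-1}\mathbb{Z}}$ is a single Reedy localization at the union $T=S_{\mathcal{CSS}} \cup \widetilde{T}_{M^{-1}\mathbb{Z}}$, so that Proposition \ref{rezkcriterionprop}, stated only for localizations of the Reedy model structure, becomes applicable. Once this is settled, the heavy lifting has already been carried out in Proposition \ref{biggestlemmayet}, and the remainder of the argument is a direct appeal to Rezk's criterion.
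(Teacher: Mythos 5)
Your proposal is correct and takes essentially the same route as the paper: Corollary \ref{allarecartesians} is obtained by feeding Proposition \ref{biggestlemmayet} into Rezk's criterion, Proposition \ref{rezkcriterionprop}. The one point you spell out that the paper leaves implicit is the identification of the iterated localization $\mathcal{L}_{M^{-1}\mathbb{Z}}\mathcal{CSS}$ with the single localization of the Reedy model structure at the union $\{G(n)^{t} \rightarrow \Delta[n]^{t} \mid n \geq 2\} \cup \{E^{t} \rightarrow \Delta[0]^{t}\} \cup \widetilde{T}_{M^{-1}\mathbb{Z}}$, whose local objects are exactly the $M$-local complete Segal spaces; this is a legitimate and worthwhile clarification, not a different argument.
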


\section{Comparison of the two models for rational $(\infty, 1)$-categories}\label{section8}
Let $T$ be a set of simplicial maps. We show that the Quillen equivalence between complete Segal spaces and Segal categories descends to one between $T$-local complete Segal spaces and $T$-local Segal categories. Thus, we get a Quillen equivalence between rational complete Segal spaces and rational Segal categories, which captures that both structures are models for rational $(\infty, 1)$-categories. Lastly, we prove that our model category for Reedy fibrant $T$-local Segal categories can be left-induced from that for $T$-local complete Segal spaces.

\subsection{The equivalence of rational complete Segal spaces and rational Segal categories}\label{section8.1}
Let $T$ be a set of simplicial maps. We show that our model category for $T$-local complete Segal spaces $\mathcal{L}_{T}\mathcal{CSS}$ and our model category for Reedy fibrant $T$-local Segal categories $\mathcal{L}_{T}\mathcal{S}\mathrm{e}\mathcal{C}\mathrm{at}_{c}$ are Quillen equivalent. Our objective is to show that the Quillen equivalence $I \colon \mathcal{S}\mathrm{e}\mathcal{C}\mathrm{at}_{c} \rightleftarrows \mathcal{CSS} \colon R$ between Segal categories and complete Segal spaces from Theorem \ref{quilleneqinfty1} descends to a Quillen equivalence $I \colon \mathcal{L}_{T}\mathcal{S}\mathrm{e}\mathcal{C}\mathrm{at}_{c} \rightleftarrows \mathcal{L}_{T}\mathcal{CSS} \colon R$ between $T$-local Segal categories and $T$-local complete Segal spaces. To achieve such a descent, we need to left Bousfield localize both model categories in the Quillen equivalence from Theorem \ref{quilleneqinfty1} in a compatible fashion as follows.

\begin{theorem}[{\cite[Theorem 3.3.20(1)(b)]{Hirschhorn2003}}]\label{compatibleqeqs} Let $F \colon \mathcal{M} \rightleftarrows \mathcal{N} \colon G$ be a Quillen equivalence of combinatorial, left proper, and simplicial model categories. Suppose that every object of $\mathcal{M}$ is cofibrant. Let $T$ be a set of morphisms in $\mathcal{M}$. Then, $F \colon \mathcal{L}_{T}\mathcal{M} \rightleftarrows \mathcal{L}_{F(T)}\mathcal{N} \colon G$ is a Quillen equivalence.
\end{theorem}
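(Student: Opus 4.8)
The plan is to deduce everything from the behaviour of the function complexes under the adjunction, together with the fact that left Bousfield localization leaves the cofibrations—and hence, here, the cofibrant objects, which are all of $\mathcal{M}$—untouched. Write $S=F(T)$. The one tool I would use repeatedly is the simplicial adjunction isomorphism of function complexes $\operatorname{Map}_{\mathcal{N}}(F(A),B)\cong\operatorname{Map}_{\mathcal{M}}(A,G(B))$, which is natural in both variables and, since every object of $\mathcal{M}$ is cofibrant and $G$ carries fibrant objects to fibrant objects, computes the correct derived mapping spaces whenever $B$ is fibrant. First I would record the compatibility of $G$ with locality: if $B$ is $S$-local then $B$ is fibrant, so $G(B)$ is fibrant in $\mathcal{M}$, and for each $\psi\in T$ the adjunction isomorphism identifies $\operatorname{Map}_{\mathcal{M}}(\psi,G(B))$ with $\operatorname{Map}_{\mathcal{N}}(F(\psi),B)$, which is a weak equivalence because $B$ is $S$-local and $F(\psi)\in S$; hence $G(B)$ is $T$-local in the sense of Definition \ref{deflblsdef}. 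This already gives the Quillen adjunction $F\colon\mathcal{L}_{T}\mathcal{M}\rightleftarrows\mathcal{L}_{S}\mathcal{N}\colon G$, since $F$ preserves cofibrations (unchanged by localization) and the same isomorphism shows that for any $T$-local equivalence $\phi$ and any $S$-local $B$ the map $\operatorname{Map}_{\mathcal{N}}(F(\phi),B)\cong\operatorname{Map}_{\mathcal{M}}(\phi,G(B))$ is a weak equivalence, so $F$ sends $T$-local equivalences to $S$-local equivalences and in particular preserves acyclic cofibrations.

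To upgrade this to a Quillen equivalence, I would prove the symmetric preservation statement: if $X$ is $T$-local, then $\mathbf{L}F(X)=F(X)$ is $S$-local. Replacing $F(X)$ by a fibrant model $F(X)^{f}$, the adjunction isomorphism identifies $\operatorname{Map}_{\mathcal{N}}(F(\psi),F(X)^{f})$ with $\operatorname{Map}_{\mathcal{M}}(\psi,G(F(X)^{f}))$ for $\psi\in T$. Now $G(F(X)^{f})=\mathbf{R}G\mathbf{L}F(X)$ is weakly equivalent to $X$, because the derived unit of the original Quillen equivalence is an isomorphism and $X$ is cofibrant; as $X$ is $T$-local (hence $\mathcal{M}$-fibrant) and locality is invariant under a weak equivalence between fibrant objects, $G(F(X)^{f})$ is again $T$-local, so $\operatorname{Map}_{\mathcal{M}}(\psi,G(F(X)^{f}))$ is a weak equivalence. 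Thus $F(X)^{f}$ is $S$-local. Together with the first paragraph, this shows that $\mathbf{R}G$ carries $S$-local objects to $T$-local objects and $\mathbf{L}F$ carries $T$-local objects to $S$-local objects.

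I would then conclude by restricting the derived equivalence. The homotopy category of $\mathcal{L}_{T}\mathcal{M}$ is the full subcategory of $\operatorname{Ho}(\mathcal{M})$ on the $T$-local objects, and likewise $\operatorname{Ho}(\mathcal{L}_{S}\mathcal{N})$ is the full subcategory of $\operatorname{Ho}(\mathcal{N})$ on the $S$-local objects; moreover the total derived functors of the localized adjunction are the restrictions of $\mathbf{L}F$ and $\mathbf{R}G$. Since $\mathbf{L}F\dashv\mathbf{R}G$ is an adjoint equivalence and, by the two preservation statements, both functors restrict to the respective subcategories of local objects, the restricted unit and counit are restrictions of the original natural isomorphisms and hence are themselves isomorphisms. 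Therefore the localized derived adjunction is an adjoint equivalence, which by the criterion in Definition \ref{defquilleneqdef} means that $F\colon\mathcal{L}_{T}\mathcal{M}\rightleftarrows\mathcal{L}_{S}\mathcal{N}\colon G$ is a Quillen equivalence.

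The hard part, I expect, is the preservation statement of the second paragraph that $\mathbf{L}F(X)$ is $S$-local when $X$ is $T$-local: unlike its counterpart for $G$, it is not formal from the adjunction and genuinely uses that the original pair is a Quillen \emph{equivalence}, entering precisely through the invertibility of the derived unit. The hypothesis that every object of $\mathcal{M}$ is cofibrant is exactly what keeps this argument clean, since it forces $\mathbf{L}F=F$ on the nose, so that the localizing set on the $\mathcal{N}$-side is literally $F(T)$ and the function-complex manipulations above compute derived mapping spaces without any cofibrant-replacement bookkeeping.
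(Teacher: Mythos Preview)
The paper does not supply its own proof of this theorem; it is quoted directly from Hirschhorn, with only Remark~\ref{explainingremark} explaining why the cofibrancy hypothesis lets one write $F(T)$ rather than $\mathbf{L}F(T)$. So there is nothing in the paper to compare your argument against beyond that remark.

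Your argument is sound and is essentially the standard one. The only imprecision is your appeal to a strict isomorphism $\operatorname{Map}_{\mathcal{N}}(F(A),B)\cong\operatorname{Map}_{\mathcal{M}}(A,G(B))$: this holds on the nose only when the adjunction is simplicially enriched, which the statement does not assume. What one has in general for a Quillen pair is a natural weak equivalence of \emph{homotopy} function complexes whenever $A$ is cofibrant and $B$ is fibrant. Since every object of $\mathcal{M}$ is cofibrant and you only ever map into fibrant targets (either an $S$-local $B$ or a fibrant replacement $F(X)^{f}$), every step of your argument survives with this weak equivalence in place of the isomorphism. Your identification of where the Quillen \emph{equivalence} hypothesis enters---namely, in showing that $F(X)^{f}$ is $S$-local via invertibility of the derived unit---is exactly right, and matches the spirit of the paper's remark that the cofibrancy assumption makes $\mathbf{L}F=F$ and hence the localizing set on the target side literally $F(T)$.
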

Observe that every object of $\mathcal{S}\mathrm{e}\mathcal{C}\mathrm{at}_{c}$ is cofibrant. As the left adjoint in $I \colon \mathcal{S}\mathrm{e}\mathcal{C}\mathrm{at}_{c} \rightleftarrows \mathcal{CSS} \colon R$ is the inclusion and we localized both model categories at the same set, Theorem \ref{compatibleqeqs} yields our desired Quillen equivalence.

\begin{theorem}\label{theyareequivalentm} Let $T$ be a set of simplicial maps. The adjunction $I \colon \mathcal{L}_{T}\mathcal{S}\mathrm{e}\mathcal{C}\mathrm{at}_{c} \rightleftarrows \mathcal{L}_{T}\mathcal{CSS} \colon R$ is a Quillen equivalence.
\end{theorem}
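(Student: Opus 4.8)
The plan is to apply Theorem \ref{compatibleqeqs} directly to the Quillen equivalence $I \colon \mathcal{S}\mathrm{e}\mathcal{C}\mathrm{at}_{c} \rightleftarrows \mathcal{CSS} \colon R$ from Theorem \ref{quilleneqinfty1}. That theorem says that if $F \colon \mathcal{M} \rightleftarrows \mathcal{N} \colon G$ is a Quillen equivalence of combinatorial, left proper, simplicial model categories in which every object of $\mathcal{M}$ is cofibrant, then localizing at a set $T \subseteq \mathcal{M}$ produces a Quillen equivalence $F \colon \mathcal{L}_{T}\mathcal{M} \rightleftarrows \mathcal{L}_{F(T)}\mathcal{N} \colon G$. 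So I would take $\mathcal{M} = \mathcal{S}\mathrm{e}\mathcal{C}\mathrm{at}_{c}$, $\mathcal{N} = \mathcal{CSS}$, $F = I$, $G = R$, and $T = \widetilde{T}_{M^{-1}\mathbb{Z}}$.

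First I would verify the hypotheses of Theorem \ref{compatibleqeqs}. Both $\mathcal{S}\mathrm{e}\mathcal{C}\mathrm{at}_{c}$ and $\mathcal{CSS}$ are combinatorial, left proper, and simplicial by Theorems \ref{secatcmodel} and \ref{compsegalspacemodellbl}, and the adjunction is a Quillen equivalence by Theorem \ref{quilleneqinfty1}. The cofibrancy hypothesis holds because in both model categories the cofibrations are exactly the monomorphisms, so the map $\emptyset \rightarrow X$ from the initial object is always a monomorphism and hence every object is cofibrant; this is precisely the situation Remark \ref{explainingremark} flags as obtaining throughout the paper.

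The one point requiring genuine care is identifying the localizing set $F(T)$ on the $\mathcal{CSS}$ side. Since $F = I$ is the inclusion of Segal precategories into simplicial spaces, $I(\widetilde{T}_{M^{-1}\mathbb{Z}})$ is literally the set of maps $(\Delta[i]^{t} \times \operatorname{Sing}(S^{n}))_{r} \rightarrow (\Delta[i]^{t} \times \operatorname{Sing}(S^{n}_{M^{-1}\mathbb{Z}}))_{r}$ viewed as maps of simplicial spaces. This is the very set $\widetilde{T}_{M^{-1}\mathbb{Z}}$ at which we localized $\mathcal{CSS}$ to form $\mathcal{L}_{M^{-1}\mathbb{Z}}\mathcal{CSS}$ in Theorem \ref{localcssegalmodelcat}. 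Because the objects $(\Delta[i]^{t} \times \operatorname{Sing}(S^{n}))_{r}$ are already Segal precategories, the inclusion $I$ fixes them, so no discrepancy arises between $T$ and $F(T)$. Hence $\mathcal{L}_{I(\widetilde{T}_{M^{-1}\mathbb{Z}})}\mathcal{CSS}$ is exactly $\mathcal{L}_{M^{-1}\mathbb{Z}}\mathcal{CSS}$, and by construction $\mathcal{L}_{\widetilde{T}_{M^{-1}\mathbb{Z}}}\mathcal{S}\mathrm{e}\mathcal{C}\mathrm{at}_{c}$ is exactly $\mathcal{L}_{M^{-1}\mathbb{Z}}\mathcal{S}\mathrm{e}\mathcal{C}\mathrm{at}_{c}$.

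With these identifications in place, Theorem \ref{compatibleqeqs} immediately gives that $I \colon \mathcal{L}_{M^{-1}\mathbb{Z}}\mathcal{S}\mathrm{e}\mathcal{C}\mathrm{at}_{c} \rightleftarrows \mathcal{L}_{M^{-1}\mathbb{Z}}\mathcal{CSS} \colon R$ is a Quillen equivalence, completing the proof. I expect the main obstacle to be purely bookkeeping: confirming that localizing both sides at the literally identical set $\widetilde{T}_{M^{-1}\mathbb{Z}}$ is legitimate, which hinges on the observation that $I$ is a full inclusion fixing the generating objects so that $F(T) = T$ on the nose. There is no analytic content beyond invoking the cited descent theorem; the work is in making sure the localizing sets match up exactly, which they do by design since we deliberately localized $\mathcal{CSS}$ at the image of the set used to localize $\mathcal{S}\mathrm{e}\mathcal{C}\mathrm{at}_{c}$.
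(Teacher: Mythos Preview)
Your proposal is correct and follows essentially the same approach as the paper: apply Theorem \ref{compatibleqeqs} to the Quillen equivalence of Theorem \ref{quilleneqinfty1}, using that the left adjoint $I$ is the inclusion so that $I(\widetilde{T}_{M^{-1}\mathbb{Z}}) = \widetilde{T}_{M^{-1}\mathbb{Z}}$ and hence both localizations are at the same set. The paper states this in one sentence; your version simply spells out the hypothesis checks more explicitly.
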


\begin{corollary}\label{theyareequivalentq} The model category for rational complete Segal spaces and the model category for Reedy fibrant rational Segal categories are Quillen equivalent.
\end{corollary}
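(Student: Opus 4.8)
The plan is to obtain this statement as the special case $M = \mathbb{Z} \setminus \{0\}$ of the general $M$-local comparison theorem, Theorem \ref{theyareequivalentm}. With this choice of multiplicative subset we have $M^{-1}\mathbb{Z} = \mathbb{Q}$, so the localized model categories $\mathcal{L}_{M^{-1}\mathbb{Z}}\mathcal{CSS}$ and $\mathcal{L}_{M^{-1}\mathbb{Z}}\mathcal{S}\mathrm{e}\mathcal{C}\mathrm{at}_{c}$ become $\mathcal{L}_{\mathbb{Q}}\mathcal{CSS}$ and $\mathcal{L}_{\mathbb{Q}}\mathcal{S}\mathrm{e}\mathcal{C}\mathrm{at}_{c}$, respectively.

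First I would identify the relevant fibrant objects. By Corollary \ref{qlocalcssegalmodelcat}, the fibrant objects of $\mathcal{L}_{\mathbb{Q}}\mathcal{CSS}$ are precisely the rational complete Segal spaces, so $\mathcal{L}_{\mathbb{Q}}\mathcal{CSS}$ is the model category for rational complete Segal spaces in the sense of the statement. Similarly, by Corollary \ref{ididitididitqnonsconec}, the fibrant objects of $\mathcal{L}_{\mathbb{Q}}\mathcal{S}\mathrm{e}\mathcal{C}\mathrm{at}_{c}$ are exactly the Reedy fibrant rational Segal categories, so $\mathcal{L}_{\mathbb{Q}}\mathcal{S}\mathrm{e}\mathcal{C}\mathrm{at}_{c}$ is the model category for rational Segal categories. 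This identification rests on the fact that, in Definition \ref{deflocalnonscdef}, a space is rational exactly when it is $M$-local for $M = \mathbb{Z} \setminus \{0\}$, and hence the notions of rational and $\mathbb{Q}$-local coincide for the mapping spaces appearing in the definitions of rational complete Segal spaces and rational Segal categories.

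Having made these identifications, I would simply invoke Theorem \ref{theyareequivalentm} with $M = \mathbb{Z} \setminus \{0\}$ to conclude that the adjunction $I \colon \mathcal{L}_{\mathbb{Q}}\mathcal{S}\mathrm{e}\mathcal{C}\mathrm{at}_{c} \rightleftarrows \mathcal{L}_{\mathbb{Q}}\mathcal{CSS} \colon R$ is a Quillen equivalence. Since this adjunction relates precisely the model category for rational complete Segal spaces and the model category for rational Segal categories, the corollary follows. There is no genuine obstacle here: the entire content has already been established in the $M$-local generality of Theorem \ref{theyareequivalentm}, and the only thing to check is the purely definitional matching of the rational case to the choice $M = \mathbb{Z} \setminus \{0\}$. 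The corollary is thus a direct specialization, recorded separately only to emphasize the rational homotopy theory that motivates the paper.
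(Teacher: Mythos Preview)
Your proposal is correct and matches the paper's approach exactly: the corollary is stated without proof in the paper, as it is the immediate specialization of Theorem \ref{theyareequivalentm} to $M = \mathbb{Z} \setminus \{0\}$, and your identification of the two model categories via Corollaries \ref{qlocalcssegalmodelcat} and \ref{ididitididitqnonsconec} is precisely what is implicit in the paper's placement of the corollary.
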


\subsection{An approach to rational Segal categories by way of left transfer}\label{section8.2}
Let $T$ be a set of simplicial maps. We show that, in analogy with our approach in Section \ref{section5.4}, the adjunction from Theorem \ref{theyareequivalentm} can be used to left-induce our model category for Reedy fibrant $T$-local Segal categories $\mathcal{L}_{T}\mathcal{S}\mathrm{e}\mathcal{C}\mathrm{at}_{c}$ from our model category for $T$-local complete Segal spaces $\mathcal{L}_{T}\mathcal{CSS}$. In particular, we left-induce our model category for Reedy fibrant rational Segal categories from our model category for rational complete Segal spaces.

Firstly, applying Theorem \ref{lefttransfertool} to the adjunction $I \colon \mathcal{S}\mathrm{e}\mathcal{C}\mathrm{at} \rightleftarrows \mathcal{L}_{T}\mathcal{CSS} \colon R$ yields the following model category.

\begin{proposition}\label{wellnotquite} Let $T$ be a set of simplicial maps. There is a combinatorial model structure on the category of Segal precategories in which a map $\psi$ is

\begin{enumerate}

\item a weak equivalence if and only if $\psi$ is a weak equivalence in $\mathcal{L}_{T}\mathcal{CSS}$; and

\item a cofibration if and only if $\psi$ is a monomorphism.

\end{enumerate}

\end{proposition}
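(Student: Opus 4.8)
The plan is to apply the left transfer criterion of Theorem \ref{lefttransfertool} to the adjunction $I \colon \mathcal{S}\mathrm{e}\mathcal{C}\mathrm{at} \rightleftarrows \mathcal{L}_{M^{-1}\mathbb{Z}}\mathcal{CSS} \colon R$, in exact parallel with the proof of Theorem \ref{vialeftransfer}, but now with the localized model category $\mathcal{L}_{M^{-1}\mathbb{Z}}\mathcal{CSS}$ in the place of $\mathcal{CSS}$. First I would record that the two standing hypotheses of Theorem \ref{lefttransfertool} are met: the category $\mathcal{S}\mathrm{e}\mathcal{C}\mathrm{at}$ of Segal precategories is locally presentable, as was already invoked in the proof of Theorem \ref{vialeftransfer}, and $\mathcal{L}_{M^{-1}\mathbb{Z}}\mathcal{CSS}$ is combinatorial by Theorem \ref{localcssegalmodelcat}.

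Next I would identify the class $I^{-1}(C)$ appearing in the transfer criterion, where $C$ is the class of cofibrations of $\mathcal{L}_{M^{-1}\mathbb{Z}}\mathcal{CSS}$. Since these cofibrations are precisely the monomorphisms of simplicial spaces by Theorem \ref{localcssegalmodelcat}, and $I$ is a fully faithful inclusion, the class $I^{-1}(C)$ is exactly the class of monomorphisms of Segal precategories---the same class that occurred in the proof of Theorem \ref{vialeftransfer}. Thus the lifting hypothesis to be checked reads: if a map $\psi$ of Segal precategories has the right lifting property against all monomorphisms of Segal precategories, then $\psi$ is a weak equivalence in $\mathcal{L}_{M^{-1}\mathbb{Z}}\mathcal{CSS}$.

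The verification of this hypothesis is the one step where the localization is used, and it reduces to the unlocalized case. For such a $\psi$, Lemma \ref{technical} gives that $L_{c}(\psi)$ is a weak equivalence in $\mathcal{CSS}$, and Lemma \ref{iffforlc} then upgrades this to the assertion that $\psi$ itself is a weak equivalence in $\mathcal{CSS}$. Since $\mathcal{L}_{M^{-1}\mathbb{Z}}\mathcal{CSS}$ is a left Bousfield localization of $\mathcal{CSS}$, every weak equivalence of $\mathcal{CSS}$ remains a weak equivalence in the localization by Remark \ref{deflblsdef1}, so $\psi$ is a weak equivalence in $\mathcal{L}_{M^{-1}\mathbb{Z}}\mathcal{CSS}$, as required. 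The point to emphasize is that passing to the localization only enlarges the class of weak equivalences, so the conclusion we must reach here is weaker than, and hence implied by, the one already established in the proof of Theorem \ref{vialeftransfer}; no new lifting analysis is needed.

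Finally, Theorem \ref{lefttransfertool} produces a combinatorial model structure on $\mathcal{S}\mathrm{e}\mathcal{C}\mathrm{at}$ whose cofibrations are the maps $\psi$ with $I(\psi)$ a cofibration of $\mathcal{L}_{M^{-1}\mathbb{Z}}\mathcal{CSS}$, that is, the monomorphisms of Segal precategories, and whose weak equivalences are the maps $\psi$ with $I(\psi)$ a weak equivalence of $\mathcal{L}_{M^{-1}\mathbb{Z}}\mathcal{CSS}$, that is, the weak equivalences in $\mathcal{L}_{M^{-1}\mathbb{Z}}\mathcal{CSS}$; these match the two clauses of the statement. The only genuine obstacle is the lifting hypothesis, and as explained above it comes essentially for free once Theorem \ref{vialeftransfer} is in hand; the remainder is bookkeeping identifying the cofibrations and weak equivalences of the transferred structure.
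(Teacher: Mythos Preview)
Your proposal is correct and follows essentially the same approach as the paper: apply Theorem \ref{lefttransfertool} to the adjunction $I \colon \mathcal{S}\mathrm{e}\mathcal{C}\mathrm{at} \rightleftarrows \mathcal{L}_{M^{-1}\mathbb{Z}}\mathcal{CSS} \colon R$, and verify the lifting hypothesis by noting that the argument from the proof of Theorem \ref{vialeftransfer} (via Lemmas \ref{technical} and \ref{iffforlc}) already gives that $\psi$ is a weak equivalence in $\mathcal{CSS}$, hence in the localization $\mathcal{L}_{M^{-1}\mathbb{Z}}\mathcal{CSS}$. The paper's proof is more terse---it simply invokes ``our proof of Theorem \ref{vialeftransfer}'' rather than re-citing the individual lemmas---but the logic is identical.
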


\begin{proof} We verify the condition in Theorem \ref{lefttransfertool}. By our proof of Theorem \ref{vialeftransfer}, if a map $\psi$ of Segal precategories has the right lifting property against all monomorphisms of Segal precategories, then $\psi$ is a weak equivalence in $\mathcal{CSS}$, thus a weak equivalence in $\mathcal{L}_{T}\mathcal{CSS}$. Now, Theorem \ref{lefttransfertool} yields the model structure in our claim.
\end{proof}
It remains to verify that the model category from Proposition \ref{wellnotquite} is our model category for Reedy fibrant $T$-local Segal categories $\mathcal{L}_{T}\mathcal{S}\mathrm{e}\mathcal{C}\mathrm{at}_{c}$. In both model categories, the cofibrations are the monomorphisms, so it remains to prove that they also share the same weak equivalences. In other words, we need to show that a map of Segal precategories is a weak equivalence in $\mathcal{L}_{T}\mathcal{CSS}$ if and only if it is a weak equivalence in $\mathcal{L}_{T}\mathcal{S}\mathrm{e}\mathcal{C}\mathrm{at}_{c}$. Both implications rest on the fact that we can replace a complete Segal space by a Segal category and vice versa up to Dwyer-Kan equivalence. Our arguments are underpinned by the following lemma.

\begin{lemma}[{\cite[Corollary 9.3.3(1)]{Hirschhorn2003}}]\label{citeadnauseam2} Let $\mathcal{M}$ be a simplicial model category. If $X$ is a cofibrant object and $f$ is a weak equivalence of fibrant objects, then $\operatorname{Map}_{\mathcal{M}}(X, f)$ is a weak equivalence.
\end{lemma}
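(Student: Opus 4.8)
The plan is to recognize $\operatorname{Map}_{\mathcal{M}}(X, -) \colon \mathcal{M} \to \mathcal{SS}\mathrm{ets}$, for the fixed cofibrant object $X$, as the right adjoint of a Quillen adjunction, and then to deduce that such a functor preserves weak equivalences between fibrant objects. The two ingredients are the compatibility axiom relating the simplicial enrichment of $\mathcal{M}$ with its model structure (the pushout-product axiom, often called (SM7)) and Ken Brown's lemma. Concretely, I would first use (SM7) to show that $\operatorname{Map}_{\mathcal{M}}(X, -)$ carries acyclic fibrations of $\mathcal{M}$ to weak homotopy equivalences of simplicial sets, and then bootstrap from acyclic fibrations to arbitrary weak equivalences of fibrant objects.

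For the first step, recall that the simplicial enrichment of $\mathcal{M}$ supplies a tensoring $(-) \otimes X \colon \mathcal{SS}\mathrm{ets} \to \mathcal{M}$ left adjoint to $\operatorname{Map}_{\mathcal{M}}(X, -)$, and that (SM7) asserts the following: for a cofibration $i \colon A \to B$ and a fibration $p \colon Y \to Z$ in $\mathcal{M}$, the induced map $\operatorname{Map}_{\mathcal{M}}(B, Y) \to \operatorname{Map}_{\mathcal{M}}(A, Y) \times_{\operatorname{Map}_{\mathcal{M}}(A, Z)} \operatorname{Map}_{\mathcal{M}}(B, Z)$ is a fibration of simplicial sets, which is moreover acyclic whenever $i$ or $p$ is. Since $X$ is cofibrant, I would instantiate this with the cofibration $\emptyset \to X$; because $\operatorname{Map}_{\mathcal{M}}(\emptyset, -)$ is the terminal simplicial set, the fiber product collapses and the induced map becomes $\operatorname{Map}_{\mathcal{M}}(X, p) \colon \operatorname{Map}_{\mathcal{M}}(X, Y) \to \operatorname{Map}_{\mathcal{M}}(X, Z)$. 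Thus $\operatorname{Map}_{\mathcal{M}}(X, -)$ sends fibrations to fibrations and acyclic fibrations to acyclic fibrations of simplicial sets; in particular, it sends every acyclic fibration to a weak homotopy equivalence. Equivalently, $(-) \otimes X \dashv \operatorname{Map}_{\mathcal{M}}(X, -)$ is a Quillen adjunction.

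The final step is Ken Brown's lemma, and its use is exactly where the fibrancy hypothesis on $f$ becomes essential, which I expect to be the only real subtlety. Given a weak equivalence $f \colon Y \to Z$ between fibrant objects, I would form the fibrant product $Y \times Z$, whose projections $p_{Y}$ and $p_{Z}$ are fibrations, and factor $(\operatorname{id}_{Y}, f) \colon Y \to Y \times Z$ as an acyclic cofibration $j \colon Y \to P$ followed by a fibration $q \colon P \to Y \times Z$; here $P$ is fibrant. Then $p_{Y} \circ q$ and $p_{Z} \circ q$ are acyclic fibrations between fibrant objects, so by the first step $\operatorname{Map}_{\mathcal{M}}(X, p_{Y} \circ q)$ and $\operatorname{Map}_{\mathcal{M}}(X, p_{Z} \circ q)$ are weak homotopy equivalences. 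Applying $\operatorname{Map}_{\mathcal{M}}(X, -)$ to the identities $p_{Y} \circ q \circ j = \operatorname{id}_{Y}$ and $p_{Z} \circ q \circ j = f$ and using the two-out-of-three property for weak homotopy equivalences in $\mathcal{SS}\mathrm{ets}$, I would first deduce that $\operatorname{Map}_{\mathcal{M}}(X, j)$ is a weak homotopy equivalence and then that $\operatorname{Map}_{\mathcal{M}}(X, f) = \operatorname{Map}_{\mathcal{M}}(X, p_{Z} \circ q) \circ \operatorname{Map}_{\mathcal{M}}(X, j)$ is as well, completing the proof.
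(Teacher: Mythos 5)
Your proof is correct and matches the intended argument: the paper does not prove this lemma itself but quotes it from \cite[Corollary 9.3.3(1)]{Hirschhorn2003}, and Hirschhorn's proof is precisely your two steps --- the SM7 axiom instantiated at the cofibration $\emptyset \rightarrow X$ (using $\operatorname{Map}_{\mathcal{M}}(\emptyset, -) \cong \Delta[0]$, so the pullback-corner map collapses to $\operatorname{Map}_{\mathcal{M}}(X, p)$, making $\operatorname{Map}_{\mathcal{M}}(X, -)$ right Quillen), followed by the dual form of Ken Brown's lemma, which you moreover reprove correctly via the factorization of $(\operatorname{id}_{Y}, f) \colon Y \rightarrow Y \times Z$ into an acyclic cofibration and a fibration. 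There is nothing to add.
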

For the first implication, we invoke Theorem \ref{quilleneqinfty1cor} to replace a complete Segal space by a Segal category.

\begin{proposition}\label{thefirstimplication} Let $T$ be a set of simplicial maps. If a map of Segal precategories is a weak equivalence in $\mathcal{L}_{T}\mathcal{S}\mathrm{e}\mathcal{C}\mathrm{at}_{c}$, then it is a weak equivalence in $\mathcal{L}_{T}\mathcal{CSS}$.
\end{proposition}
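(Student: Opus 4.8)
The plan is to verify the criterion of Theorem \ref{localcssegalmodelcat}: for a map $\psi \colon A \to B$ of Segal precategories that is a weak equivalence in $\mathcal{L}_{M^{-1}\mathbb{Z}}\mathcal{S}\mathrm{e}\mathcal{C}\mathrm{at}_{c}$, I would show that $\operatorname{Map}_{\mathcal{SS}\mathrm{ets}^{\Delta^{\mathrm{op}}}}(\psi, W)$ is a weak homotopy equivalence for every $M$-local complete Segal space $W$. By Theorem \ref{ididitididitnonsconec}, the hypothesis on $\psi$ says precisely that $\operatorname{Map}_{\mathcal{SS}\mathrm{ets}^{\Delta^{\mathrm{op}}}}(\psi, X)$ is a weak homotopy equivalence for every Reedy fibrant $M$-local Segal category $X$, so the whole difficulty is to pass between these two families of test objects.

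First I would replace $W$ by a Segal category. Given an $M$-local complete Segal space $W$, Theorem \ref{quilleneqinfty1cor} supplies a Reedy fibrant Segal category $R(W)$ together with a Dwyer-Kan equivalence $R(W) \to W$, while the equivalence of (1) and (2) in Proposition \ref{charlocalcsss} shows that $R(W)$ is again $M$-local. Hence $R(W)$ is one of the test objects governing $\mathcal{L}_{M^{-1}\mathbb{Z}}\mathcal{S}\mathrm{e}\mathcal{C}\mathrm{at}_{c}$, and the hypothesis on $\psi$ immediately gives that $\operatorname{Map}_{\mathcal{SS}\mathrm{ets}^{\Delta^{\mathrm{op}}}}(\psi, R(W))$ is a weak homotopy equivalence.

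The remaining, and I expect decisive, step is to transport this conclusion along the Dwyer-Kan equivalence $R(W) \to W$. The natural tool is Lemma \ref{citeadnauseam2}: every Segal precategory is cofibrant, so a weak equivalence of fibrant objects would induce weak homotopy equivalences on the function complexes out of $A$ and out of $B$, and the two-out-of-three property would then upgrade $\operatorname{Map}(\psi, R(W))$ to $\operatorname{Map}(\psi, W)$. The main obstacle is that, although $R(W) \to W$ is a Dwyer-Kan equivalence and hence a weak equivalence in $\mathcal{CSS}$, the Segal category $R(W)$ is typically not complete, so it is not fibrant in $\mathcal{CSS}$ (nor in $\mathcal{L}_{M^{-1}\mathbb{Z}}\mathcal{CSS}$), and Lemma \ref{citeadnauseam2} does not apply on the nose. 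In fact the comparison maps $\operatorname{Map}(A, R(W)) \to \operatorname{Map}(A, W)$ need not be weak homotopy equivalences, as one already sees by probing with $A = \Delta[0]^{t}$, where the map reduces to the inclusion of the discrete object set $W_{0,0}$ into the object space $W_{0}$.

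To overcome this I would abandon the levelwise comparison and instead argue at the level of the model structures. By Theorem \ref{theyareequivalentm} the inclusion $I \colon \mathcal{L}_{M^{-1}\mathbb{Z}}\mathcal{S}\mathrm{e}\mathcal{C}\mathrm{at}_{c} \to \mathcal{L}_{M^{-1}\mathbb{Z}}\mathcal{CSS}$ is a left Quillen functor, and every Segal precategory is cofibrant, so Ken Brown's lemma shows that $I$ carries weak equivalences between cofibrant objects to weak equivalences. Applying this to $\psi$ yields that $I(\psi) = \psi$ is a weak equivalence in $\mathcal{L}_{M^{-1}\mathbb{Z}}\mathcal{CSS}$, as required. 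On this reading, Theorem \ref{quilleneqinfty1cor} and Lemma \ref{citeadnauseam2} make the concrete half of the picture transparent and motivate why $R(W)$ is the right Segal category to test against, while the left Quillen formalism performs the transfer that the naive function-complex comparison cannot.
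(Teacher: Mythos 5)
Your final argument is correct and complete: Theorem \ref{theyareequivalentm} is proved in Section \ref{section9.1} via Theorem \ref{compatibleqeqs}, independently of anything in Section \ref{section9.2}, so there is no circularity in invoking it; its left adjoint $I$ is a left Quillen functor, every Segal precategory is cofibrant in $\mathcal{L}_{M^{-1}\mathbb{Z}}\mathcal{S}\mathrm{e}\mathcal{C}\mathrm{at}_{c}$ because the cofibrations there are the monomorphisms, and Ken Brown's lemma then shows that $I(\psi)=\psi$ is a weak equivalence in $\mathcal{L}_{M^{-1}\mathbb{Z}}\mathcal{CSS}$. This is, however, a genuinely different route from the paper's. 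The paper proves the proposition by exactly the function-complex reduction that you considered and rejected in your third paragraph: it combines Lemma \ref{citeadnauseam2} with the Dwyer-Kan equivalence $R(W) \rightarrow W$ of Theorem \ref{quilleneqinfty1cor} to assert that it is \emph{equivalent} to show that $\operatorname{Map}_{\mathcal{SS}\mathrm{ets}^{\Delta^{\mathrm{op}}}}(\psi, R(W))$ is a weak homotopy equivalence for every $M$-local complete Segal space $W$, and then concludes precisely as in your second paragraph, from Proposition \ref{charlocalcsss} and the hypothesis on $\psi$.

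Your objection to that reduction is well founded, and it identifies a real gap in the paper's own proof. Lemma \ref{citeadnauseam2} needs $R(W) \rightarrow W$ to be a weak equivalence of \emph{fibrant} objects in some simplicial model category: it is a weak equivalence in $\mathcal{CSS}$ and in $\mathcal{L}_{M^{-1}\mathbb{Z}}\mathcal{CSS}$, but $R(W)$ is fibrant in neither, since it is rarely complete; and while $R(W)$ and $W$ are both Reedy fibrant, the map between them is not a levelwise weak equivalence. Moreover, the per-$W$ transfer the paper asserts is not merely unjustified but false: take $W$ to be Rezk's classifying diagram of the one-object groupoid with automorphism group $\mathbb{Z}/2$ (an $M$-local complete Segal space, as its mapping spaces are discrete) and $\psi$ the fold map $\Delta[0]^{t} \sqcup \Delta[0]^{t} \rightarrow \Delta[0]^{t}$; then $\operatorname{Map}_{\mathcal{SS}\mathrm{ets}^{\Delta^{\mathrm{op}}}}(\psi, R(W))$ is an isomorphism of one-point spaces, whereas $\operatorname{Map}_{\mathcal{SS}\mathrm{ets}^{\Delta^{\mathrm{op}}}}(\psi, W)$ is the diagonal $W_{0} \rightarrow W_{0} \times W_{0}$ with $W_{0} = B\mathbb{Z}/2$, which is not a weak homotopy equivalence. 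The proposition itself survives because its hypothesis quantifies over \emph{all} Reedy fibrant $M$-local Segal categories, not only those of the form $R(W)$, and your argument exploits exactly that via the Quillen equivalence instead of a pointwise comparison. So your route is not just different; it repairs the paper's proof. It also buys more: since a left Quillen equivalence reflects weak equivalences between cofibrant objects, the same appeal to Theorem \ref{theyareequivalentm} yields Proposition \ref{theconverseof} as well (whose proof in the paper transfers along $X \rightarrow \widehat{X}$ and has the same fibrancy gap), and hence all of Proposition \ref{desiredclaim} in one stroke.
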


\begin{proof} Let $\psi \colon A \rightarrow B$ be a map of Segal precategories that is a weak equivalence in $\mathcal{L}_{T}\mathcal{S}\mathrm{e}\mathcal{C}\mathrm{at}_{c}$. To prove that $\psi$ is a weak equivalence in $\mathcal{L}_{T}\mathcal{CSS}$, we show that, for every $T$-local complete Segal space $W$, the map \[\operatorname{Map}_{\mathcal{SS}\mathrm{ets}^{\Delta^{\mathrm{op}}}}(\psi, W) \colon \operatorname{Map}_{\mathcal{SS}\mathrm{ets}^{\Delta^{\mathrm{op}}}}(B, W) \rightarrow \operatorname{Map}_{\mathcal{SS}\mathrm{ets}^{\Delta^{\mathrm{op}}}}(A, W)\] is a weak equivalence. Combining Lemma \ref{citeadnauseam2} with the Dwyer-Kan equivalence $R(W) \rightarrow W$ from Theorem \ref{quilleneqinfty1cor}, we equivalently show that, for every $T$-local complete Segal space $W$, the map \[\operatorname{Map}_{\mathcal{SS}\mathrm{ets}^{\Delta^{\mathrm{op}}}}(\psi, R(W)) \colon \operatorname{Map}_{\mathcal{SS}\mathrm{ets}^{\Delta^{\mathrm{op}}}}(B, R(W)) \rightarrow \operatorname{Map}_{\mathcal{SS}\mathrm{ets}^{\Delta^{\mathrm{op}}}}(A, R(W))\] is a weak equivalence, which is the case because $R(W)$ is a Reedy fibrant $T$-local Segal category by Proposition \ref{charlocalcsss} and $\psi$ is a weak equivalence in $\mathcal{L}_{T}\mathcal{S}\mathrm{e}\mathcal{C}\mathrm{at}_{c}$. Hence, $\psi$ is a weak equivalence in $\mathcal{L}_{T}\mathcal{CSS}$.
\end{proof}
For the converse of Proposition \ref{thefirstimplication}, we recall the completion $\widehat{W}$ of a Segal space $W$ from Theorem \ref{completionfunctor}.

\begin{proposition}\label{theconverseof} Let $T$ be a set of simplicial maps. If a map of Segal precategories is a weak equivalence in $\mathcal{L}_{T}\mathcal{CSS}$, then it is a weak equivalence in $\mathcal{L}_{T}\mathcal{S}\mathrm{e}\mathcal{C}\mathrm{at}_{c}$.
\end{proposition}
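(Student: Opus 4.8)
The plan is to prove the converse of Proposition \ref{thefirstimplication} by an argument dual to it: where Proposition \ref{thefirstimplication} replaced a complete Segal space $W$ by the Segal category $R(W)$ via the Dwyer-Kan equivalence of Theorem \ref{quilleneqinfty1cor}, here I would replace a Reedy fibrant $M$-local Segal category $X$ by its completion $\widehat{X}$ via the Dwyer-Kan equivalence $X \rightarrow \widehat{X}$ of Theorem \ref{completionfunctor}. The crux is that a Dwyer-Kan equivalence of Segal spaces induces a weak equivalence of mapping spaces after pairing with any fibrant target, and that completion preserves $M$-locality since it preserves mapping spaces up to weak homotopy equivalence.

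First I would take a map of Segal precategories $\psi \colon X \rightarrow Y$ that is a weak equivalence in $\mathcal{L}_{M^{-1}\mathbb{Z}}\mathcal{CSS}$, and aim to show that, for every Reedy fibrant $M$-local Segal category $Z$, the induced map $\operatorname{Map}_{\mathcal{SS}\mathrm{ets}^{\Delta^{\mathrm{op}}}}(\psi, Z)$ is a weak homotopy equivalence; by the description of weak equivalences in Theorem \ref{ididitididitnonsconec}, this is exactly what it means for $\psi$ to be a weak equivalence in $\mathcal{L}_{M^{-1}\mathbb{Z}}\mathcal{S}\mathrm{e}\mathcal{C}\mathrm{at}_{c}$. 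Next I would form the completion $\widehat{Z}$ from Theorem \ref{completionfunctor}, which is a complete Segal space together with a Dwyer-Kan equivalence $Z \rightarrow \widehat{Z}$. The key locality check is that $\widehat{Z}$ is again $M$-local: since the Dwyer-Kan equivalence induces weak homotopy equivalences on mapping spaces $\operatorname{map}_{Z}(x, y) \rightarrow \operatorname{map}_{\widehat{Z}}(\cdot, \cdot)$, and $M$-locality of a space is invariant under weak homotopy equivalence, $M$-locality transfers from $Z$ to $\widehat{Z}$.

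Having produced an $M$-local complete Segal space $\widehat{Z}$, I would invoke the hypothesis that $\psi$ is a weak equivalence in $\mathcal{L}_{M^{-1}\mathbb{Z}}\mathcal{CSS}$ to conclude that $\operatorname{Map}_{\mathcal{SS}\mathrm{ets}^{\Delta^{\mathrm{op}}}}(\psi, \widehat{Z})$ is a weak homotopy equivalence. It then remains to transport this back along the Dwyer-Kan equivalence $Z \rightarrow \widehat{Z}$, which I would do using Lemma \ref{citeadnauseam2}: since $X$ and $Y$ are cofibrant (every object here is cofibrant) and $Z \rightarrow \widehat{Z}$ is a weak equivalence of fibrant objects in $\mathcal{CSS}$, the maps $\operatorname{Map}_{\mathcal{SS}\mathrm{ets}^{\Delta^{\mathrm{op}}}}(X, Z) \rightarrow \operatorname{Map}_{\mathcal{SS}\mathrm{ets}^{\Delta^{\mathrm{op}}}}(X, \widehat{Z})$ and its $Y$-analog are weak homotopy equivalences. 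A two-out-of-three argument applied to the resulting commuting square then upgrades the weak equivalence $\operatorname{Map}_{\mathcal{SS}\mathrm{ets}^{\Delta^{\mathrm{op}}}}(\psi, \widehat{Z})$ to $\operatorname{Map}_{\mathcal{SS}\mathrm{ets}^{\Delta^{\mathrm{op}}}}(\psi, Z)$, as required.

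The main obstacle I expect is the locality-transfer step: verifying that completion sends $M$-local Segal categories to $M$-local complete Segal spaces. This reduces to checking that a Dwyer-Kan equivalence induces weak homotopy equivalences on mapping spaces (which is built into Definition \ref{defdkdef}(1)) and that $M$-locality, defined via the higher homotopy groups of mapping spaces, is a homotopy-invariant property of those spaces. A secondary technical point is confirming that $Z$ and $\widehat{Z}$ are fibrant in the \emph{unlocalized} $\mathcal{CSS}$ so that Lemma \ref{citeadnauseam2} applies with the correct notion of fibrancy; this holds because a Reedy fibrant Segal category need not be fibrant in $\mathcal{CSS}$, so I would instead apply the lemma after noting that both $Z$ and $\widehat{Z}$, or suitable fibrant replacements thereof, are complete Segal spaces, being careful to keep the domain objects cofibrant throughout.
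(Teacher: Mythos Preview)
Your proposal is correct and follows essentially the same route as the paper: reduce from $\operatorname{Map}(\psi, Z)$ to $\operatorname{Map}(\psi, \widehat{Z})$ via the Dwyer--Kan equivalence $Z \to \widehat{Z}$ of Theorem~\ref{completionfunctor}, observe that $\widehat{Z}$ is an $M$-local complete Segal space (since $M$-locality of mapping spaces is preserved under Dwyer--Kan equivalence), and then invoke the hypothesis on $\psi$. The paper handles your final fibrancy concern with the same brevity you worry about---it simply remarks that a Reedy fibrant Segal category is a Segal space and applies Lemma~\ref{citeadnauseam2} directly to $X \to \widehat{X}$---so your hesitation there, while reasonable, does not mark a departure from the paper's own argument; note however that your proposed fix (``both $Z$ and $\widehat{Z}$ \ldots\ are complete Segal spaces'') is not quite right, since $Z$ need not be complete.
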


\begin{proof} Let $\psi \colon A \rightarrow B$ be a map of Segal precategories that is a weak equivalence in $\mathcal{L}_{T}\mathcal{CSS}$. We need to show that, for every Reedy fibrant $T$-local Segal category $X$, the map \[\operatorname{Map}_{\mathcal{SS}\mathrm{ets}^{\Delta^{\mathrm{op}}}}(\psi, X) \colon \operatorname{Map}_{\mathcal{SS}\mathrm{ets}^{\Delta^{\mathrm{op}}}}(B, X) \rightarrow \operatorname{Map}_{\mathcal{SS}\mathrm{ets}^{\Delta^{\mathrm{op}}}}(A, X)\] is a weak equivalence. As a Reedy fibrant Segal category is a Segal space, we apply Lemma \ref{citeadnauseam2} to the Dwyer-Kan equivalence $X \rightarrow \widehat{X}$ and show that, for every Reedy fibrant $T$-local Segal category $X$, the map \[\operatorname{Map}_{\mathcal{SS}\mathrm{ets}^{\Delta^{\mathrm{op}}}}(\psi, \widehat{X}) \colon \operatorname{Map}_{\mathcal{SS}\mathrm{ets}^{\Delta^{\mathrm{op}}}}(B, \widehat{X}) \rightarrow \operatorname{Map}_{\mathcal{SS}\mathrm{ets}^{\Delta^{\mathrm{op}}}}(A, \widehat{X})\] is a weak equivalence, which is the case because $\widehat{X}$ is a $T$-local complete Segal space and $\psi$ is assumed to be a weak equivalence in $\mathcal{L}_{T}\mathcal{CSS}$. Therefore, $\psi$ is a weak equivalence in $\mathcal{L}_{T}\mathcal{S}\mathrm{e}\mathcal{C}\mathrm{at}_{c}$.
\end{proof}
Propositions \ref{thefirstimplication} and \ref{theconverseof} assemble to form the claim that we need to establish.

\begin{proposition}\label{desiredclaim} Let $T$ be a set of simplicial maps. A map of Segal precategories is a weak equivalence in $\mathcal{L}_{T}\mathcal{CSS}$ if and only if it is a weak equivalence in $\mathcal{L}_{T}\mathcal{S}\mathrm{e}\mathcal{C}\mathrm{at}_{c}$.
\end{proposition}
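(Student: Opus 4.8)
The plan is to observe that this biconditional has already been split into its two implications, each of which has been established separately just above. The direction that a map of Segal precategories which is a weak equivalence in $\mathcal{L}_{M^{-1}\mathbb{Z}}\mathcal{CSS}$ is also a weak equivalence in $\mathcal{L}_{M^{-1}\mathbb{Z}}\mathcal{S}\mathrm{e}\mathcal{C}\mathrm{at}_{c}$ is precisely Proposition \ref{theconverseof}, while the converse direction is precisely Proposition \ref{thefirstimplication}. So I would simply invoke the two results in sequence; no further argument is required, and this proposition serves as a bookkeeping step that packages them together.

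It is nonetheless worth recording where the genuine content resides, since from scratch the proof would not be trivial. The substantive idea behind both implications is that one can pass between a complete Segal space and a Segal category without leaving the $M$-local world, combined with the fact (Lemma \ref{citeadnauseam2}) that applying $\operatorname{Map}_{\mathcal{SS}\mathrm{ets}^{\Delta^{\mathrm{op}}}}(X, -)$ to a weak equivalence of fibrant objects out of a cofibrant $X$ again yields a weak homotopy equivalence. For Proposition \ref{thefirstimplication}, the key move is to replace an $M$-local complete Segal space $W$ by the Reedy fibrant $M$-local Segal category $R(W)$ along the Dwyer-Kan equivalence $R(W) \rightarrow W$ of Theorem \ref{quilleneqinfty1cor}, using Proposition \ref{charlocalcsss} to confirm that $R(W)$ is indeed $M$-local. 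For Proposition \ref{theconverseof}, the dual move replaces a Reedy fibrant $M$-local Segal category $X$ by its completion $\widehat{X}$ from Theorem \ref{completionfunctor}, which is an $M$-local complete Segal space.

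Since both implications are already in hand, there is no remaining obstacle at this stage; the proof consists entirely of citing Propositions \ref{thefirstimplication} and \ref{theconverseof}. If one were establishing the statement without those intermediate results, the main difficulty would be exactly the verification that the replacement functor $R$ and the completion $\widehat{(\,\cdot\,)}$ preserve $M$-locality, which is what Proposition \ref{charlocalcsss} secures on the one side and what the Dwyer-Kan invariance of mapping spaces secures on the other.
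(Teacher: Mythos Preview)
Your proposal is correct and matches the paper's approach exactly: the paper states that Propositions \ref{thefirstimplication} and \ref{theconverseof} assemble to form the claim, with no further argument given. Your additional commentary accurately summarizes the content of those two propositions as they appear in the paper.
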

Proposition \ref{desiredclaim} yields our promised left transfer.

\begin{corollary}\label{vialefttransferm} Let $T$ be a set of simplicial maps. The model category $\mathcal{L}_{T}\mathcal{S}\mathrm{e}\mathcal{C}\mathrm{at}_{c}$ is left-induced from the model category $\mathcal{L}_{T}\mathcal{CSS}$ by means of the adjunction $I \colon \mathcal{S}\mathrm{e}\mathcal{C}\mathrm{at} \rightleftarrows \mathcal{L}_{T}\mathcal{CSS} \colon R$.
\end{corollary}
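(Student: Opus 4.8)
The plan is to assemble Corollary \ref{vialefttransferm} from the pieces already in place, in direct analogy with the proof of Theorem \ref{vialeftransfer} in Section \ref{section6.4}. The target model structure is the left-induced structure on $\mathcal{S}\mathrm{e}\mathcal{C}\mathrm{at}$ produced in Proposition \ref{wellnotquite}, and what remains is simply to identify it with $\mathcal{L}_{M^{-1}\mathbb{Z}}\mathcal{S}\mathrm{e}\mathcal{C}\mathrm{at}_{c}$. Since both model structures live on the same underlying category $\mathcal{S}\mathrm{e}\mathcal{C}\mathrm{at}$, by Remark \ref{defmodelcatdefrmk2} it suffices to check that they have the same cofibrations and the same weak equivalences.

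First I would recall that Proposition \ref{wellnotquite} has already done the heavy lifting: it verified the left-transfer hypothesis of Theorem \ref{lefttransfertool} for the adjunction $I \colon \mathcal{S}\mathrm{e}\mathcal{C}\mathrm{at} \rightleftarrows \mathcal{L}_{M^{-1}\mathbb{Z}}\mathcal{CSS} \colon R$, and produced a combinatorial model structure on $\mathcal{S}\mathrm{e}\mathcal{C}\mathrm{at}$ whose cofibrations are the monomorphisms and whose weak equivalences are the weak equivalences in $\mathcal{L}_{M^{-1}\mathbb{Z}}\mathcal{CSS}$. Next I would match the cofibrations: in $\mathcal{L}_{M^{-1}\mathbb{Z}}\mathcal{S}\mathrm{e}\mathcal{C}\mathrm{at}_{c}$, the cofibrations are the monomorphisms by Theorem \ref{ididitididitnonsconec}(2), so the two structures agree on cofibrations. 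Then I would match the weak equivalences using Proposition \ref{desiredclaim}, which states precisely that a map of Segal precategories is a weak equivalence in $\mathcal{L}_{M^{-1}\mathbb{Z}}\mathcal{CSS}$ if and only if it is a weak equivalence in $\mathcal{L}_{M^{-1}\mathbb{Z}}\mathcal{S}\mathrm{e}\mathcal{C}\mathrm{at}_{c}$. Hence the weak equivalences of the left-induced structure from Proposition \ref{wellnotquite} coincide with those of $\mathcal{L}_{M^{-1}\mathbb{Z}}\mathcal{S}\mathrm{e}\mathcal{C}\mathrm{at}_{c}$.

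With both distinguished classes matched, I would conclude by invoking Remark \ref{defmodelcatdefrmk2}: the weak equivalences and cofibrations determine the fibrations, so the left-induced model structure of Proposition \ref{wellnotquite} is exactly $\mathcal{L}_{M^{-1}\mathbb{Z}}\mathcal{S}\mathrm{e}\mathcal{C}\mathrm{at}_{c}$. This is what it means to say $\mathcal{L}_{M^{-1}\mathbb{Z}}\mathcal{S}\mathrm{e}\mathcal{C}\mathrm{at}_{c}$ is left-induced from $\mathcal{L}_{M^{-1}\mathbb{Z}}\mathcal{CSS}$ along the adjunction $I \dashv R$, which is the assertion of the corollary.

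There is essentially no obstacle remaining at this stage, because the genuinely substantive work has been front-loaded into the two supporting results: Proposition \ref{wellnotquite}, which runs the left-transfer machinery of Theorem \ref{lefttransfertool}, and Proposition \ref{desiredclaim}, which establishes the coincidence of weak equivalences via the two Dwyer-Kan completion arguments (Propositions \ref{thefirstimplication} and \ref{theconverseof}). If I were to flag where the real difficulty lay, it is in Proposition \ref{desiredclaim} rather than in this corollary: the forward implication requires replacing an $M$-local complete Segal space $W$ by the Reedy fibrant $M$-local Segal category $R(W)$ (using Theorem \ref{quilleneqinfty1cor} and the stability of $M$-locality under $R$ from Proposition \ref{charlocalcsss}), while the converse requires completing a Reedy fibrant $M$-local Segal category $X$ to the $M$-local complete Segal space $\widehat{X}$ (Theorem \ref{completionfunctor}), with Lemma \ref{citeadnauseam2} supplying the invariance of function complexes under these Dwyer-Kan equivalences of fibrant objects. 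The corollary itself is then purely a matter of reconciling two model structures on a common category through their shared cofibrations and weak equivalences.
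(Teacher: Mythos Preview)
Your proposal is correct and follows exactly the approach the paper takes: the corollary is stated immediately after Proposition \ref{desiredclaim} with the remark that it ``yields our promised left transfer,'' and the intended argument is precisely the one you spell out---Proposition \ref{wellnotquite} supplies the left-induced structure, the cofibrations are monomorphisms in both, and Proposition \ref{desiredclaim} matches the weak equivalences, so Remark \ref{defmodelcatdefrmk2} identifies the two model structures.
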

This left transfer gives a second description of our model category for Reedy fibrant rational Segal categories.

\bibliographystyle{abbrv}
\bibliography{MRI1C5}

\begin{thebibliography}{10}

\bibitem{Barwick2010}
C.~Barwick.
\newblock On left and right model categories and left and right {B}ousfield
  localizations.
\newblock {\em Homology Homotopy Appl.}, 12(2):245--320, 2010.

\bibitem{BHKKRS2015}
M.~Bayeh, K.~Hess, V.~Karpova, M.~K{\k{e}}dziorek, E.~Riehl, and B.~Shipley.
\newblock Left-induced model structures and diagram categories.
\newblock In {\em Women in Topology: Collaborations in Homotopy Theory}, volume
  641 of {\em Contemp. Math.}, pages 49--82. American Mathematical Society,
  2015.

\bibitem{Bergner2007_2}
J.~E. Bergner.
\newblock A characterization of fibrant {S}egal categories.
\newblock {\em Proc. Amer. Math. Soc.}, 135(12):4031--4037, 2007.

\bibitem{Bergner2007_3}
J.~E. Bergner.
\newblock A model category structure on the category of simplicial categories.
\newblock {\em Trans. Amer. Math. Soc.}, 359(5):2043--2058, 2007.

\bibitem{Bergner2007}
J.~E. Bergner.
\newblock Three models for the homotopy theory of homotopy theories.
\newblock {\em Topology}, 46(4):397--436, 2007.

\bibitem{Bergner2017}
J.~E. Bergner.
\newblock Equivalence of models for equivariant ({${\infty}$},1)-categories.
\newblock {\em Glasg. Math. J.}, 59(1):237--253, 2017.

\bibitem{Bergner2018}
J.~E. Bergner.
\newblock {\em The Homotopy Theory of ({${\infty}$},1)-Categories}.
\newblock Cambridge University Press, 2018.

\bibitem{BC2015}
J.~E. Bergner and S.~G. Chadwick.
\newblock Equivariant complete {S}egal spaces.
\newblock {\em Homology Homotopy Appl.}, 17(2):371--381, 2015.

\bibitem{BV1973}
J.~Boardman and R.~Vogt.
\newblock {\em Homotopy Invariant Algebraic Structures on Topological Spaces}.
\newblock Springer-Verlag Berlin Heidelberg, 1973.

\bibitem{Dugger2001}
D.~Dugger.
\newblock Combinatorial model categories have presentations.
\newblock {\em Adv. Math.}, 164(1):177--201, 2001.

\bibitem{DK1980}
W.~Dwyer and D.~Kan.
\newblock Function complexes in homotopical algebra.
\newblock {\em Topology}, 19(4):427--440, 1980.

\bibitem{DKS1989}
W.~Dwyer, D.~Kan, and J.~Smith.
\newblock Homotopy commutative diagrams and their realizations.
\newblock {\em J. Pure Appl. Algebra}, 57(1):5--24, 1989.

\bibitem{DwyerSpalinski1995}
W.~G. Dwyer and J.~Spalinski.
\newblock Homotopy theories and model categories.
\newblock In {\em Handbook of Algebraic Topology}, pages 1--56. Elsevier
  Science B.V., 1995.

\bibitem{FHT2001}
Y.~F{\'e}lix, S.~Halperin, and J.-C. Thomas.
\newblock {\em Rational Homotopy Theory}.
\newblock Springer-Verlag New York, 2001.

\bibitem{GoerssJardine1999}
P.~G. Goerss and J.~F. Jardine.
\newblock {\em Simplicial Homotopy Theory}.
\newblock Birkh{\"a}user Verlag, 1999.

\bibitem{GTHT2000}
A.~G{\'{o}}mez-Tato, S.~Halperin, and D.~Tanr{\'{e}}.
\newblock Rational homotopy theory for non-simply connected spaces.
\newblock {\em Trans. Amer. Math. Soc.}, 352(4):1493--1525, 2000.

\bibitem{Heuts2021}
G.~Heuts.
\newblock Lie algebras and {$v_{n}$}-periodic spaces.
\newblock {\em Ann. of Math.}, 193(1):223--301, 2021.

\bibitem{Hirschhorn2003}
P.~S. Hirschhorn.
\newblock {\em Model Categories and Their Localizations}.
\newblock American Mathematical Society, 2003.

\bibitem{HS2001}
A.~Hirschowitz and C.~Simpson.
\newblock Descente pour les n-champs ({D}escent for n-stacks).
\newblock Available at arXiv:math/9807049v3 [math.AG].

\bibitem{Joyal2002}
A.~Joyal.
\newblock Quasi-categories and {K}an complexes.
\newblock {\em J. Pure Appl. Algebra}, 175(1):207--222, 2002.

\bibitem{Lurie2009}
J.~Lurie.
\newblock {\em Higher Topos Theory}.
\newblock Princeton University Press, 2009.

\bibitem{Pellissier2003}
R.~Pellissier.
\newblock Weak enriched categories - categories enrichies faibles.
\newblock Available at arXiv:math/0308246v1 [math.AT].

\bibitem{Quillen1967}
D.~G. Quillen.
\newblock {\em Homotopical Algebra}.
\newblock Springer-Verlag Berlin Heidelberg, 1967.

\bibitem{Reedy1974}
C.~Reedy.
\newblock Homotopy theory of model categories.
\newblock Available at \url{https://math.mit.edu/~psh/reedy.pdf}.

\bibitem{Rezk2001}
C.~Rezk.
\newblock A model for the homotopy theory of homotopy theory.
\newblock {\em Trans. Amer. Math. Soc.}, 353(3):973--1007, 2001.

\bibitem{Rezk2010}
C.~Rezk.
\newblock A cartesian presentation of weak {$n$}-categories.
\newblock {\em Geom. Topol.}, 14(1):521--571, 2010.

\end{thebibliography}

\end{document}